\documentclass[preprint,sort&compress,12pt]{elsarticle}
\usepackage[hidelinks, colorlinks=true]{hyperref}
\usepackage{amscd,amssymb,amsmath,amsthm}
\usepackage[all]{xy}
\usepackage{array}
\usepackage{etoolbox}
\usepackage{enumitem}
\usepackage{mathtools}
\usepackage{siunitx}
\usepackage{multirow}
\usepackage{tikz-cd}
\usepackage{bm}
\usepackage{tikz}
\usepackage{tabularx}
\usepackage{makecell}
\usetikzlibrary{matrix,fit}
\usetikzlibrary{positioning}
\usetikzlibrary{shapes.geometric}
\usepackage{caption}
\usepackage[tickmarkheight=0.1cm]{todonotes}

\newtheorem{theorem}{Theorem}[section]

\newtheorem{lemma}[theorem]{Lemma}
\newtheorem{corollary}[theorem]{Corollary}
\newtheorem{proposition}[theorem]{Proposition}
\newtheorem{remark}[theorem]{Remark}

\numberwithin{equation}{section}
\theoremstyle{definition}
\newtheorem{definition}[theorem]{Definition}

\newtheorem*{Outline}{Outline of the paper}

\newcommand{\Z}{\mathbb{Z}}

\newlength\mylen
\usepackage{array} 
\newcolumntype{C}{>{\hfil$}p{\mylen}<{$\hfil}}

\newcommand\dela[1]{}

\usepackage[utf8]{inputenc}

\makeatletter
\def\ps@pprintTitle{%
	\let\@oddhead\@empty
	\let\@evenhead\@empty
	\def\@oddfoot{}%
	\let\@evenfoot\@oddfoot}
\makeatother

\begin{document}
	
	\begin{frontmatter}

		\title{On the Schur multiplier of $p$-groups with abelianization $s$-elementary abelian}
		\author[NISER,HBNI]{Sumana Hatui}
            \ead{sumanahatui@niser.ac.in}
		\author[NISER,HBNI]{Tony Nixon Mavely}
		\ead{tonynixonmavely@gmail.com}
		\author[NISER,HBNI]{Sahanawaj Sabnam}
		\address[NISER]{School of Mathematical Sciences,  National Institute of Science Education and Research Bhubaneswar, 752050, Odisha, India.}
            \address[HBNI]{Homi Bhabha National Institute, Training School Complex, Anushakti Nagar, Mumbai, 400094, India}
		\ead{sahanawaj.sabnam@niser.ac.in}
        \ead{sahanawajsabnam42@gmail.com}
		
		\begin{abstract}
			Let $p$ be an odd prime. We describe a method to compute the Schur multiplier of finite \(p\)-groups \(G\) of nilpotency class \(2\) such that
${G/[G,G] \cong (\mathbb{Z}_{p^{s}})^{k}, s,k \in \mathbb{N}}$,
generalizing a method of Blackburn and Evens, who treated the case \(s=1\).
As an application, we investigate which abelian \(p\)-groups can occur as the Schur multiplier of a non-abelian \(p\)-group, a problem posed in Kourakava’s notebook.
We further introduce the notions of \(s\)-special \(p\)-groups of rank $k$ generalizing the notion of special $p$-groups of rank $k$.
We study the structural properties, compute the Schur multipliers of \(s\)-special \(p\)-groups of rank $1$.

		\end{abstract}
		
		\begin{keyword}
			Schur multiplier  \sep $s$-special $p$-groups
			\MSC[2020]    20D15  \sep 20F18 \sep 20J05 \sep 20J06
		\end{keyword}
		
	\end{frontmatter}
\section{Introduction}
In this article,  we develop an approach to determine the Schur multiplier for an extensive family of finite 
$p$-groups where $p$ is an odd prime. Let $G$ be a finite $p$-group and $G'$ denotes the commutator subgroup of $G$. The Schur multiplier $M(G)$ of a group $G$ is the second cohomology group $H^2(G,\mathbb{C}^\times)$. 
In \cite{BlaEve1979}, Blackburn and Evens determined the Schur multiplier of $p$-groups $G$ of nilpotency class $2$ whose abelianization $G/G'$ is
elementary abelian, and as a notable consequence, computed the Schur
multiplier of extraspecial $p$-groups. 
Motivated by this work, we generalize these results to a broader class of groups $G$ of
nilpotency class $2$ satisfying $G/G' \cong (\mathbb{Z}_{p^s})^{k}$ with $s,k \in \mathbb{N}$. We call a finite abelian \(p\)-group an \emph{\(s\)-elementary abelian \(p\)-group}
if it is isomorphic to \((\mathbb{Z}_{p^{s}})^{k}\); such groups are
precisely the homocyclic \(p\)-groups of exponent \(p^{s}\).
This notion generalizes that of elementary abelian \(p\)-groups.

The following notation will be used throughout the paper. For two abelian groups $A$ and $B$, $A\otimes B$ denotes the tensor product of $A$ and $B$ as $\mathbb Z$-modules and $A\wedge A$ is the group defined by $\frac{A\otimes A}{\langle a\otimes a\mid a \in A \rangle}$. For ${a_1, a_2\in A}$, let $a_1\wedge a_2$ denote the image of $a_1\otimes a_2$ in $A \wedge A$.
For a $p$-group $G$, we define ${V:=G/G'}$ and $W:=G'$.  
Denote ${(v_1,v_2)=[g_1,g_2]}$ for ${v_1=g_1G'}$ and ${v_2=g_2G'}$. Let $X_1$ be the subgroup of ${V \otimes W}$ generated by the elements $$(v_1,v_2,v_3):=v_1 \otimes (v_2,v_3)+v_2 \otimes (v_3,v_1) + v_3 \otimes (v_1,v_2),v_1,v_2,v_3 \in V,$$ and $X_2$ be the subgroup of $V \otimes W$ generated by the elements $v \otimes f(v)$ for $v\in V$, where $f:V \to W$ is defined by  $f(v)=g^{p^s}$ for $v=gG'$. Let $X$ be the subgroup of $V \otimes W$ generated by $X_1$ and $X_2$.
\begin{theorem} \label{maintheorem Schur computation}
    Let $G$ be a $p$-group with nilpotency class $2$, and $G/G'$ be an $s$-elementary abelian $p$-group. Define a homomorphism
\[
\rho : V \wedge V \longrightarrow W
\text{ by }
\rho(v_{1} \wedge v_{2}) = (v_{1}, v_{2})
\quad \text{for all } v_{1}, v_{2} \in V.
\]
The following holds:
\begin{enumerate}
    \item There exists an abelian group $M^*$ with a subgroup $N$ such that 
    \[
    N \cong \frac{V \otimes W}{X}, \qquad
\frac{M^{*}}{N} \cong V \wedge V.
\]
    \item Let $M$ be the subgroup of $M^{*}$ containing $N$ such that $M/N$
corresponds to $\ker \rho$. Then
$M(G) \cong M.$

\end{enumerate}
\end{theorem}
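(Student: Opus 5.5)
Following Blackburn and Evens, I would work with the cohomological description $M(G)\cong H^2(G,\mathbb{C}^\times)\cong H_2(G,\mathbb{Z})$. The main tool is the Lyndon--Hochschild--Serre spectral sequence of the central extension
\[
1\to W\to G\to V\to 1 .
\]
Rather than the full spectral sequence, it is cleaner to use the Hopf-type description. Write $G=F/R$ with $F$ free. Let $S\le F$ be the preimage of $W=G'$, so that $F/S\cong V$. Then
\[
M(G)\cong (R\cap F')/[F,R].
\]

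The plan is to build an intermediate group between $[F,R]$ and $F'$. Set
\[
M^*:=(S\cap F')/[F,S]^{*},
\]
where $[F,S]^*$ is a suitable subgroup generated by $[F,R]$ together with the words that vanish in $G$ once we pass to a class-$2$ quotient. Concretely, I would take the quotient of $F$ by $[F,R]\,\gamma_4(F)$ together with the relevant $p$-power relations, so that the resulting group $\tilde G$ is a central extension of $G$ by $R/[F,R]$ of class at most $3$. The subgroup $N$ will be the image of $[F,S]$.

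Consider the map
\[
V\otimes W\to N,\qquad gG'\otimes [a,b]\mapsto [g,[a,b]] \ \text{(lifted to $F$)}.
\]
This is well defined and bilinear because $N$ is central in the relevant quotient and the commutator calculus is multilinear modulo higher terms. It is surjective, since $S$ is generated modulo $R$ by commutators and elements of $R$.

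Its kernel contains $X_1$ by the Hall--Witt identity, which is exactly the cyclic sum $(v_1,v_2,v_3)$. It also contains $X_2$. Indeed, $g^{p^s}$ lies in $S$ and corresponds to $f(v)$, and $[g,g^{p^s}]=1$, so $v\otimes f(v)\mapsto 1$. Because the exponent $p^s$ of $V$ and $p$ odd make the Hall--Petrescu correction terms $\binom{p^s}{2}$ divisible by $p^s$, the element $g^{p^s}$ modulo $[F,S]$ is linear in $g$ up to terms that already die. This is where oddness of $p$ is used.

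The step I expect to be hardest is the reverse inclusion: that the kernel is exactly $X$. Here I would argue by counting, or by constructing an explicit group. Take the abelian group $(V\otimes W)/X$ and build a class-$3$ (in general) group $E$. As a set, $E$ is generated by lifts of a basis of $V$, with $W$-level commutators and a central layer $(V\otimes W)/X$. Its defining relations are
\[
[x_i,[x_j,x_l]]=x_i\otimes(x_j,x_l), \qquad x_i^{p^s}\ \text{given by } f .
\]
Verifying consistency, for example via a polycyclic presentation or the Hall--Witt and collection identities, shows $E$ is a group. Then $(V\otimes W)/X$ embeds as $[E,E']$, which forces the map above to be injective. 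This consistency check, in particular handling the $p^s$-th power relations against the tensor layer, is the crux. Once it holds, $M^*/N\cong (S\cap F')/[F,S]$ maps onto $F'/S'$-type data. The standard identification $F'/(F'\cap [F,F,F]\cdots)$ modulo $S$ gives $M^*/N\cong V\wedge V$, via $[a,b]\mapsto \bar a\wedge\bar b$. Injectivity follows since $F'/\gamma_3(F)\cong \wedge^2(F/F')$ and $V$ is homocyclic of exponent $p^s$, so the $p^s$-power relators contribute only to $N$.

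Finally, inside $M^*$ the subgroup $(R\cap F')/[F,S]^*$ consists of those elements whose image in $V\wedge V$ maps to $1$ in $W=G'$ under the commutator map. This is precisely the preimage $M$ of $\ker\rho$. It remains to identify $[F,S]^*\cap R$ with $[F,R]$, that is, to check that $N$ injects into $M(G)$. This follows because $N$ is the image of the Ganea-type map $V\otimes W\to M(G)$ composed with the quotient by $X$, and injectivity was established in the previous step. Thus $M(G)\cong M$.
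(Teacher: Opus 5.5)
Your framework is fine once it is made precise, but the decisive step fails as proposed. As written, $[F,S]^*$ (``$[F,R]$ together with the words that vanish in $G$'', plus ``the relevant $p$-power relations'') is not well defined, since the words vanishing in $G$ include all of $R$. The right choice is simply $[F,S]^*=[F,R]$. Then:
\begin{itemize}
\item $M^*=F'/[F,R]$ is abelian, because $\gamma_4(F)=[\gamma_3(F),F]\subseteq[R,F]$;
\item $N=\gamma_3(F)[F,R]/[F,R]$ and $M^*/N\cong V\wedge V$;
\item by Hopf's formula, $(R\cap F')/[F,R]\cong M(G)$ is exactly the preimage of $\ker\rho$.
\end{itemize}
So everything reduces to showing that the kernel of $\lambda\colon V\otimes W\to\gamma_3(F)/(\gamma_3(F)\cap[F,R])$ is exactly $X$. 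Your proof of $X\subseteq\ker\lambda$ is correct and matches Step~1 of the paper. The reverse inclusion needs a lower bound, that is, a central extension of $G$ in which all of $(V\otimes W)/X$ survives.

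The group $E$ you propose for this step cannot exist in general. Suppose $E$ is a central extension of $N_E\cong(V\otimes W)/X$ by $G$ with $\gamma_3(E)=N_E$, and $v\otimes w\mapsto[\tilde v,\tilde w]$ is injective on $(V\otimes W)/X$. Take $u=\sum c_{ij}\,a_jG'\wedge a_iG'\in\ker\rho$.
\begin{itemize}
\item The element $y=\prod[x_j,x_i]^{c_{ij}}$ lies in $N_E$, whose exponent divides $p^s$.
\item $E'$ is abelian, and $[x_j,x_i^{p^s}]=[x_j,x_i]^{p^s}$ because $p$ is odd.
\item Hence $1=y^{p^s}=\prod[x_j,x_i^{p^s}]^{c_{ij}}$, and the right-hand side is the image of $\sigma(u)=\sum c_{ij}\,a_jG'\otimes f(a_iG')$.
\end{itemize}
So such an $E$ forces $\sigma(\ker\rho)\subseteq X$, which is false in general. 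Take the group in the table of Section~\ref{Kourovka} with $[\alpha_1,\alpha_2]=\beta_1$, $[\alpha_2,\alpha_3]=\beta_2$, $[\alpha_3,\alpha_4]=\beta_3$, $[\alpha_1,\alpha_4]=\beta_4$, $\alpha_i^{p^s}=\beta_i$, and $t_1\geq 1$. There $[\alpha_1,\alpha_3]=1$, yet $\alpha_1G'\otimes\beta_3\notin X$. To see this, consider the antisymmetric bilinear form $(\alpha_iG',\beta_j)\mapsto c_{ij}\in\mathbb{Z}_{p^{t_1}}$ with $c_{12}=c_{13}=c_{34}=1$ and $c_{14}=c_{23}=c_{24}=0$. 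It kills $X_1$ and $X_2$ but takes the value $1$ on $\alpha_1G'\otimes\beta_3$. This agrees with the table, where $M(G)$ has exponent $p^{s+t_1}>p^s$. This is exactly the ``$p^s$-th powers versus tensor layer'' consistency you flagged as the crux, and it breaks.

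The missing idea is to enlarge the central layer. Take the abelian extension $M^*$ of $N$ by $V\wedge V$ whose $p^s$-power map is $\sigma(v_1\wedge v_2)=v_1\otimes f(v_2)+X$. This exists by Lemmas~\ref{homomorphism to extension} and~\ref{required homomorphism}, which realize $\mathrm{Ext}\cong\mathrm{Hom}$ through the $p^s$-power map. Then let $[\bar a_i,\bar a_j]$ carry an extra central factor $e_{ij}\in M^*$ with $e_{ij}N\leftrightarrow a_jG'\wedge a_iG'$. The relation $e_{ij}^{p^s}\leftrightarrow a_jG'\otimes a_i^{p^s}$ is precisely what reconciles the power relations with the commutator relations. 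The resulting group $D$ has $\gamma_3(D)=N$ and $M^*\cap D'=M$, which gives $|M(G)|\geq|M|$. Without this enlargement, neither your embedding nor the suggested ``counting'' argument supplies the lower bound.
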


The proof of this result is presented in Section~\ref{section: Schur multiplier of group with s elementary abelian}.
As a further consequence of Theorem \ref{maintheorem Schur computation}, we address a problem posed in the Kourovka notebook \cite[Problem 15.30]{Kour2018}, which is given below.

\medskip
\noindent\textbf{Problem.}
Is it true that every finite abelian \(p\)-group is isomorphic to the Schur
multiplier of some non-abelian finite \(p\)-group?

\medskip
Recently, Rai \cite[Theorem 6]{Rai2023}, proved that every elementary abelian \(p\)-group arises as the Schur multiplier of some
non-abelian finite \(p\)-group. Generalizing this result, we prove the following
theorem in Section \ref{Kourovka}.
\begin{theorem}\label{Application_Kourovka}
    Let $s\in \mathbb{N}$. The following abelian groups occur as the Schur multiplier of some non-abelian $p$-groups.
    \begin{enumerate}

       \item [(i)]$ (\Z_{p^s})^n \times \Z_{p^{m_1}}$, where $m_1,n \in \mathbb{N} \cup \{0\}$ and $ m_1 < s $.
    \item [(ii)] $(\Z_{p^s})^n \times \Z_{p^{m_1}} \times \Z_{p^{m_2}} $, where $m_1,m_2 \in \mathbb{N} \cup \{0\}$, $n \in \mathbb{N} \setminus \{2\}$ and $ m_1,m_2 < s $.
    \item [(iii)]  $(\Z_{p^s})^n \times \Z_{p^{m_1}} \times \Z_{p^{m_2}} \times \ldots \times \Z_{p^{m_r}} $, where $3 \leq r $,  $m_i < s$ for $1 \le i \le r$ and $n \ge \frac{1}{2}(a-1)(a-2)$ for $a=\lceil\frac{3r}{2}+2\rceil$.
    \end{enumerate}
    In particular, the group $\Z_{p^{n_1}} \times \Z_{p^{n_2}} \times \Z_{p^{n_3}}$ is also obtained as the Schur multiplier of some non-abelian $p$-group, for any $n_1,n_2,n_3 \in  \mathbb{N} \cup \{0\}$ .
\end{theorem}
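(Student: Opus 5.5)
We will realise each listed abelian group as $M(G)$ for a suitable class-$2$ group $G$ with $G/G'\cong(\Z_{p^s})^k$, and compute $M(G)$ with Theorem~\ref{maintheorem Schur computation}. It is simplest to take $G=F/R$, where $F$ is the free class-$2$ group of exponent $p^s$ on $k$ generators $x_1,\dots,x_k$. Then $V=(\Z_{p^s})^k$, $F'\cong V\wedge V\cong(\Z_{p^s})^{\binom{k}{2}}$, and $G$ is obtained by imposing relations that kill part of $F'$ or force $x_i^{p^s}$ to be certain commutators. The two ingredients of $M(G)$ are then as follows.
\begin{enumerate}
\item[(a)] The group $N\cong (V\otimes W)/X$. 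In the exponent-$p^s$ case with $W$ elementary enough, $X_2$ is generated by the $x_i\otimes f(x_i)$.
\item[(b)] The group $\ker\rho\subseteq V\wedge V$, which has the shape $(\Z_{p^s})^{a}\times\prod\Z_{p^{m_i}}$ whenever the relations are of the form $p^{m_i}[x_a,x_b]=$ (a combination of other commutators) or $[x_a,x_b]^{p^{m_i}}=1$.
\end{enumerate}
Since $M^*/N\cong V\wedge V$ is a free $\Z_{p^s}$-module, we expect the extension $N\to M\to\ker\rho$ to be understood through generators.

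\textbf{Case (i).} Start from a two-generator-type example. Take $k=2$ and let $G$ be generated by $x,y$ with $[x,y]$ of order $p^{s}$, $x^{p^s}=y^{p^s}=1$. This gives $W\cong\Z_{p^s}$ and $\ker\rho=0$. We compute $N=(V\otimes W)/X$. Here $X_1=0$ because $V$ has rank $2$, so $(v_1,v_2,v_3)$ vanishes by alternation. Hence $N\cong(\Z_{p^s})^2$. Replacing the relation $x^{p^s}=1$ by $x^{p^s}=[x,y]^{p^{m_1}}$ makes $X_2$ nontrivial and cuts one factor down to $\Z_{p^{m_1}}$. To get the extra factors $(\Z_{p^s})^n$, we take direct products with groups whose multiplier is known, or increase $k$ and kill commutators. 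We will use the Künneth-type formula $M(A\times B)=M(A)\times M(B)\times (A^{ab}\otimes B^{ab})$ only when the extra terms stay homocyclic of exponent $p^s$. Alternatively, we stay inside one $G$ and choose $W$ so that $\ker\rho\cong(\Z_{p^s})^n$ while $N$ contributes the $\Z_{p^{m_1}}$.

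\textbf{Cases (ii) and (iii).} Use general $k$ and choose $W=(V\wedge V)/K$. The subgroup $K$ is generated by $\binom{k}{2}-w$ basis wedges, plus relations of the form $p^{m_i}(x_a\wedge x_b)$ that produce the $\Z_{p^{m_i}}$ summands in $\ker\rho=K$. The power map $f$ is chosen trivial so that $X_2=0$. Then $N=(V\otimes W)/X_1$, and $X_1$ is the image of $V\otimes V\wedge V\to V\otimes W$ restricted to the Jacobi-type elements, i.e.\ of $\Lambda^3V$. Therefore $N\cong (\Z_{p^s})^{kw}/\mathrm{im}(\Lambda^3 V)$. This has to be arranged to be homocyclic or to vanish on the small factors. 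Counting ranks, we need $k\cdot w-\text{rank}(\text{im}\,\Lambda^3V)$ to fit. The bound $n\ge\frac12(a-1)(a-2)$ with $a=\lceil 3r/2+2\rceil$ suggests using $k=a-1$ generators: each small factor $\Z_{p^{m_i}}$ needs its own commutator relation, and one needs roughly $3r/2$ generators to place $r$ such relations disjointly enough that $\Lambda^3$ does not interfere. Then $\binom{a-1}{2}$ counts the remaining homocyclic part. The exclusion $n=2$ in (ii) comes from the small-rank edge case in which this count cannot be met.

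\textbf{Main obstacle.} The hard step is controlling the extension $0\to N\to M\to\ker\rho\to0$. A priori $M$ need not split, and the $\Z_{p^{m_i}}$ parts of $\ker\rho$ could lift to elements of larger order. We would check this by exhibiting explicit lifts of the generators of $\ker\rho$ in $M^*$. Their $p^{m_i}$-th powers land in $N$, and we would verify, using the explicit description of $M^*$ from the proof of Theorem~\ref{maintheorem Schur computation}, that they are trivial modulo $X$. Finally, the ``in particular'' statement follows by choosing $s=\max n_i$ and applying (i) or (ii) with $n\in\{0,1,3\}$. The case $n=2$ happens when two of the $n_i$ are maximal and the third is smaller, and we would handle it by a separate direct example with a small group.
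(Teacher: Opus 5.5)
Your proposal has a genuine gap, and its one explicit computation is wrong. In (a) you take $X_2$ to be generated by the $x_i\otimes f(x_i)$. But $f$ is a homomorphism, so $(x_i+x_j)\otimes f(x_i+x_j)-x_i\otimes f(x_i)-x_j\otimes f(x_j)=x_i\otimes f(x_j)+x_j\otimes f(x_i)$ also lies in $X_2$. Your example for (i) has $w=[x,y]$ of order $p^s$, $x^{p^s}=w^{p^{m_1}}$ and $y^{p^s}=1$. There this puts $y\otimes w^{p^{m_1}}$, as well as $x\otimes w^{p^{m_1}}$, into $X$. Hence $N\cong(\Z_{p^{m_1}})^2$, $\ker\rho=0$ and $M(G)\cong(\Z_{p^{m_1}})^2$. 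Corollary~\ref{Schurmultiplier_final} already gives $|M(G)|=p^{2m_1}\neq p^{s+m_1}$. For $m_1=0$ your group is the split metacyclic group $\langle x\rangle\rtimes\langle y\rangle$, whose multiplier is trivial. So the relation does not ``cut one factor down to $\Z_{p^{m_1}}$''. Likewise in (b), a relation $[x_a,x_b]^{p^{m}}=1$ contributes $\Z_{p^{s-m}}$, not $\Z_{p^{m}}$, to $\ker\rho$.

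The missing idea is how to create a small cyclic factor without polluting $N$. A commutator $\beta$ of order $p^t<p^s$ contributes $V\otimes\langle\beta\rangle\cong(\Z_{p^t})^k$ to $V\otimes W$, and these small summands survive in $N$ unless $X$ absorbs them. In (ii) and (iii) you make the power map trivial, so $X_2=0$, and you give no mechanism by which $X_1$ alone would absorb them. For two generators with $[x,y]$ of order $p^t$ one has $X=0$ and $M(G)\cong(\Z_{p^t})^2\times\Z_{p^{s-t}}$.

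The paper does the opposite. In $G^d_{j,k}$ it imposes $\alpha_l^{p^s}=\beta_l=[\alpha_l,\alpha_{l+1}]$ with $o(\beta_l)=p^{t_l}$. Using $X_1$ and $X_2$ together, it shows every $\alpha_rG'\otimes\beta_l$ lies in $X$. So $V\otimes\langle\beta_l\rangle$ dies in $N$, and $\Z_{p^{s-t_l}}$ arises only from $\ker\rho$. The paper then resolves the extension problem you flag as the main obstacle by computing $G\wedge G$ in Rocco's $\nu(G)$. It bounds generator orders from below through the quotients $B_u$ and $(\Z_{p^s})^d$ (Lemma~\ref{SchurG_{j,k}}).

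Beyond this, what you describe for the $(\Z_{p^s})^n$ part is a plan, not a construction:
\begin{itemize}
\item no group is written down for (ii), (iii), or general $n$ in (i);
\item the direct-product route is not shown to reach every $n$;
\item the use of $a-1$ generators is read off from the stated bound rather than derived.
\end{itemize}
The paper obtains the exact ranges from the covering argument of Theorem~\ref{bound M(G_{j,k})} and Corollary~\ref{values of n}, tracking which values $\binom d2-(d-t)$ are attained as $d,j,k$ vary. In the ``in particular'' part, two equal maximal exponents need no separate example: that is (i) with $n=2$.
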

A proof of this result is given in Section \ref{Kourovka}.
\begin{remark}
For distinct primes $p_i$, let $H(p_i)$ denote a group listed in Theorem \ref{Application_Kourovka}. By \cite[Theorem 2.2.10]{Karp1987}, $\prod_{i=1}^{n}H({p_i})$ is also the Schur multiplier of some non-abelian group.

    \end{remark}

A finite \(p\)-group \(G\) is called an
\emph{\(s\)-special \(p\)-group of rank k} if
$G' \cong Z(G)$ is an $s$-elementary abelian $p$-group minimally generated by $k$ elements
and \(G/G'\) is an \(s\)-elementary abelian \(p\)-group. We say that a group is an \emph{$s$-extraspecial} $p$-group if it is an $s$-special $p$-group of rank $1$. These groups genralize the notion of special $p$-groups of rank $k$ and extraspecial $p$-groups. The structure of extraspecial \(p\)-groups was subsequently described in
\cite[Theorem 3.3.4]{Karp1987}, where it was shown that every such group is a central product
of extraspecial \(p\)-groups of order \(p^{3}\). Recall that a group $G$ is said to be a central product of subgroups $G_i$, $i \in \mathcal{I}$, amalgamating $D$ if $G_i$ generate $G$ and for all $i \neq j$ in $\mathcal{I}$, $G_i \cap G_j =D$ and $[G_i,G_j]=1$. We obtain the structure of $s$-extraspecial $p$-groups and compute their Schur multiplier in the following result. This generalizes the result for extraspecial $p$-groups.
\begin{theorem}\label{Centralproductstructure}
Let \(G\) be an \(s\)-extraspecial \(p\)-group, where \(s \in \mathbb{N}\).
Then there exists an integer \(r \geq 1\) such that \(G\) is the central product
of \(r\) many \(s\)-extraspecial \(p\)-subgroups of order \(p^{3s}\), amalgamating
the center \(Z(G)\).
In particular,
$
|G| = p^{(2r+1)s}$.
Moreover, if \(r > 1\), then $M(G) \cong (\mathbb{Z}_{p^{s}})^{\,2r^{2}-r-1}$.

\end{theorem}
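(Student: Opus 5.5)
The plan is to treat the commutator map as a symplectic form over $\mathbb{Z}_{p^s}$. This gives the central product decomposition, and then Theorem~\ref{maintheorem Schur computation} gives the multiplier, because $X$ turns out to be all of $V\otimes W$.

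\emph{Step 1: the symplectic form.} Put $V=G/G'\cong(\mathbb{Z}_{p^s})^k$ and $W=G'=Z(G)\cong\mathbb{Z}_{p^s}=\langle z\rangle$. Since $G$ has class $2$, the commutator map $(v_1,v_2)$ is a well-defined alternating $\mathbb{Z}$-bilinear map $V\times V\to W$. It is nondegenerate: if $(v,V)=0$ for $v=gG'$, then $g\in Z(G)=G'$, so $v=0$.

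Hence $v\mapsto (v,\cdot)$ is an injective map $V\to \mathrm{Hom}(V,W)\cong V$, and so it is an isomorphism. Choose a basis element $e\in V$ of order $p^s$. Since $e\notin pV$ and the map is an isomorphism onto a free $\mathbb{Z}_{p^s}$-module, the functional $(e,\cdot)$ is not divisible by $p$. So there is $f$ with $(e,f)$ a generator of $W$; rescale so that $(e,f)=z$.

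Then $\langle e,f\rangle\cong(\mathbb{Z}_{p^s})^2$ and $V=\langle e,f\rangle\oplus\langle e,f\rangle^{\perp}$, via $v\mapsto v-(v,f)'e+(v,e)'f$, where $(\cdot)'$ denotes the coefficient in $\mathbb{Z}_{p^s}$. The complement is again free, and the form restricted to it is nondegenerate. Induction then gives a symplectic basis $e_1,f_1,\dots,e_r,f_r$, with $k=2r$, $(e_i,f_i)=z$, and all other basic commutators trivial. This step, which diagonalises an alternating form over $\mathbb{Z}_{p^s}$ rather than over a field, is the main point needing care.

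\emph{Step 2: the central product.} Lift the basis to $x_i,y_i\in G$ and set $H_i=\langle x_i,y_i,Z(G)\rangle$.
\begin{enumerate}
\item Since $x_i^{p^s},y_i^{p^s}\in G'=Z(G)$, we get $|H_i|=p^{3s}$.
\item Each $H_i$ is $s$-extraspecial: $H_i'=\langle z\rangle=Z(G)$, and the form restricted to $\langle e_i,f_i\rangle$ is nondegenerate, which gives $Z(H_i)=Z(G)$.
\item For $i\ne j$ we have $[H_i,H_j]=1$ and $H_i\cap H_j=Z(G)$.
\item The $H_i$ generate $G$.
\end{enumerate}
Therefore $G$ is the central product of $H_1,\dots,H_r$ amalgamating $Z(G)$, and $|G|=p^{2rs}\cdot p^s=p^{(2r+1)s}$.

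\emph{Step 3: the Schur multiplier for $r>1$.} Apply Theorem~\ref{maintheorem Schur computation}. Here $V\otimes W\cong(\mathbb{Z}_{p^s})^{2r}$ with basis $e_i\otimes z,\ f_i\otimes z$.

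For any basis vector $u$ that is not $e_j$ or $f_j$, alternation and bilinearity give
\[
(e_j,f_j,u)=e_j\otimes(f_j,u)+f_j\otimes(u,e_j)+u\otimes(e_j,f_j)=u\otimes z .
\]
When $r>1$, every basis vector $u$ lies outside some pair $\{e_j,f_j\}$. So $X_1=V\otimes W$, hence $X=V\otimes W$ and $N=0$. It follows that $M^*\cong V\wedge V$ and $M(G)\cong\ker\rho$.

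Now $V\wedge V\cong(\mathbb{Z}_{p^s})^{\binom{2r}{2}}$ is free over $\mathbb{Z}_{p^s}$. The map $\rho$ is onto $W\cong\mathbb{Z}_{p^s}$, since $\rho(e_1\wedge f_1)=z$. Because $W$ is free, the surjection $\rho$ splits, so $\ker\rho$ is free of rank $\binom{2r}{2}-1=2r^2-r-1$. Thus
\[
M(G)\cong(\mathbb{Z}_{p^s})^{2r^2-r-1}.
\]
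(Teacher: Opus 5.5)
Your proof is correct, but it takes a different route from the paper in both halves.

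\textbf{The decomposition.} The paper works group-theoretically. It asserts ``without loss of generality'' that one can choose $g_1,g_2$ with $[g_1,g_2]$ generating $G'$ and $[g_1,g_i]=[g_2,g_i]=1$ for $i\ge 3$. It then sets $K=C_G(\langle g_1,g_2\rangle)$ and proves $G=G_1K$ by a count showing that every automorphism of $G_1$ trivial on $G_1/Z(G_1)$ is inner. Finally it proves $K'=Z(K)$ via McCoy's theorem: a matrix over $\mathbb{Z}_{p^s}$ with all entries divisible by $p$ has a nontrivial kernel vector, which would give a central element outside $G'$. Only then does it induct. Your duality argument $V\cong\mathrm{Hom}(V,W)$ together with the explicit projection $v\mapsto v-(v,f)'e+(v,e)'f$ is exactly what justifies that unproved ``without loss of generality'' step. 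It also makes the centralizer and McCoy arguments unnecessary, since nondegeneracy of the form on $\langle e,f\rangle^{\perp}$ is immediate.

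\textbf{The multiplier.} The paper does not use its own Theorem~\ref{maintheorem Schur computation} here. It quotes \cite[Theorem A]{HatVerYad2018}, and only afterwards, in Corollary~\ref{unicentral}, runs Corollary~\ref{Schurmultiplier_final} backwards from $|M(G)|$ to deduce $X=V\otimes W$. You go the other way. The identity $(e_j,f_j,u)=u\otimes z$ gives $X_1=V\otimes W$ directly when $r>1$, so $N=0$ and $M(G)\cong\ker\rho$, a corank-one direct summand of the free module $V\wedge V$.

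\textbf{What each route buys.} Your version keeps the theorem self-contained within the paper and turns it into a genuine application of the main theorem. It also yields unicentrality at once via Remark~\ref{kernel=X} and Lemma~\ref{capability 2}. The cost is reliance on standard facts about free $\mathbb{Z}_{p^s}$-modules: direct summands of free modules are free, and surjections onto free modules split. The paper's route stays closer to Karpilovsky's classical argument for $s=1$, but it leaves the symplectic splitting implicit and depends on an external result for $M(G)$.
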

A proof of this result is given in Section~\ref{s-extraspecial structure}.
Furthermore, we show that for \(r>1\), these groups are unicentral (c.f. Definition \ref{definition: capable, epicenter, unicentral}).


\begin{Outline} In Section 2, we recall some definitions and results that will be used in the paper. In Section 3, we present a method for computing the Schur multiplier of $p$-groups $G$ of nilpotency class $2$ with $G/G'$ being $s$-elementary abelian. In Section 4, as an application of this method we show that many abelian groups can be achieved as the Schur multiplier of non-abelian $p$-groups. In Section 5, we obtain the structure and the Schur multiplier of $s$-extraspecial $p$-groups. 
\end{Outline}
\noindent\textbf{Notation.}
The center of a group \(G\) is denoted by \(Z(G)\). The commutator of two elements $a,b \in G$  is the element $[a,b]=a^{-1}b^{-1}ab$.
The commutator subgroup \(G'\) is also written as \(\gamma_{2}(G)\), and more generally
\(\gamma_{i}(G)\) denotes the \(i\)th term of the lower central series of \(G\).
The
minimal number of generators of \(G\) is denoted by \(d(G)\). The order of an element $g$ in $G$ is denoted by $o(g)$. The group of homomorphisms from $A$ to $B$ is denoted by $\text{Hom}(A,B)$.
We say $\mu:B \to A$ is a section of the map $s: A \to B$ if  $s \circ \mu=1_B$. In a presentation of a group $G$, 
we omit all the relations of the form $[x,y]=1$ for the generators $x,y$ of $G$. The cyclic group of order $n$ will be denoted by $\mathbb{Z}_n$.
\section{Preliminaries}
In this section, we state and prove the results that will be used in the subsequent sections.
We recall the following definition from \cite{Doug1951}, which provides a notion of a
basis for a finite abelian group and will be used throughout the paper.
\begin{definition} \label{definition of basis}
    Let \(G\) be a finite abelian group.
A subset \(\{g_{1},\ldots,g_{d}\} \subseteq G\) is said to be \emph{complete} if
every element \(g \in G\) can be written in the form
$g = g_{1}^{a_{1}} \ldots g_{d}^{a_{d}},$
where \(a_{i} \in \mathbb{Z}\) for all \(1 \leq i \leq d\).
The set is said to be \emph{independent} if
$g_{1}^{a_{1}} \ldots g_{d}^{a_{d}} = 1,
~ 0 \leq a_{i} < o(g_{i}),$
implies \(a_{i} = 0\) for all \(1 \leq i \leq d\).
A \emph{basis} of \(G\) is a subset \(\{g_{1},\ldots,g_{d}\}\) that is both
complete and independent.
 
\end{definition}

\begin{remark}
    The above definition of independence of a set with respect to a finite abelian group can differ from the notion of linear independence in an $R$-module. But if $G$ is an $s$-elementary abelian $p$-group viewed as a $\Z_{p^s}$-module, then the definitions coincide. Throughout the paper, when we say that a set is independent (or a basis), we are using the above definition.
\end{remark}

\begin{lemma} (\cite[Main result]{Doug1951})\label{existence of basis for finite abelian group}
    If $G$ is a finite abelian group, then $G$ has a basis.
\end{lemma}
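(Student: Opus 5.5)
The statement is the basis theorem for finite abelian groups: every finite abelian group has a basis in the sense of Definition~\ref{definition of basis}. I will induct on $|G|$, peeling off a cyclic direct factor generated by an element of maximal order. The case $G=1$ is trivial, since the empty set is a basis. By the Chinese remainder theorem, $G$ is the internal direct product of its Sylow subgroups, so it suffices to treat a finite abelian $p$-group. Indeed, if each Sylow subgroup has a basis, the union of these bases is a basis of $G$. Independence holds because an equation $\prod g_i^{a_i}=1$ splits into one equation per Sylow component.

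Now let $G$ be a $p$-group, written additively, and choose $g\in G$ of maximal order $p^e$. The key step is to show that $\langle g\rangle$ has a complement $H$, i.e.\ $G=\langle g\rangle\oplus H$. To get one, take $H$ maximal among subgroups with $H\cap\langle g\rangle=0$, and suppose for contradiction that $\langle g\rangle+H\neq G$.
\begin{itemize}
\item Pick $x\notin \langle g\rangle+H$ of minimal order modulo $\langle g\rangle+H$. Then $px\in\langle g\rangle+H$, say $px=ag+h$.
\item Multiplying by $p^{e-1}$ gives $0=p^{e}x=p^{e-1}ag+p^{e-1}h$. Since $\langle g\rangle\cap H=0$, this forces $p^{e-1}ag=0$, so $p\mid a$; write $a=pb$.
\item Set $y=x-bg$. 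Then $y\notin\langle g\rangle+H$ and $py=h\in H$.
\item By maximality of $H$, the group $H+\langle y\rangle$ meets $\langle g\rangle$ nontrivially: $cg=h'+ky\neq 0$ for some $h'\in H$ and integer $k$.
\item If $p\mid k$, then $ky\in H$, so $cg\in H\cap\langle g\rangle=0$, a contradiction. Hence $p\nmid k$, so $y\in\langle g\rangle+H$ (since $ky$ lies there and $k$ is invertible modulo the order of $y$), again a contradiction.
\end{itemize}
I expect this complement construction to be the only real obstacle. The rest is bookkeeping.

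Given $G=\langle g\rangle\oplus H$ with $|H|<|G|$, induction gives a basis $\{h_1,\dots,h_m\}$ of $H$. The set $\{g,h_1,\dots,h_m\}$ is then complete, because the sum is direct and the $h_i$ span $H$. It is also independent: if $ag+\sum a_ih_i=0$ with $0\le a<o(g)$ and $0\le a_i<o(h_i)$, then directness gives $ag=0$ and $\sum a_ih_i=0$. Hence $a=0$, and the independence of the $h_i$ gives $a_i=0$ for all $i$.
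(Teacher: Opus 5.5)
Your proof is correct. The paper does not prove this lemma itself; it imports it from Douglas (1951). So the comparison is between a citation and your self-contained argument.

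You give the classical proof. You decompose $G$ into its Sylow subgroups. You then show that in a $p$-group a cyclic subgroup $\langle g\rangle$ of maximal order $p^e$ is a direct summand, using a subgroup $H$ maximal with $H\cap\langle g\rangle=0$, and you induct on $|G|$. The details hold up:
\begin{itemize}
\item Minimality of the order of $x$ modulo $\langle g\rangle+H$ forces $px\in\langle g\rangle+H$; otherwise $px$ would have strictly smaller order modulo that subgroup.
\item The equation $p^e x=0$ uses the maximality of $e$.
\item Inverting $k$ at the end is legitimate because $o(y)$ is a power of $p$.
\end{itemize}

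The citation buys the paper brevity. Your argument is self-contained and gives slightly more: in a finite abelian $p$-group, every element of maximal order extends to a basis.

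Your construction also builds a specific basis greedily rather than taking an arbitrary generating set, and that distinction matters. The paper's note after the lemma, that a minimal generating set is always a basis, holds for $s$-elementary abelian groups but fails in general. In $\mathbb{Z}_p\times\mathbb{Z}_{p^2}$, the minimal generating set $\{(1,1),(0,1)\}$ satisfies $p(1,1)+(p^2-p)(0,1)=0$. Both coefficients lie in the allowed ranges and the first is nonzero, so the set is not independent. For non-homocyclic groups, such as $B$ in the proof of Lemma~\ref{homomorphism to extension} or $W$ in the proof of Theorem~\ref{maintheorem Schur computation}, one should take a basis produced by an argument like yours.
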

Note that, a minimal generating set is a basis for a finite abelian group.

    Let $G$  be a group and $A$ be an abelian group. A central extension of $A$ by $G$ is a short exact sequence 
\[
\begin{tikzcd}
  E:& 1 \arrow[r] & A \arrow[r,"\alpha"] & B \arrow[r, "\beta"] & G \ar[r] & 1
\end{tikzcd},
\]
such that $\alpha(A) \subseteq Z(B)$. Two extensions $E$ and $E'$ are congruent if there exists a homomorphism $\gamma:B \to C$ which renders the following diagram commutative
\[
\begin{tikzcd}
  E: & 1 \arrow[r] & A \arrow[d, "1_A"] \arrow[r, "\alpha"] & B \arrow[d, "\gamma"] \arrow[r, "\beta"] & G \arrow[d, "1_G"] \arrow[r] & 1\\
  E': &1 \arrow[r] & A \arrow[r, "\alpha'"] & C \arrow[r, "\beta'"] & G \ar[r] & 1.
\end{tikzcd}
\]
If $B$ is an abelian group, then the extension $E$ is called an abelian extension.
We define Ext$(G,A)$ to be the group of congruence classes of abelian extensions of $A$ by $G$.

\begin{remark} (cf \cite[pg 27]{Karp1987}) \label{Hom and Ext isomorphism for finite abelian groups}
    For any two finite abelian groups $A$ and $B$, $\mathrm{Hom}(A,B) \cong \mathrm{Ext}(A,B)$. 
\end{remark}


\begin{theorem}(\cite[Theorem 3.2.1]{Karp1987})\label{Blackburn general  exact sequence}
    Suppose that $N$ is a normal subgroup of a finite group $G$. If $F$ is a free group of finite rank, $R$ is a normal subgroup of $F$ for which $G=F/R$ and $S$ is a normal subgroup of $F$ for which $SR/R$ corresponds to $N$, then, there is an exact sequence
    \begin{equation}
\begin{tikzcd}
 1 \arrow[r] & \frac{R \cap [F,S]}{[F,R] \cap [F,S]} \arrow[r] & M(G) \arrow[r] & M(\frac{G}{N}) \arrow[r] & \frac{N \cap G'}{[N,G]} \arrow[r] & 1.
\end{tikzcd}
\label{eq:Blackburn general exact sequence}
\end{equation}
\end{theorem}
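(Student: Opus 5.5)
The plan is to derive the whole sequence from Hopf's formula, so that every map is induced by an inclusion of subgroups of $F$ and exactness reduces to the modular law.

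\emph{Setup.} By Hopf's formula, $M(G)\cong (R\cap F')/[F,R]$. Since $N=SR/R$, we have $G/N\cong F/SR$, and Hopf's formula gives $M(G/N)\cong (SR\cap F')/[F,SR]$. The map $M(G)\to M(G/N)$ will be the one induced by the inclusions $R\cap F'\subseteq SR\cap F'$ and $[F,R]\subseteq [F,SR]$. Because $S$ and $R$ are normal, $[F,SR]=[F,S][F,R]$. For the last term I will use
\[
N\cap G'=(SR\cap F'R)/R=(SR\cap F')R/R,
\qquad
[N,G]=[F,S]R/R.
\]
The first equality is the modular law, using $R\subseteq SR$. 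The map $M(G/N)\to (N\cap G')/[N,G]$ sends $x[F,SR]\mapsto xR\,[N,G]$. It is well defined because $[F,SR]\subseteq [F,S]R$, and it is visibly surjective.

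\emph{Kernel at $M(G)$.} The kernel of $M(G)\to M(G/N)$ is $\bigl((R\cap F')\cap [F,S][F,R]\bigr)/[F,R]$. Since $[F,R]\subseteq R\cap F'$, the modular law gives
\[
(R\cap F')\cap [F,S][F,R]=(R\cap F'\cap[F,S])[F,R]=(R\cap[F,S])[F,R],
\]
where the last step uses $[F,S]\subseteq F'$. Hence the kernel is
\[
(R\cap[F,S])[F,R]/[F,R]\;\cong\;(R\cap[F,S])/([F,R]\cap[F,S]).
\]
This identifies the first term of the sequence and gives injectivity of the first map: the first map is the composite of this isomorphism with the inclusion of the kernel.

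\emph{Exactness at $M(G/N)$.} The image of $M(G)$ is $(R\cap F')[F,SR]/[F,SR]=(R\cap F')[F,S]/[F,SR]$. The kernel of the last map consists of the classes of $x\in SR\cap F'$ with $x\in [F,S]R$. Since $[F,S]\subseteq SR\cap F'$, the modular law gives
\[
(SR\cap F')\cap [F,S]R=[F,S]\,(SR\cap F'\cap R)=[F,S](R\cap F').
\]
So the kernel equals the image.

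The only delicate point is applying the modular (Dedekind) law correctly at each step, checking the required containment each time: $[F,R]\subseteq R\cap F'$, $[F,S]\subseteq SR\cap F'$, and $R\subseteq SR$. Everything else is bookkeeping with Hopf's formula, and finiteness of $G$ ensures all the groups involved are finite.
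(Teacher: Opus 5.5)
Your proof is correct and is essentially the standard proof of this result in Karpilovsky's book, which the paper cites without reproducing: it uses Hopf's formula for the presentations $F/R$ and $F/SR$, the identity $[F,SR]=[F,S][F,R]$, and Dedekind's modular law to identify the kernel and cokernel of the map $M(G)\to M(G/N)$ induced by inclusion. Finiteness of $G$ is really used only to identify $M(G)=H^2(G,\mathbb{C}^\times)$ with the Hopf quotient $(R\cap F')/[F,R]$, and that suffices because the statement only asserts that such an exact sequence exists.
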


\begin{lemma}(\cite[Corollary 3.2.3]{Karp1987}) \label{Karpilovsky exact sequence}
    Let $G$ be a finite nilpotent group of class $c \geq 2$ and let $G=F/R$ be a presentation of $G$ as a factor group of a free group of finite rank. Then, there is an exact sequence
    \begin{equation}
\begin{tikzcd}
 1 \arrow[r] & \frac{\gamma_{c+1}(F)}{[F,R] \cap \gamma_{c+1}(F)} \arrow[r] & M(G) \arrow[r] & M(\frac{G}{\gamma_c(G)}) \arrow[r] & \gamma_c(G) \arrow[r] & 1.
\end{tikzcd}
\label{Blackburn exact sequence}
\end{equation}
\end{lemma}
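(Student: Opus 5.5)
The plan is to specialize Theorem~\ref{Blackburn general exact sequence} to $N=\gamma_c(G)$ and $S=\gamma_c(F)$, and then identify each of the four terms in \eqref{eq:Blackburn general exact sequence}. Here $G=F/R$ with $F$ free of finite rank, as in the statement.

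First I check that $S$ satisfies the hypothesis of Theorem~\ref{Blackburn general exact sequence}. Let $\pi:F\to G$ be the natural surjection. The image of a lower central series term under a surjective homomorphism is the corresponding term for the image group, since commutators of images are images of commutators. By induction on $i$ this gives $\pi(\gamma_i(F))=\gamma_i(G)$. Hence $\gamma_c(F)R/R=\gamma_c(G)=N$, and $S=\gamma_c(F)$ is normal (indeed characteristic) in $F$. So the theorem applies and yields
\[
1\to \frac{R\cap[F,\gamma_c(F)]}{[F,R]\cap[F,\gamma_c(F)]}\to M(G)\to M(G/\gamma_c(G))\to \frac{\gamma_c(G)\cap G'}{[\gamma_c(G),G]}\to 1 .
\]

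Next I simplify each term.
\begin{enumerate}
\item By definition $[F,\gamma_c(F)]=\gamma_{c+1}(F)$.
\item Since $G$ has class $c$, we have $\pi(\gamma_{c+1}(F))=\gamma_{c+1}(G)=1$. Therefore $\gamma_{c+1}(F)\subseteq R$, so $R\cap\gamma_{c+1}(F)=\gamma_{c+1}(F)$. The left-hand term thus becomes $\gamma_{c+1}(F)/([F,R]\cap\gamma_{c+1}(F))$.
\item On the right, $c\ge 2$ gives $\gamma_c(G)\subseteq\gamma_2(G)=G'$, so $\gamma_c(G)\cap G'=\gamma_c(G)$.
\item Also $[\gamma_c(G),G]=\gamma_{c+1}(G)=1$, so the right-hand term is just $\gamma_c(G)$.
\end{enumerate}
Substituting these identifications gives exactly the sequence \eqref{Blackburn exact sequence}. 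Finiteness and nilpotency of $G$ are used only to guarantee that $\gamma_{c+1}(G)=1$ and that Theorem~\ref{Blackburn general exact sequence} applies.

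There is no real obstacle here. The only points needing care are the equality $\gamma_c(F)R/R=\gamma_c(G)$ (the image of the lower central series) and the use of $c\ge 2$ to get $\gamma_c(G)\subseteq G'$. If $c=1$, the intersection $N\cap G'$ would be $G'=1$ instead of $\gamma_1(G)$, and the stated right-hand term would fail.
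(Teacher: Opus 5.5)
Your proposal is correct and is essentially the intended argument. The paper gives no proof of its own and only cites Karpilovsky, where this corollary is obtained in the same way: specialize Theorem~\ref{Blackburn general exact sequence} to $N=\gamma_c(G)$, $S=\gamma_c(F)$, then simplify using $[F,\gamma_c(F)]=\gamma_{c+1}(F)\subseteq R$, $\gamma_c(G)\subseteq G'$ (from $c\ge 2$) and $[\gamma_c(G),G]=\gamma_{c+1}(G)=1$, with $\gamma_c(F)R/R=\gamma_c(G)$ justified as you did.
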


\begin{lemma}( \cite[Corollary 3.2.4]{Karp1987})\label{Karpilovsky lambda map}
    Let $G$ be a finite nilpotent group of class $c \geq 2$ and let $G=F/R$ be a presentation of $G$ as a factor group of a free group of finite rank. Then, 
    \begin{enumerate}
        \item [(i)] Upon identifying $G/G'$ with $F/F'R$ and $\gamma_c(G)$ with $\gamma_c(F)R/R$, the map
        \begin{align*}
            G/G' \otimes \gamma_c(G) &\xrightarrow{\lambda} \gamma_{c+1}(F)/([F,R] \cap \gamma_{c+1}(F)) \\
            fR \otimes xR &\mapsto [f,x]([F,R] \cap \gamma_{c+1}(F))
        \end{align*}
        is a surjective homomorphism.
        \item [(ii)] Let $\phi:G/G' \otimes \gamma_c(G) \to M(G)$ be the composition of $\lambda$ with the homomorphism $\gamma_{c+1}(F)/([F,R] \cap \gamma_{c+1}(F)) \to M(G)$ in \eqref{Blackburn exact sequence}. Then the sequence
            \begin{equation}\label{Blackburn exact sequence with X}
    1 \to Ker \lambda \to G/G' \otimes \gamma_c(G) \xrightarrow{\phi}\ M(G) \to M(\frac{G}{\gamma_c(G)}) \to \gamma_c(G) \to 1.
\end{equation}
is exact. In particular, $M(G)$ is an extension of $(G/G' \otimes \gamma_c(G))/Ker \lambda$ by $Ker \ (M(G/\gamma_c(G)) \to \gamma_c(G))$, i.e., the following sequence is exact.
    \end{enumerate}
    \[
\begin{tikzcd}
 1 \arrow[r] & \frac{\gamma_{c+1}(F)}{[F,R] \cap \gamma_{c+1}(F)} \arrow[r] & M(G) \arrow[r] & M(\frac{G}{\gamma_c(G)}) \arrow[r] & \gamma_c(G) \arrow[r] & 1.
\end{tikzcd}
\]
\end{lemma}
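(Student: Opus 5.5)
The plan is to derive both parts from the exact sequence \eqref{Blackburn exact sequence} of Lemma~\ref{Karpilovsky exact sequence}. Part (i) is a commutator computation in the free group $F$. Part (ii) then comes from splicing $\lambda$ into \eqref{Blackburn exact sequence}.

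\textbf{Preliminary containments.} Since $G=F/R$ has class $c$, we have $\gamma_{c+1}(F)\subseteq R$. Hence
\[
\gamma_{c+2}(F)=[F,\gamma_{c+1}(F)]\subseteq [F,R]\cap\gamma_{c+1}(F)=:K.
\]
Set $Q=\gamma_{c+1}(F)/K$. Two facts follow.
\begin{enumerate}
\item[(a)] $Q$ is central in $F/K$, because $[F,\gamma_{c+1}(F)]\subseteq\gamma_{c+2}(F)\subseteq K$.
\item[(b)] $Q$ is abelian, because $[\gamma_{c+1}(F),\gamma_{c+1}(F)]\subseteq\gamma_{2c+2}(F)\subseteq\gamma_{c+2}(F)$.
\end{enumerate}

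\textbf{Bilinearity.} Define $\tilde\lambda\colon F\times\gamma_c(F)\to Q$ by $(f,x)\mapsto[f,x]K$. The identities
\[
[fg,x]=[f,x]^g[g,x],\qquad [f,xy]=[f,y][f,x]^y ,
\]
together with centrality (a), show that $\tilde\lambda$ is bilinear.

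\textbf{Factoring through the quotients.} We must check that $\tilde\lambda$ factors through $F/F'R$ in the first variable and through $\gamma_c(F)R/R\cong\gamma_c(F)/(\gamma_c(F)\cap R)$ in the second.
\begin{itemize}
\item[(1)] If $f\in F'$, then $[f,x]\in[\gamma_2(F),\gamma_c(F)]\subseteq\gamma_{c+2}(F)\subseteq K$.
\item[(2)] If $f\in R$, then $[f,x]\in[R,F]\cap\gamma_{c+1}(F)=K$.
\item[(3)] If $x\in\gamma_c(F)\cap R$, then $[f,x]\in[F,R]\cap\gamma_{c+1}(F)=K$.
\end{itemize}
So $\tilde\lambda$ induces a homomorphism $\lambda$ on $G/G'\otimes\gamma_c(G)$. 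This uses the identifications in the statement, and the fact that every element of $\gamma_c(G)=\gamma_c(F)R/R$ has a preimage in $\gamma_c(F)$.

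\textbf{Surjectivity.} The group $\gamma_{c+1}(F)=[F,\gamma_c(F)]$ is generated by commutators $[f,x]$ with $f\in F$ and $x\in\gamma_c(F)$, together with their conjugates. Conjugates are harmless: by (a), $[f,x]^g\equiv[f,x]$ modulo $K$. So $Q$ is generated by the images $\lambda(fR\otimes xR)$, and $\lambda$ is onto.

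\textbf{Part (ii).} By Lemma~\ref{Karpilovsky exact sequence}, the map $\iota\colon Q\to M(G)$ in \eqref{Blackburn exact sequence} is injective. Therefore
\[
\ker\phi=\ker(\iota\circ\lambda)=\ker\lambda ,
\]
which gives exactness at $G/G'\otimes\gamma_c(G)$. Since $\lambda$ is surjective,
\[
\operatorname{im}\phi=\iota(Q)=\ker\bigl(M(G)\to M(G/\gamma_c(G))\bigr),
\]
which gives exactness at $M(G)$. Exactness at the remaining two terms is exactly that of \eqref{Blackburn exact sequence}.

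The closing "in particular" assertion follows from this and the isomorphism $Q\cong(G/G'\otimes\gamma_c(G))/\ker\lambda$ induced by $\lambda$.

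The main obstacle is the careful verification of well-definedness: choosing lifts in $\gamma_c(F)$ and checking that the three containments (1)–(3) really land in $[F,R]\cap\gamma_{c+1}(F)$, not merely in $[F,R]$. The remaining steps are routine commutator calculus.
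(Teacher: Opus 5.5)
Your proof is correct. The containment $\gamma_{c+2}(F)\subseteq [F,R]\cap\gamma_{c+1}(F)$ makes the quotient $\gamma_{c+1}(F)/([F,R]\cap\gamma_{c+1}(F))$ central, which gives bilinearity. Your containments (1)--(3) give well-definedness, and part (ii) is just splicing the surjection $\lambda$ onto the injective first map of the sequence in Lemma~\ref{Karpilovsky exact sequence}.

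The paper gives no proof of its own; it simply cites Karpilovsky's Corollary 3.2.4, whose derivation from the preceding exact sequence is essentially the argument you wrote. Your remark about conjugates in the surjectivity step is harmless but unnecessary. The set of commutators $[f,x]$ with $f\in F$, $x\in\gamma_c(F)$ is already closed under conjugation, because both subgroups are normal.
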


Let \(Z\) be a central subgroup of a finite group \(G\). The following exact sequence (cf  \cite[Theorem 2.5.6] {Karp1987}) is known as the \emph{Ganea exact sequence}:
\[
\begin{tikzcd}
G/G' \otimes Z \arrow[r,"\phi_Z"] &
M(G) \arrow[r] &
M(G/Z) \arrow[r] &
G' \cap Z \arrow[r] &
1.
\end{tikzcd}
\]
\begin{definition} \label{definition: capable, epicenter, unicentral}
    A group $G$ is said to be capable if $G \cong E/Z(E)$ for some group $E$. The epicenter $Z^*(G)$ of a group $G$ is the smallest central subgroup of $G$ such that $G/Z^*(G)$ is capable. A group $G$ is unicentral if $Z^*(G)=Z(G)$.
\end{definition}
The following Lemma follows from \cite[Lemma 4.2]{Beyl1979} and the above exact sequence.
\begin{lemma}\label{capability 1}
Let \(Z\) be a central subgroup of a finite group \(G\).
Then \(Z \subseteq Z^{*}(G)\) if and only if
$(G/G') \otimes Z = \ker(\phi_Z).$
\end{lemma}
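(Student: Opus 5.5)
The plan is to combine the cited characterization of the epicenter with exactness of the Ganea sequence. The input from \cite[Lemma 4.2]{Beyl1979} I would invoke is the following: for a central subgroup \(Z\) of a finite group \(G\), one has \(Z \subseteq Z^{*}(G)\) if and only if the inflation map \(\inf : M(G) \to M(G/Z)\) is injective. In the Ganea sequence displayed above, this inflation map is exactly the middle arrow \(M(G) \to M(G/Z)\). I will assume this identification; checking it against the normalization of \cite[Theorem 2.5.6]{Karp1987} is essentially bookkeeping.

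Next I use exactness at \(M(G)\) in the Ganea sequence. It gives
\[
\ker\bigl(M(G) \to M(G/Z)\bigr) = \operatorname{im}(\phi_Z).
\]
Hence the inflation map is injective if and only if \(\operatorname{im}(\phi_Z)\) is trivial. That holds if and only if \(\phi_Z\) is the zero homomorphism, i.e.\ \(\ker(\phi_Z) = (G/G') \otimes Z\).

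Chaining the two equivalences gives \(Z \subseteq Z^{*}(G)\) if and only if \((G/G')\otimes Z = \ker(\phi_Z)\), which is the claim.

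The only point needing care is the first step. One must confirm that Beyl's criterion is stated with the same map \(M(G)\to M(G/Z)\) that appears in the Ganea sequence, and not, say, with a map differing by an automorphism. Since injectivity is unaffected by composing with an isomorphism, even a mismatch of that kind would not change the argument. I therefore expect no real obstacle.
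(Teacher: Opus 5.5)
Your proposal is correct and is essentially the paper's argument: the paper also derives the lemma directly from \cite[Lemma 4.2]{Beyl1979} ($Z \subseteq Z^{*}(G)$ if and only if the natural map $M(G) \to M(G/Z)$ is injective), combined with exactness of the Ganea sequence at $M(G)$. One small caveat: in cohomological form, Beyl's criterion says the inflation $M(G/Z) \to M(G)$ is surjective, which is the dual of the homological statement you use, so your chain of equivalences is unaffected.
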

\begin{lemma}\label{capability 2}
Let \(G\) be a finite group of nilpotency class \(2\) and \(Z\) be a subgroup
of \(G\) such that \(Z \subseteq Z(G) \cap G'\).
Then \(Z \subseteq Z^{*}(G)\) if and only if
\[
(G/G') \otimes Z \subseteq \ker(\lambda).
\]
\end{lemma}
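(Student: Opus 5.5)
Both directions of Lemma~\ref{capability 2} come from comparing the map $\phi_Z$ of the Ganea sequence with the map $\lambda$ of Lemma~\ref{Karpilovsky lambda map}, and then applying Lemma~\ref{capability 1}. Write $G=F/R$ with $F$ free of finite rank. Since $G$ has class $2$, we have $\gamma_2(G)=G'$ and $Z\subseteq G'$. By Lemma~\ref{Karpilovsky lambda map} with $c=2$, the map $\phi=\iota\circ\lambda\colon G/G'\otimes G'\to M(G)$ is defined, where $\iota\colon \gamma_3(F)/([F,R]\cap\gamma_3(F))\to M(G)$ is the injection from \eqref{Blackburn exact sequence}. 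Since $\iota$ is injective, $\ker\phi=\ker\lambda$. This reduces the lemma to the following claim:
\begin{quote}
the Ganea map $\phi_Z\colon G/G'\otimes Z\to M(G)$ coincides with the restriction of $\phi$ to $G/G'\otimes Z$, via the map $G/G'\otimes Z\to G/G'\otimes G'$ induced by the inclusion $Z\hookrightarrow G'$.
\end{quote}

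To prove the claim, identify $M(G)=(R\cap F')/[F,R]$ by Hopf's formula. Let $S\trianglelefteq F$ with $SR/R=Z$. The Ganea map is then $fR\otimes xR\mapsto [f,x][F,R]$ for $x\in S$ (cf.\ \cite[Theorem 2.5.6]{Karp1987}). This is well defined because $[F,S]\subseteq R$ (as $Z$ is central) and $[F,S]\subseteq F'$.

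The Blackburn map in Lemma~\ref{Karpilovsky exact sequence} arises from Theorem~\ref{Blackburn general exact sequence} with $N=\gamma_2(G)$ and $S=\gamma_2(F)$. There $\gamma_3(F)=[F,\gamma_2(F)]\subseteq R$ because $G$ has class $2$, and the map sends $\gamma_3(F)$ into $(R\cap F')/[F,R]$ by inclusion. So $\phi(fR\otimes xR)=[f,x][F,R]$ for $x\in F'$.

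Now take $xR\in Z\subseteq G'$. We may choose the representative $x$ in $F'$, since $Z\subseteq G'=F'R/R$. With this choice both formulas give $[f,x][F,R]$. Changing $x$ by an element of $R$ changes the commutator only by an element of $[F,R]$. Hence $\phi_Z=\phi|_{G/G'\otimes Z}$.

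It remains to handle the kernels. Let $j\colon G/G'\otimes Z\to G/G'\otimes G'$ be induced by inclusion. By the claim, $\ker\phi_Z=j^{-1}(\ker\lambda)$. The condition $(G/G')\otimes Z\subseteq\ker\lambda$ in the statement means that the image $j((G/G')\otimes Z)$ lies in $\ker\lambda$, and this is equivalent to $\ker\phi_Z=(G/G')\otimes Z$. Lemma~\ref{capability 1} then gives $Z\subseteq Z^*(G)$ if and only if $(G/G')\otimes Z\subseteq\ker\lambda$.

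The main obstacle is the identification of the two maps. This requires tracking that both connecting maps in \cite{Karp1987} are realized on the Hopf-formula model by the same commutator formula, including consistent sign and ordering conventions. A convention mismatch would only introduce an automorphism (a sign), so it would not affect the kernels. The subtlety that $j$ need not be injective is handled by stating the condition through the image of $j$, as done above.
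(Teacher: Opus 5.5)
Your proposal is correct and follows the same approach as the paper, which proves the lemma in one line: the Ganea map $\phi_Z$ is the restriction of $\lambda$ to $(G/G')\otimes Z$, and then Lemma~\ref{capability 1} applies. Your Hopf-formula check of this identification, together with your handling of the injection $\iota$ and of the possibly non-injective map $j\colon (G/G')\otimes Z\to (G/G')\otimes G'$, makes explicit what the paper leaves implicit.
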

\begin{proof}
    The result follows from Lemma~\ref{Karpilovsky lambda map} and Lemma~\ref{capability 1} as $\phi_Z=\lambda|_{(G/G') \otimes Z}$. 
\end{proof}
    
\section{Schur multiplier of $p$-groups $G$ of nilpotency class $2$ with $G/G'$ being $s$-elementary abelian} 
\label{section: Schur multiplier of group with s elementary abelian}
In this section, we present a method for computing the Schur multiplier of
\(p\)-group $G$ of nilpotency class \(2\) with \(G/G'\) being
\(s\)-elementary abelian.
For such groups, \(G'\) has exponent \(p^{t}\) for some
\(t \leq s\).
The methods given in this section generalize the arguments in
\cite[Section~3.3]{Karp1987}.
\begin{lemma} \label{homomorphism to extension}
    Let $A$ be an $s$-elementary abelian $p$-group and $B$ be an abelian $p$-group with exponent $p^t$ such that $t \leq s$, where $t,s \in \mathbb{N}$. If $\sigma:A \to B$ is a homomorphism, then there exists an abelian extension
    \[
\begin{tikzcd}
  1 \arrow[r] & B \arrow[r] & C \arrow[r, "\phi"] & A \ar[r] & 1
\end{tikzcd}
\]
unique up to congruency such that $\sigma \circ \phi(c)=c^{p^s}$ for all $c \in C$.
\end{lemma}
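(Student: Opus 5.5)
The plan is to build $C$ explicitly from a basis of $A$, check that it has the required $p^s$-power property, and then prove uniqueness by writing down a morphism of extensions out of this "free" model. By Lemma~\ref{existence of basis for finite abelian group}, and since $A$ is $s$-elementary abelian, fix a basis $a_1,\dots,a_k$ of $A$. Every $a_i$ has order $p^s$, and every element of $A$ is uniquely $\prod a_i^{m_i}$ with $0\le m_i<p^s$.

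\emph{Construction.} Set $b_i:=\sigma(a_i)\in B$ and define
\[
C:=(B\times \mathbb{Z}^k)/L,\qquad L=\langle (b_i^{-1},\,p^s e_i)\mid 1\le i\le k\rangle .
\]
Write $x_i$ for the image of $(1,e_i)$, so that $x_i^{p^s}=b_i$ in $C$. Then:
\begin{enumerate}
\item[(a)] The natural map $B\to C$ is injective. An element of $L$ has second coordinate $p^s\sum n_ie_i$, which vanishes only when all $n_i=0$, so $L\cap(B\times 0)=1$.
\item[(b)] The map $\phi:C\to A$ given by $B\mapsto 1$ and $x_i\mapsto a_i$ is well defined, because $\phi(x_i^{p^s})=a_i^{p^s}=1=\phi(b_i)$. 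It is clearly surjective.
\item[(c)] $\ker\phi$ is exactly the image of $B$. Take $c=b\prod x_i^{m_i}\in\ker\phi$. Then $p^s\mid m_i$ for every $i$, and rewriting $x_i^{p^s}=b_i$ puts $c$ in $B$.
\end{enumerate}
So $1\to B\to C\xrightarrow{\phi}A\to 1$ is an abelian extension.

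\emph{The power condition.} For $c=b\prod x_i^{m_i}$ we compute
\[
c^{p^s}=b^{p^s}\prod b_i^{m_i}=\prod\sigma(a_i)^{m_i}=\sigma\Big(\prod a_i^{m_i}\Big)=\sigma(\phi(c)).
\]
This uses commutativity and the fact that $b^{p^s}=1$, which holds because $\exp B=p^t$ with $t\le s$. This is the only place the hypothesis $t\le s$ enters, and it is what makes the $p^s$-power map on $C$ factor through $\phi$.

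\emph{Uniqueness.} Let $1\to B\to C'\xrightarrow{\phi'}A\to1$ be another abelian extension with $c'^{p^s}=\sigma(\phi'(c'))$ for all $c'\in C'$. Choose $c_i'\in C'$ with $\phi'(c_i')=a_i$; then $c_i'^{p^s}=\sigma(a_i)=b_i$. Define $\gamma:B\times\mathbb{Z}^k\to C'$ to be the identity on $B$ and to send $e_i\mapsto c_i'$. It kills $L$, so it descends to $\gamma:C\to C'$, and by construction $\gamma|_B=1_B$ and $\phi'\circ\gamma=\phi$. Hence the two extensions are congruent, and $\gamma$ is an isomorphism by the five lemma (or simply because $|C|=|C'|=|A||B|$ and $\gamma$ is injective on $B$ and induces the identity on $A$).

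The main point needing care is the uniqueness step. One must see that congruence only requires a compatible homomorphism, and that the choice of lifts $c_i'$ does not matter, since any choice yields such a homomorphism. Everything else is routine bookkeeping in the presentation. As a consistency check, this matches Remark~\ref{Hom and Ext isomorphism for finite abelian groups}: $\sigma\in\mathrm{Hom}(A,B)$ corresponds to the class in $\mathrm{Ext}(A,B)$ constructed here.
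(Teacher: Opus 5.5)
Your proof is correct, but it takes a different route from the paper's.

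\textbf{Your route.} You construct the extension explicitly as $C=(B\times\mathbb{Z}^k)/L$. In effect this is the pushout of $0\to p^s\mathbb{Z}^k\to\mathbb{Z}^k\to A\to 0$ along $p^se_i\mapsto\sigma(a_i)$. You then check the $p^s$-power condition by direct computation. Uniqueness comes from the mapping property of this presentation: any abelian extension $C'$ with the same $p^s$-power map receives a morphism of extensions from $C$, and the short five lemma makes it an isomorphism.

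\textbf{The paper's route.} The paper never constructs $C$. It defines $\{E\}\mapsto\sigma_E$ on $\mathrm{Ext}(A,B)$, using $t\le s$ for well-definedness. It shows this map is a homomorphism by a Baer-sum computation with the subgroups $S$ and $T$. It shows injectivity by arguing that $\sigma_E=1$ forces $\exp C\le p^s$, which yields a complement to $B$. Existence is then obtained non-constructively from the equality of orders $|\mathrm{Ext}(A,B)|=|\mathrm{Hom}(A,B)|$.

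\textbf{What each buys.} Your argument is more elementary and self-contained: it needs no Ext/Hom counting and no Baer sum. It also produces explicit lifts $x_i$ with $x_i^{p^s}=\sigma(a_i)$. That is exactly the kind of data the paper later uses when it chooses elements $e_{ij}\in M^*$ with prescribed $p^s$-th powers in the proof of the main theorem. The paper's route gives the extra fact that $\{E\}\mapsto\sigma_E$ is a group isomorphism $\mathrm{Ext}(A,B)\cong\mathrm{Hom}(A,B)$, which the lemma as stated does not require.
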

\begin{proof}
        Let $E$ be an arbitrary abelian extension
\[
\begin{tikzcd}
  E:& 1 \arrow[r] & B \arrow[r,"i"] & C \arrow[r, "\phi"] & A \ar[r] & 1
\end{tikzcd}
\]
of $B$ by $A$. We define a map $\sigma_{E}:A \to B$ by setting $\sigma_E(a)=c^{p^s}$, where $c \in C$ such that $a=\phi(c)$. 
It is easy to see that $\sigma_{E}$ is a well defined homomorphism and depends only on the congruency class of $E$. Therefore, $\{E\} \mapsto \sigma_E$ defines a map from $\mathrm{Ext}(A,B)$ to $\mathrm{Hom}(A,B)$. We have $\mathrm{Ext}(A,B) \cong \mathrm{Hom}(A,B)$ by Remark \ref{Hom and Ext isomorphism for finite abelian groups}. Hence, it is enough to show that the map is an injective homomorphism. We first show that it is a homomorphism. Let
\[
\begin{tikzcd}
  E_i:& 1 \arrow[r] & B \arrow[r] & C_i \arrow[r, "\phi_i"] & A \ar[r] & 1 & (i=1,2)
\end{tikzcd}
\]
be two extensions of $B$ by $A$ and let $\mu_i$ denote a section of $\phi_i$. 
Each element $c_i \in C_i$ can be uniquely written in the form $b\mu_i(a)$ for some $a \in A$ and $b \in B$ and so $c_i=b\mu_i(a)$. Therefore, we can write ${\sigma_{E_i}(a)=\sigma_{E_i}(\phi_i(b\mu_i(a)))=\mu_i(a)^{p^s}}$. Note that $\alpha_i:A \times A \to B$ defined by $\alpha_i(x,y)=\mu_i(x)\mu_i(y)\mu_i(xy)^{-1}$ is a cocycle in $Z^2(A,B)$ corresponding to $E_i$. Let $S$ and $T$ be subgroups of $C_1 \times C_2$ and $B \times B$ respectively defined as follows:
\[
S=\{(c_1,c_2) \in C_1 \times C_2 \mid \phi_1(c_1)=\phi_2(c_2)\},
\]
\[
T=\{(b,b^{-1})\mid b \in B\}.
\]
The map $\frac{(B \times B)}{T} \to B$ defined as $(b_1,b_2)T \mapsto b_1b_2$ is an isomorphism. We consider the following extension:
\[
\begin{tikzcd}
  E_3:& 1 \arrow[r] & \frac{B \times B}{T} \arrow[r] & S/T \arrow[r, "\phi_3"] & A \ar[r] & 1 
\end{tikzcd}
\]
with $\phi_3((c_1,c_2)T)=\phi_i(c_i)$ where $c_i \in C_i$. This sequence is exact and the map $\mu_3:A \to S/T$ defined by $\mu_3(a)=(\mu_1(a),\mu_2(a))T$ is a section of $\phi_3$. Let $\alpha_3$ be the cocycle corresponding to $E_3$ given by $\alpha_3(x,y)=\mu_3(x)\mu_3(y)\mu_3(xy)^{-1}$. 
    Then we have $\alpha_3=\alpha_1\alpha_2$ and $\mu_3(a)^{p^s}=(\mu_1(a),\mu_2(a))^{p^s}T=(\mu_1(a)^{p^s},\mu_2(a)^{p^s})T$ which maps to $ \mu_1(a)^{p^s}\mu_2(a)^{p^s}$ in $B$. This implies that $\sigma_{E_3}(a)=\mu_3(a)^{p^s}=\mu_1(a)^{p^s}\mu_2(a)^{p^s}=\sigma_{E_1}(a)\sigma_{E_2}(a)$. Therefore, the map $\{E\} \mapsto \sigma_E$ is a homomorphism. To show that injectivity holds, assume that $\sigma_E=1$. This implies that $c^{p^s}=1$. Thus, the exponent of $C$ is at most $p^s$. Let $\{a_1,\ldots,a_d\} \subset C$ denote preimages of a minimal generating set of  $A \cong C/B \cong (\mathbb{Z}_{p^s})^d$. If $o(a_i) < p^s$, then $a_iB$ cannot have order $p^s$. Therefore, the subgroup $\langle a_1,\ldots,a_d \rangle$ of $C$ is isomorphic to $A$. Consider the set $\{b_1,\ldots,b_k\}$ to be the images in $C$ of a minimal generating set of $B$. We have the subgroup $\langle b_1,\ldots,b_k \rangle$ of $C$ is isomorphic to $ B$. Note that the sets $\{ a_1,\ldots,a_d \}$, $\{b_1,\ldots,b_k\}$ are bases for the corresponding subgroups of $C$. We will show that the intersection of these two subgroups of $C$ is trivial. Suppose not, then we have $b_1^{m_1}\ldots b_k^{m_k}=a_1^{n_1}\ldots a_d^{n_d}$ where $0 \leq m_i < p^{t_i}$ for $1 \leq i\leq k$ and $0 \leq n_j < p^s$ for $1 \leq j \leq d$. This gives us $1=\phi(a_1)^{n_1}\ldots \phi(a_d)^{n_d}$ which implies that $n_j \equiv 0 \pmod {p^s}$ for all $j$ with $1 \leq j \leq d$. Thus, $b_1^{m_1}\ldots b_k^{m_k}=1$ which gives $m_i \equiv 0 \pmod {p^{t_i}}$ for all $i$ with $1\leq i \leq k$. Thus, the sequence $E$ is a split exact sequence and the map is injective. 
 \end{proof}

 \begin{lemma} \label{required homomorphism}
     Let $G$ be a $p$-group of nilpotency class $2$ with $G/G'$ an $s$-elementary abelian $p$-group, and let $f:V \to W$ be the map defined by ${f(gG')=g^{p^s}}$.
     \begin{enumerate}
         \item[(i)] The map $f$ is a group homomorphism.
         \item[(ii)] The map $\sigma: V \wedge V \to \frac{V \otimes W}{X}$ given by $\sigma(v_1 \wedge v_2)=(v_1 \otimes f(v_2))+X$ is a homomorphism.
     \end{enumerate}
 \end{lemma}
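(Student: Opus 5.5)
For (i), the first thing to check is that $f$ is well defined. If $g'=gc$ with $c\in G'$, then since $G$ has class $2$ we get $(gc)^{p^s}=g^{p^s}c^{p^s}$. Next I claim $G'$ has exponent dividing $p^s$, which gives $f(gG')=g^{p^s}$ unambiguously. Indeed, $G'$ is generated by the commutators $[x,y]$, and bilinearity in class $2$ gives $[x,y]^{p^s}=[x^{p^s},y]$. Because $V$ has exponent $p^s$, $x^{p^s}\in G'\subseteq Z(G)$, so $[x^{p^s},y]=1$. The same computation shows $g^{p^s}\in G'=W$, so $f$ does land in $W$.

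For the homomorphism property I will use the class-$2$ identity
\[
(gh)^{n}=g^{n}h^{n}[h,g]^{\binom{n}{2}}.
\]
With $n=p^s$ and $p$ odd, $p^s$ divides $\binom{p^s}{2}=p^s(p^s-1)/2$, so the correction term dies because $\exp(G')\mid p^s$. Hence $f(v_1v_2)=f(v_1)f(v_2)$. This is where oddness of $p$ is needed.

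For (ii), I define $\tilde\sigma:V\times V\to (V\otimes W)/X$ by $(v_1,v_2)\mapsto v_1\otimes f(v_2)+X$. It is bilinear since $f$ is a homomorphism, so it factors through $V\otimes V$. To pass to $V\wedge V$ I must show that $v\otimes f(v)\in X$ for all $v$; this is exactly a generator of $X_2\subseteq X$. Hence $\sigma$ is well defined on $V\wedge V=(V\otimes V)/\langle v\otimes v\rangle$ and is a homomorphism.

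I expect (ii) to be routine. The main point to handle carefully is in (i): the exponent bound on $G'$ and the vanishing of the binomial commutator term.
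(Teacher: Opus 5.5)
Your proof is correct and follows the paper's route. For (i) you use the class-$2$ power identity (the paper cites Hall--Petrescu), with $p$ odd killing the $\binom{p^s}{2}$ commutator term; for (ii) you use bilinearity together with $v\otimes f(v)\in X_2$. You also check explicitly that $f$ is well defined and that $\exp(G')\mid p^s$, which the paper leaves implicit (it only asserts the exponent bound at the start of the section).
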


 \begin{proof}
     \begin{enumerate}
         \item[(i)] Let $v_1=g_1G'$ and $v_2=g_2G'$ and $v_1 v_2=g_1g_2G'$. Since ${\gamma_3(G)=1}$, we have $(g_1g_2)^{p^s}=g_1^{p^s}g_2^{p^s}c^{\binom{p^s}{2}}$ using the Hall-Petrescu identity, where $c \in \gamma_2(G)$. Since $p$ is odd, $p^s$ divides $\binom{p^s}{2}$ and therefore, $c^{\binom{p^s}{2}}=1$. Thus, $(g_1g_2)^{p^s}=g_1^{p^s}g_2^{p^s}$ which gives $f(v_1v_2)=f(v_1)f(v_2)$.
         \item[(ii)] Let $\lambda: V \times V \to \frac{V \otimes W}{X}$ be defined by $\lambda(v_1,v_2)=\sigma(v_1 \wedge v_2)$. Note that $\lambda(v,v)=(v \otimes f(v))+X=X$ since $v \otimes f(v) \in X_2 \subseteq X$. Therefore, it suffices to verify that $\lambda$ is bilinear. By $(i)$, it follows that $\lambda$ is bilinear.
     \end{enumerate}
 \end{proof}
\textbf{Proof of Theorem \ref{maintheorem Schur computation}.}
\begin{enumerate}
    \item 
Note that if $V$ is an $s$-elementary abelian $p$-group, then ${V \wedge V}$
is also an $s$-elementary abelian $p$-group. Using
Lemma~\ref{required homomorphism}(ii) and
Lemma~\ref{homomorphism to extension}, there exists an abelian group
$M^{*}$ with a subgroup $N$ such that
\[
N \cong \frac{V \otimes W}{X}, \qquad
\frac{M^{*}}{N} \cong V \wedge V.
\]
Moreover, if $xN$ (for $x \in M^{*}$) corresponds to $u \in V \wedge V$,
then $x^{p^{s}}$ corresponds to $\sigma(u)$.

\item We split the proof into two steps:
    
\noindent    \textit{Step 1.} We show that $|M(G)| \leq |M|$.
    Let $G \cong F/R$ be a presentation of $G$, where $F$ is a free group of
finite rank and $R$ is a normal subgroup of $F$. Using Lemma \ref{Karpilovsky exact sequence}, the following sequence is exact: 
\[
\begin{tikzcd}
 1 \arrow[r] & \frac{\gamma_3(F)}{[F,R] \cap \gamma_3(F)} \arrow[r] & M(G) \arrow[r] & M(G/G') \arrow[r] & G' \arrow[r] & 1.
\end{tikzcd}
\]
Hence $|M(G)|=\frac{|M(G/G')|}{|G'|}|\frac{\gamma_3(F)}{[F,R] \cap \gamma_3(F)}|$.
By Corollary~\ref{Karpilovsky lambda map}, there is a
well-defined homomorphism
$\lambda : V \otimes W \to
\gamma_{3}(F)/\bigl([F,R] \cap \gamma_{3}(F)\bigr)$
given by 
\[
\lambda(v \otimes w) =
[f,f']\bigl([F,R] \cap \gamma_{3}(F)\bigr),
\]
where $f \in F$ and $f' \in F'$ are preimages of $v$ and $w$, respectively,
under the natural isomorphism $F/R \cong G$.
Given this homomorphism, it follows that $\lambda(v_1 \otimes (v_2,v_3))=[f_1,[f_2,f_3]]([F,R] \cap \gamma_3(F))$.
Using the Hall-Witt identity \cite[Lemma 3.1.7]{Karp1987}, we have $$[f_1,[f_2,f_3]][f_2,[f_3,f_1]][f_3,[f_1,f_2]] \in \gamma_4(F) \subseteq [F,R] \cap \gamma_3(F).$$
Therefore, $X_1 \subseteq Ker \ \lambda$. If $v=gG'$ and $f \in F$ correspond to $g$, then $\lambda(v \otimes f(v))=[f,f']([F,R] \cap \gamma_3(F))$, where $f' \in F'$ and $f^{-p^s}f' \in R$. Therefore $[f,f']=[f,f^{-p^s}f'] \in [F,R] \cap \gamma_3(F)$,
proving that ${X_2 \subseteq Ker \ \lambda}$. Since $\lambda$ is surjective, it follows that
\[
\bigl|\gamma_{3}(F)/([F,R] \cap \gamma_{3}(F))\bigr|
\le \left|\frac{V \otimes W}{X}\right|.
\]
Using the fact that $M(G/G') \cong V \wedge V$, we obtain
\[
|M(G)|
\le \frac{|V \wedge V|}{|W|}
\left|\frac{V \otimes W}{X}\right|
= \left|\frac{M}{N}\right|\,|N|
= |M|.
\]

\textit{Step 2.} We proceed by constructing a group $D$ with $M^{*} \le Z(D)$ such that
\[
D/M^{*} \cong G \quad \text{and} \quad M^{*} \cap D' = M.
\]
After establishing this construction, Theorem~\ref{Blackburn general exact sequence}
provides a surjective homomorphism from $M(G)$ onto $M$. By Step~1,
we have $|M(G)| \le |M|$, and hence this homomorphism is an isomorphism.

Let $\{a_1G',\ldots,a_dG'\}$ and $\{b_1,\ldots,b_k\}$ be minimal generating sets of $V$ and $W$ respectively. Then $G$ has defining relations of the form
\begin{align*}
    [a_i,a_j]&=\prod \limits_{n=1}^k b_n^{\gamma_{ij}^n} & (i,j=1,2,\ldots,d), \\
    a^{p^s}_i&=\prod \limits_{n=1}^k b_n^{\alpha_{in}} & (i=1,2,\ldots,d),\\
    [b_i,a_l]&=[b_i,b_j]=b_i^{p^{t_i}}=1 & \text{ where } t_i \leq s \  \forall \ i=1,2,\ldots,k; \\
    & & l= 1,2, \ldots, d.
\end{align*}
For $1 \le i \le k$ and $1 \le j \le d$, choose elements $d_{ij} \in N$
corresponding to $a_jG' \otimes b_i + X$. Further, for $1 \le i,j \le d$,
let $e_{ij} \in M^{*}$ be elements such that $e_{ij}N$ corresponds to
$a_jG' \wedge a_iG'$, with $e_{ij}e_{ji} = e_{ii} = 1$.
By construction, $e_{ij}^{p^{s}} \in N$, and it corresponds to
\[
\sigma(a_jG' \wedge a_iG')
= a_jG' \otimes a_i^{p^{s}} + X.
\]
It follows from the definitions of $X, \ d_{ij}$, and $e_{ij}$ that
$$a_jG' \otimes \prod \limits _n b_n^{\gamma_{il}^n}-a_iG' \otimes \prod \limits _nb_n^{\gamma_{jl}^n}-a_lG' \otimes \prod \limits _n b_n^{\gamma_{ij}^n} \in X
\text{ and  }e^{p^s}_{il}=\prod \limits _n d_{nl}^{\alpha_{in}}.$$
Thus, we have an extension $D$ of $M^*$ by $G$ in which $M^* \subseteq Z(D)$, $\overline{a_i}M^*$,$\overline{b_j}M^*$ correspond to $a_i$,$b_j$ respectively and 
\begin{align*}
    [\overline{a_i},\overline{a_j}]&=\prod \limits_{n=1}^k \overline{b_n}^{\gamma_{ij}^n}e_{ij} & (i,j=1,2,\ldots,d), \\
    \overline{a_i}^{p^s}&=\prod \limits_{n=1}^k \overline{b_n}^{\alpha_{in}} & (i=1,2,\ldots,d), \\
    [\overline{b_i},\overline{a_j}]&=d_{ij} & (1 \leq i \leq k, 1 \leq j \leq d),\\
[\overline{b_i},\overline{b_j}]&=\overline{b_i}^{p^{t_i}}=1& (1 \leq i,j \leq k).
\end{align*}
It remains to verify that $M^{*} \cap D' = M$. Observe that $M^{*}$ is
generated by the elements $e_{ij}$ and $d_{ij}$, whereas $D'$ is generated
by the elements $d_{ij}$ together with all the elements of the form
\[
\prod_{n=1}^{k} \overline{b_n}^{\,\gamma_{ij}^{n}} e_{ij}.
\]
Consequently, an element $f \in M^{*}$ lies in $D'$ if and only if it can be
expressed as
\[
f = \prod e_{ij}^{\lambda_{ij}} \prod d_{ij}^{\mu_{ij}},
\]
where the integers $\lambda_{ij}$ satisfy
\[
\sum_{i,j} \lambda_{ij}\gamma_{ij}^{n} \equiv 0 \pmod{p^{t_i}},
\qquad 1 \le n \le k.
\]
In particular, this shows that $N \subseteq D'$. Thus, it remains only to
verify that
\[
\sum_{i,j} \lambda_{ij}\gamma_{ij}^{n} \equiv 0 \pmod{p^{t_i}}
\quad (1 \le n \le k)
\]
if and only if
\[
\sum_{i,j} \lambda_{ij}(a_jG' \wedge a_iG') \in \ker \rho.
\]
This equivalence follows directly from the identity
\[
\rho(a_jG' \wedge a_iG') = [a_j,a_i],
\]
and hence the desired result holds.
\end{enumerate}
\qed
\begin{remark}\label{kernel=X}
    The proof shows that $X$ is the kernel of $$\lambda : V \otimes W \to \gamma_3(F)/[R,F] \cap \gamma_3(F),$$ where $\lambda$ is given in Lemma \ref{Karpilovsky lambda map}.
\end{remark}The following result follows from Theorem \ref{maintheorem Schur computation}.

\begin{corollary}\label{Schurmultiplier_final}
        Let $G$ be a $p$-group of nilpotency class $2$ such that $G/G'$ is an $s$-elementary abelian $p$-group and $X$ is defined as above. Then $$|M(G)|=|\frac{V \otimes W}{X}| |\frac{V\wedge V}{W}|.$$
\end{corollary}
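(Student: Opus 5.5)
The plan is to read the order of $M(G)$ off the subgroup chain $N \le M \le M^{*}$ supplied by Theorem~\ref{maintheorem Schur computation}. That theorem gives $M(G)\cong M$, where $N\le M\le M^{*}$, $N\cong (V\otimes W)/X$, and $M/N$ corresponds to $\ker\rho\le V\wedge V$ under the isomorphism $M^{*}/N\cong V\wedge V$. So I will compute
\[
|M(G)| = |M| = |N|\,|M/N| = \left|\frac{V\otimes W}{X}\right|\,|\ker\rho|.
\]

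It remains to identify $|\ker\rho|$ with $|V\wedge V|/|W|$, which is what the notation $\left|\frac{V\wedge V}{W}\right|$ in the statement means. I will show that $\rho\colon V\wedge V\to W$ is surjective. The group $W=G'$ is generated by the commutators $[g_1,g_2]=(v_1,v_2)=\rho(v_1\wedge v_2)$. Since $G$ has class $2$, the commutator map is bilinear, so these generators already form a subgroup, namely the image of $\rho$. Hence $\rho$ is onto, and $V\wedge V/\ker\rho\cong W$ gives $|\ker\rho|=|V\wedge V|/|W|$.

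An alternative is to extract the formula from Step~1 of the proof of Theorem~\ref{maintheorem Schur computation}, combined with Remark~\ref{kernel=X}. Since $X=\ker\lambda$ and $\lambda$ is surjective, we get $|\gamma_3(F)/([F,R]\cap\gamma_3(F))|=|(V\otimes W)/X|$. The exact sequence of Lemma~\ref{Karpilovsky exact sequence}, together with $M(G/G')\cong V\wedge V$, then gives the same product. No step is a real obstacle. The only point needing care is the surjectivity of $\rho$ (equivalently, the surjectivity of $M(G/G')\to G'$), and it follows directly from the class-$2$ hypothesis as above.
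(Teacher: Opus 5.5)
Your argument is correct and matches the paper, which reads the corollary off Theorem~\ref{maintheorem Schur computation} as $|M(G)|=|M|=|N|\,|M/N|=\bigl|\frac{V\otimes W}{X}\bigr|\,|\ker\rho|$, using $|\ker\rho|=|V\wedge V|/|W|$ (the same surjectivity of $\rho$ is used implicitly at the end of Step~1). One wording fix: even in class $2$ the set of single commutators need not itself be a subgroup, but $\rho$ is still onto, because its image is a subgroup containing every generator $(v_1,v_2)$ of $W$.
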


\section{
Achieving abelian groups as the Schur multiplier of non-abelian groups}
\label{Kourovka}

As a generalization of the abelian tensor product, Brown and Loday \cite{Brown1987} first introduced the non-abelian tensor product $G \otimes H$ of two groups $G$ and $H$. 
\begin{definition} \cite[Definition 2.1]{Brown1987} \label{definition: group tensor product and exterior product}
    Let $G$ and $H$ be groups which act on itself by conjugation, $g'^g=g^{-1}g'g$, 
and each of which acts upon the other in such a way that the following compatibility conditions hold: 
\begin{align*}
    g'^{(h^g)} = ((g'^{g^{-1}})^h)^g,
    h'^{(g^h)}=((h'^{h^{-1}})^g)^h 
\end{align*}
for all $g, g’ \in G$ and $h, h’ \in H$. Then the non-abelian tensor product $G \otimes H$ is the group generated by the symbols $g \otimes h$ and defined by the relations 
\begin{align*}
    gg’ \otimes h= (g^{g'} \otimes h^{g'})(g' \otimes h), \\ 
    g \otimes hh'= (g \otimes h')(g^{h'} \otimes h^{h'}) 
\end{align*}
for all $g, g’ \in G$ and $h, h’ \in H$. 
\end{definition}
\begin{definition} \label{definition: group exterior square}
    As a special case, let $G$ act on itself by conjugation. Then the above definition gives the non-abelian tensor square $G \otimes G$. The non-abelian exterior square of $G$ is defined as $G \wedge G=(G \otimes G) / \langle g \otimes g\mid g \in G\rangle$. For $g,h \in G$, the element $g \wedge h$ denotes the image of $g \otimes h$ in $G \wedge G$.
\end{definition}
It follows that, the map $f: G \wedge G \to G'$ given on the generators by $f(g \wedge h)=[g,h]$, $g,h \in G$, defines a surjective homomorphism.  Miller proved that $M(G) \cong \mathrm{Ker}(f)$ in \cite[Theorem 3]{Mill1952}, .
\begin{remark} \cite[Proposition 2.4]{Brown1987} \label{group tensor product and non-abelian tensor product are same}
    When $G$ and $H$ act on each other trivially, then $G \otimes H = G^{ab} \otimes _{\mathbb{Z}}H^{ab}$ where the latter is the tensor product of $\mathbb{Z}$-modules. 
\end{remark}

Later Rocco \cite{Rocco1991} gave another construction for $G \otimes G$. We recall this construction here to find the structure of the Schur multiplier of $G$.
Let $G$ and $G^\phi$ be isomorphic groups via the map $ g \mapsto g^\phi ,g\in G.$ Consider the group
$$\nu(G):=\langle G,G^\phi \mid R,R^\phi,[g_1,g_2^\phi]^{g_3}=[g_1^{g_3},(g_2^{g_3})^\phi]=[g_1,g_2^\phi]^{g_3^\phi} ,g_1,g_2,g_3 \in G \rangle,$$
where $R$ and $R^{\phi}$ are defining relations of $G$ and $G^{\phi}$ respectively.
Note that $[G,G^\phi]=\langle[g,h^\phi] \mid g,h \in G\rangle$ is a normal subgroup of $\nu(G).$ The map $\gamma: G \otimes G \to [G,G^\phi]$, defined on the generators by $\gamma(g_1 \otimes g_2)=[g_1,g_2^\phi]$, is an isomorphism by \cite[Proposition 2.6]{Rocco1991}. Moreover, the map $\gamma$ induces an isomorphism between $G \wedge G:=(G \otimes G)/\langle g \otimes g \mid g \in G \rangle$ and $[G,G^\phi]/\langle[g,g^\phi] \mid g \in G\rangle.$ 

The following result is from \cite[Lemma 2.1, 2.2, 2.3]{Rocco1991} and \cite[Lemma 9]{Blyth2009}.
\begin{lemma} \label{Rocco remark} 
    For a group $G$, the following relations hold in $\nu(G)$.
    \begin{enumerate}
        \item [(i)] If $G$ is nilpotent of class $c$, then $\nu(G)$ is nilpotent of class at most $c+1$.
        \item[(ii)] If $G$ is a $p$-group, then $\nu(G)$ is a $p$-group.
        \item[(iii)] If either $g \in G'$ or $h \in G'$, then $[g,h^\phi]=[h,g^\phi]^{-1}$.
        \item[(iv)] $[g,g^\phi]=1$ for all $g \in G'.$ 
        \item[(v)] $[g,g^\phi]$ is central in $\nu(G)$ for all $g \in G.$ 
        \item[(vi)]  $[g_1,g_2^\phi][g_2,g_1^\phi]$ is central in $\nu(G)$ for all $g_1,g_2 \in G.$
        \item[(vii)] If $g_1,g_2,g_3\in G$ such that $[g_3,g_1]=1=[g_3,g_2]$, then $[g_1,g_2,g_3^\phi]=1$.
        \item[(viii)] If $g_1,g_2\in G$ such that $[g_1,g_2]=1$, then $[g_1^{n},g_2^\phi]=[g_1,g_2^\phi]^n=[g_1,(g_2^\phi)^n]$, for all $n \in \mathbb{Z}$.
        \item [(ix)] $[[g_1,g_2^\phi],[h_1,h_2^\phi]]=[[g_1,g_2],[h_1,h_2]^\phi]$ for all $g_1,g_2,h_1,h_2 \in G$.
        \item [(x)] For all $g_1,g_2 \in G,[g_1,g_2^\phi]=[g_2,g_1^\phi]^{-1}$ in $G \wedge G$.
    \end{enumerate}
\end{lemma}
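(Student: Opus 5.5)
Since the statement collects facts from \cite{Rocco1991} and \cite{Blyth2009}, the plan is to derive each item from the defining relations of $\nu(G)$, with the isomorphism $\gamma\colon G\otimes G\to [G,G^\phi]$ as a fallback. That isomorphism lets us import the standard Brown--Loday identities \cite{Brown1987} whenever a direct commutator computation gets messy. The one structural fact used throughout is that the defining relations make conjugation by $g_3$ and by $g_3^\phi$ act identically on every generator $[g_1,g_2^\phi]$, and hence on all of $[G,G^\phi]$. Combined with the identities $[xy,z]=[x,z]^y[y,z]$ and $[x,yz]=[x,z][x,y]^z$, this gives a calculus for expanding commutators of the form $[\,\cdot\,,\cdot^\phi]$.

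I would first treat the local identities (iii)--(x). For (viii), expand $[g_1^n,g_2^\phi]$ by $[xy,z]=[x,z]^y[y,z]$. Each conjugate $[g_1,g_2^\phi]^{g_1^j}$ equals $[g_1^{g_1^j},(g_2^{g_1^j})^\phi]=[g_1,g_2^\phi]$ because $[g_1,g_2]=1$; the same argument applies in the second variable.

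For (vii), apply the Hall--Witt identity in $\nu(G)$ to $g_1,g_2,g_3^\phi$. The two terms other than $[g_1,g_2,g_3^\phi]$ involve conjugates such as $[g_2,g_3^\phi]^{g_1}=[g_2^{g_1},g_3^\phi]$ (using $[g_1,g_3]=1$). These should cancel after rewriting conjugation by elements of $G$ as conjugation by elements of $G^\phi$.

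For (v), note that $[g,g^\phi]^x=[g^x,(g^x)^\phi]$. Writing $g^x=g[g,x]$ and expanding, the cross terms collapse by the same action argument, so $[g,g^\phi]$ is fixed by $G$, and likewise by $G^\phi$. For (vi), expand $[g_1g_2,(g_1g_2)^\phi]$ in two ways and compare with (v). Item (x) then follows by passing to the quotient by $\langle [g,g^\phi]\rangle$. Items (iii) and (iv) come from writing $g\in G'$ as a product of commutators $[a,b]$, using $[a,b]^\phi$-expansions together with (vi), and reducing to generators. Item (ix) is the image under $\gamma$ of the Brown--Loday relation $[g\otimes h,g'\otimes h']=[g,h]\otimes[g',h']$, which can also be checked directly, since $[h_1,h_2^\phi]$ acts on $[g_1,g_2^\phi]$ as conjugation by $[h_1,h_2]$.

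For the global statements, (ii) follows because $\nu(G)/[G,G^\phi]\cong G\times G$ and $[G,G^\phi]\cong G\otimes G$ is a finite $p$-group when $G$ is. Item (i) is the step I expect to be the main obstacle. I would prove by induction that $\gamma_{i+1}(\nu(G))\subseteq \gamma_i(G)\gamma_i(G^\phi)[\gamma_i(G),G^\phi]$ (suitably symmetrized), using that $[G,G^\phi]$ is acted on through $G$. Then at $i=c+1$ only terms $[\gamma_{c+1}(G),G^\phi]=1$ remain. Keeping track of the mixed commutators in this induction, for example via (ix) and the action identification, is where the real work lies, so here I would follow Rocco's argument closely.
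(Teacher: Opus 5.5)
The paper does not prove this lemma; it quotes Rocco (Lemmas 2.1--2.3) and Blyth--Morse (Lemma 9). Your derivation from the defining relations is therefore a different route, and several pieces of it are fine:
\begin{itemize}
\item (viii);
\item (vi) and (x) from (v);
\item (iii) from (vi), since $g\mapsto[g,h^\phi][h,g^\phi]$ is then a homomorphism into the centre and so kills $G'$;
\item (vii) by Hall--Witt, provided you also use $t^{[h_1,h_2^\phi]}=t^{[h_1,h_2]}$ for $t\in[G,G^\phi]$.
\end{itemize}

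\textbf{First gap: (v) is circular.} Action agreement plus expansion, as you use it, does not give (v); you are missing Rocco's identity $[[a,b],k^\phi]=[[a,b^\phi],k^\phi]$. Expanding $[g^x,(g^x)^\phi]$ with $g^x=gc$, $c=[g,x]\in G'$, leaves (besides a conjugate of $[g,g^\phi]$) the factors $[g,c^\phi]^{c}$, $[c,c^\phi]$, $[c,g^\phi]^{c}$. Disposing of them is exactly (iii) and (iv) for $c\in G'$, which in your plan come from (vi) and hence from (v).

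The missing identity is short. Write $[a,b]=a^{-1}a^{b}$ and $[a,b^\phi]=a^{-1}a^{b^\phi}$, and expand the commutator of each with $k^\phi$. The first factors agree by action agreement, and $[a^{b^\phi},k^\phi]=[a,(k^{b^{-1}})^\phi]^{b^\phi}=[a,(k^{b^{-1}})^\phi]^{b}=[a^b,k^\phi]$. With this identity:
\begin{itemize}
\item (v) is one line: $[[g,g^\phi],k^\phi]=[[g,g],k^\phi]=1$.
\item Your direct check of (ix) only reaches $[[g_1,g_2^\phi],[h_1,h_2]^\phi]$; the identity is what turns this into $[[g_1,g_2],[h_1,h_2]^\phi]$.
\item (ix) in turn supplies the base case $[[a,b],[a,b]^\phi]=1$ of (iv). Item (iii) alone only gives $[c,c^\phi]^2=1$ for $c\in G'$.
\end{itemize}

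\textbf{Second gap: (i) is not proved, and your induction cannot work.} At $i=1$ your statement reads $\gamma_2(\nu(G))\subseteq G\,G^\phi[G,G^\phi]=\nu(G)$, which has no content. In general, the bound it gives for $\gamma_{i+2}(\nu(G))=[\gamma_{i+1}(\nu(G)),\nu(G)]$ contains $[\gamma_i(G),G^\phi]$. That need not lie in $\gamma_{i+1}(G)\gamma_{i+1}(G^\phi)[\gamma_{i+1}(G),G^\phi]$: for abelian $G\neq 1$ and $i=1$ the latter is trivial, while $[G,G^\phi]\cong G\otimes G\neq 1$.

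The statement that does carry an induction is $\gamma_{i+1}(\nu(G))=\gamma_{i+1}(G)\gamma_{i+1}(G^\phi)[\gamma_i(G),G^\phi]$. At $i=c$ it gives $\gamma_{c+1}(\nu(G))=[\gamma_c(G),G^\phi]$, and then $\gamma_{c+2}(\nu(G))\subseteq[\gamma_{c+1}(G),G^\phi]=1$. Its inductive step needs $[[x,y^\phi],k]\in[\gamma_{i+1}(G),G^\phi]$ for $x\in\gamma_i(G)$. Expanding $[x^k,(y^k)^\phi]$ produces terms such as $[x,[y,k]^\phi]$ and $[[x,y^\phi],[y,k]^\phi]$, which are absorbed precisely by (iii) and the identity above.

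\textbf{A smaller point on (ii).} You silently use that $G\otimes G$ is a finite $p$-group, which is Ellis's finiteness theorem. Once (i) is proved you can avoid it: a nilpotent group generated by finitely many elements of $p$-power order is a finite $p$-group.
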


\begin{lemma}
    If $G$ is of nilpotency class $2$, then $G \otimes G$ is abelian.
\end{lemma}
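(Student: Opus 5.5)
We work inside Rocco's group $\nu(G)$, using the isomorphism $\gamma\colon G\otimes G\to [G,G^\phi]$, $g_1\otimes g_2\mapsto [g_1,g_2^\phi]$. It then suffices to show that $[G,G^\phi]$ is abelian. Since $[G,G^\phi]$ is generated by the elements $[g_1,g_2^\phi]$, it is enough to show that any two such generators commute.

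The key input is Lemma~\ref{Rocco remark}(ix), which gives
\[
[[g_1,g_2^\phi],[h_1,h_2^\phi]]=[[g_1,g_2],[h_1,h_2]^\phi].
\]
Because $G$ has nilpotency class $2$, both $a:=[g_1,g_2]$ and $b:=[h_1,h_2]$ lie in $G'\subseteq Z(G)$. So it remains to show that $[a,b^\phi]=1$ whenever $a,b\in G'$ are central in $G$.

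There are two routes to this vanishing.

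\emph{Route via the defining relations.} The relation $[g,h^\phi]^{g_3}=[g^{g_3},(h^{g_3})^\phi]$ shows that, for central $a$, the element $[a,b^\phi]$ commutes with all of $G$, and similarly with $G^\phi$. A cleaner way is to use Lemma~\ref{Rocco remark}(vii). Write $b=\prod [x_i,y_i]$. Since $a$ is central in $G$, we have $[a,x_i]=[a,y_i]=1$. Applying (vii) with $g_3=a$, i.e.\ $[x_i,y_i,a^\phi]=1$, gives $[[x_i,y_i],a^\phi]=1$.

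To handle products of commutators, expand using the defining relation $g\otimes hh'=(g\otimes h')(g^{h'}\otimes h^{h'})$ and its mirror for the first variable. Because $b$ is central in $G$, the conjugations $g^{h'}$ occurring here act trivially when the entries involved are in $G'$, so $[a,b^\phi]$ is multiplicative in each variable. This yields $[b,a^\phi]=1$. Lemma~\ref{Rocco remark}(iii) then gives $[a,b^\phi]=[b,a^\phi]^{-1}=1$.

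\emph{Route via the class bound.} Alternatively, observe that $\nu(G)$ has class at most $3$ by Lemma~\ref{Rocco remark}(i). Hence $[[g_1,g_2^\phi],[h_1,h_2^\phi]]$ lies in $[\gamma_2(\nu(G)),\gamma_2(\nu(G))]\subseteq\gamma_4(\nu(G))=1$. This finishes the argument at once, since $[G,G^\phi]\subseteq\gamma_2(\nu(G))$.

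The second route is the one I would actually use: commutators of commutators lie in $\gamma_4$, and $\gamma_4(\nu(G))$ is trivial. The first route, with its bookkeeping of conjugations when expanding $b$, is where the only possible difficulty lies. I therefore expect no real obstacle, beyond checking that Lemma~\ref{Rocco remark}(i) is applicable with $c=2$.
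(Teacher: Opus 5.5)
Your proof is correct, and the route you say you would actually use differs from the paper's. The paper argues elementwise. By Lemma~\ref{Rocco remark}(ix), the commutator of two generators of $[G,G^\phi]$ equals $[[g_1,g_2],[h_1,h_2]^\phi]$. Lemma~\ref{Rocco remark}(vii), applied to the pair $g_1,g_2$ with $g_3=[h_1,h_2]$ (central because $G$ has class $2$), then shows this element is trivial.

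Your class-bound route replaces both identities by the global statement (i) that $\nu(G)$ has class at most $3$, together with the standard inclusion $[\gamma_2,\gamma_2]\subseteq\gamma_4$. Since $[G,G^\phi]\subseteq\gamma_2(\nu(G))$ and $\gamma$ is an isomorphism, this is a complete one-line proof. The trade-off is that (i) is a heavier structural input quoted from Rocco. The paper needs only two elementwise identities, and it gets finer information: the commutator of two tensor generators is an element $[a,b^\phi]$ with $a,b\in G'$, so abelianness reduces to the vanishing of such elements.

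Your first route is essentially the paper's argument with a detour. Since $b=[h_1,h_2]$ is already a single commutator, the multiplicativity expansion is unnecessary. Applying (vii) with $g_3=b$ rather than $g_3=a$ gives $[a,b^\phi]=1$ directly, without passing through $[b,a^\phi]$ and (iii).
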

\begin{proof}
It follows that ${[[g_1,g_2^\phi],[h_1,h_2^\phi]]=1}$ for all $g_1,g_2,h_1,h_2 \in G$ by applying Lemma \ref{Rocco remark}$(vii)$ in Lemma \ref{Rocco remark}$(ix)$. Hence, $G \otimes G$ is abelian.
\end{proof}

\begin{proposition} \cite[Proposition 20]{Blyth2009}\label{exterior-gen}
    Let $G$ be a polycyclic group with a polycyclic generating sequence $g_1,g_2,\ldots,g_k.$ Then $G \wedge G$ is generated by 
    $$\{[g_i,g_j^\phi],i >j\}.$$
\end{proposition}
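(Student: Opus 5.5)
We argue by descending induction along the polycyclic series. Put $G_i=\langle g_i,g_{i+1},\ldots,g_k\rangle$, so that $G_{i+1}\trianglelefteq G_i$ and $G_i=\langle g_i\rangle G_{i+1}$. We work in Rocco's model: $G\wedge G\cong [G,G^\phi]/\langle [g,g^\phi]\mid g\in G\rangle$, with $\nu(G)$ acting by conjugation. Write $\overline{[x,y^\phi]}$ for the image of $[x,y^\phi]$ in $G\wedge G$. Let $T_i\le G\wedge G$ be the subgroup generated by $\overline{[g_a,g_b^\phi]}$ with $a>b\ge i$.

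The claim to prove by induction is that $T_i$ contains $\overline{[x,y^\phi]}$ for all $x,y\in G_i$, and that $T_i$ is invariant under conjugation by $g_c^{\pm1}$ and $(g_c^{\phi})^{\pm1}$ for every $c\ge i$. Taking $i=1$ gives the proposition.

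\emph{Reduction to ordered pairs $a>b$.} By Lemma~\ref{Rocco remark}(x), $\overline{[g_a,g_b^\phi]}=\overline{[g_b,g_a^\phi]}^{-1}$. By the definition of the exterior square, $\overline{[g_a,g_a^\phi]}=1$. Hence every generator $\overline{[g_a,g_b^\phi]}$ with $a,b\ge i$ lies in $T_i$. It therefore suffices to work with the larger generating set $\{\overline{[g_a,g_b^\phi]}: a,b\ge i\}$.

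\emph{Expansion identities.} In $\nu(G)$ we have
\[
[xy,z^\phi]=[x,z^\phi]^{y}[y,z^\phi],\qquad [x,(yz)^\phi]=[x,z^\phi][x,y^\phi]^{z^\phi},\qquad [x,y^\phi]^{w}=[x^{w},(y^{w})^\phi]=[x,y^\phi]^{w^\phi}.
\]
With these, $[x^{-1},z^\phi]=\bigl([x,z^\phi]^{x^{-1}}\bigr)^{-1}$, and similarly for inverses in the second slot. Writing $x,y\in G_i$ as words in $g_i^{\pm1},\ldots,g_k^{\pm1}$, repeated expansion expresses $[x,y^\phi]$ as a product of conjugates of the elements $[g_a,g_b^\phi]^{\pm1}$ ($a,b\ge i$) by words in the $g_c^{\pm1}$ with $c\ge i$. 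So the first half of the claim follows once the second half (closure under conjugation) is established.

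\emph{Closure under conjugation: the main obstacle.} By the last identity, conjugating $[g_a,g_b^\phi]$ by $g_c$ or by $g_c^\phi$ gives the same element $[g_a^{g_c},(g_b^{g_c})^\phi]$. For $a,b,c\ge i$ both $g_a^{g_c}$ and $g_b^{g_c}$ lie in $G_i$. Expanding this new commutator could a priori produce conjugates again, which is circular, so the induction is what breaks the circle.

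\emph{Base case.} For $i=k$, $G_k$ is cyclic and $T_k$ is trivial. Moreover $\overline{[g_k^m,g_k^{n\phi}]}=\overline{[g_k,g_k^\phi]}^{mn}=1$ by Lemma~\ref{Rocco remark}(viii).

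\emph{Inductive step.} Assume the claim for $G_{i+1}$. The subgroup $T_{i+1}$ contains the image of $G_{i+1}\wedge G_{i+1}$, and this image is stable under conjugation by $g_i$, since conjugation by $g_i$ is an automorphism of $G_{i+1}$ (as $G_{i+1}\trianglelefteq G_i$) and acts compatibly on the exterior square. So only the new generators $[g_i,g_b^\phi]$ with $b>i$ need attention, together with conjugation by $g_i^{\pm1}$.

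For $c>i$, write $g_i^{g_c}=g_i u$ with $u\in G_{i+1}$. Then
\[
[g_i^{g_c},(g_b^{g_c})^\phi]=[g_i,(g_b^{g_c})^\phi]^{u}\,[u,(g_b^{g_c})^\phi].
\]
The second factor lies in the image of $G_{i+1}\wedge G_{i+1}$, hence in $T_{i+1}$ by induction. The first factor expands, via the second-slot rule applied to $g_b^{g_c}\in G_{i+1}$, into elements $[g_i,g_{b'}^\phi]$ with $b'>i$, conjugated by elements of $G_{i+1}$.

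For conjugation by $g_i$ itself, use $[g_i,y^\phi]^{g_i}=[g_i,(y^{g_i})^\phi]$ with $y^{g_i}\in G_{i+1}$, which is again of the same form.

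This leaves one point to settle: the set of elements $[g_i,y^\phi]$ with $y\in G_{i+1}$, together with $T_{i+1}$, generates a subgroup closed under conjugation by $G_{i+1}$. I would prove this by showing $[g_i,y^\phi]^{w}\equiv[g_i,(y^{w})^\phi]$ modulo $T_{i+1}$ for $w\in G_{i+1}$, using $g_i^{w}=g_i[g_i,w]$ with $[g_i,w]\in G_{i+1}$. I expect the careful bookkeeping in this final step to be the hardest part.
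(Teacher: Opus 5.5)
The paper gives no proof of this proposition (it is quoted from \cite{Blyth2009}), so your argument has to stand on its own. As written it does not: the step you leave open at the end is the entire difficulty, and the route you propose for it is circular.

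Your framework is fine: the descending induction on $G_i$, the expansion identities, the reduction via Lemma~\ref{Rocco remark}(x) and (viii), and the fact that the image of $[G_{i+1},G_{i+1}^\phi]$ is normalized by $g_i$. But for $y,w\in G_{i+1}$ your suggested manipulation gives
\[
[g_i,y^\phi]^{w}=[g_i[g_i,w],(y^{w})^\phi]=[g_i,(y^{w})^\phi]^{[g_i,w]}\,[[g_i,w],(y^{w})^\phi].
\]
This is again an element $[g_i,y'^\phi]$ conjugated by an element of $G_{i+1}$, namely $[g_i,w]$. Outside the nilpotent case (e.g.\ $G_{i+1}\cong\mathbb{Z}^2$ with $g_i$ acting by a matrix without eigenvalue $1$), iterating $w\mapsto[g_i,w]$ never reaches $1$.

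Moreover, even if you had the congruence $[g_i,y^\phi]^{w}\equiv[g_i,(y^{w})^\phi]$ modulo $T_{i+1}$, it would not close the loop. To place $[g_i,(y^{w})^\phi]$ in $T_i$ you must expand $y^{w}$ by the second-slot rule, which again produces conjugates by elements of $G_{i+1}$, and $y^{w}$ is no shorter than $y$. Likewise, closure of $\langle T_{i+1},[g_i,y^\phi]\mid y\in G_{i+1}\rangle$ under $G_{i+1}$ is not what you need; you need the conjugates to lie in $T_i$ itself.

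The missing ingredient is the identity $[[x,y^\phi],z]=[[x,y],z^\phi]$ in $\nu(G)$. It holds in general; it is in the same sources \cite{Rocco1991,Blyth2009} as Lemma~\ref{Rocco remark}, and item (vii) there is an immediate consequence of it. Taking $x=g_i$ and $y,z\in G_{i+1}$ gives
\[
[g_i,y^\phi]^{z}=[g_i,y^\phi]\,[[g_i,y],z^\phi].
\]
Since $G_{i+1}\trianglelefteq G_i$, we have $[g_i,y]\in G_{i+1}$. So the correction factor lies in $[G_{i+1},G_{i+1}^\phi]$, whose image is contained in $T_{i+1}$ by the induction hypothesis. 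This has three consequences:
\begin{itemize}
\item $G_{i+1}$ fixes each $\overline{[g_i,y^\phi]}$ modulo $T_{i+1}$.
\item The second-slot rule becomes $\overline{[g_i,(y_1y_2)^\phi]}\in\overline{[g_i,y_2^\phi]}\,\overline{[g_i,y_1^\phi]}\,T_{i+1}$.
\item Induction on the length of $y$ as a word in $g_{i+1}^{\pm1},\ldots,g_k^{\pm1}$ then puts every $\overline{[g_i,y^\phi]}$ into $T_i$.
\end{itemize}
Closure of $T_i$ under $G_{i+1}$ follows at once. Closure under $g_i^{\pm1}$ follows from $[g_i,y^\phi]^{g_i^{\pm1}}=[g_i,(y^{g_i^{\pm1}})^\phi]$ with $y^{g_i^{\pm1}}\in G_{i+1}$. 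Your expansion argument then completes the induction.
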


Recall that $(a,b,c)$ denotes the element $aG' \otimes [b,c]+bG' \otimes [c,a] + cG' \otimes [a,b]$ in $V \otimes W$ where $a,b,c \in G$.
 Moreover, $X_1$ and $X_2$ are the subgroups of $V \otimes W$ generated by the elements $(a,b,c)$ for all $a,b,c \in G$ and $aG' \otimes a^{p^s}$ for $a \in G$ respectively. The proof of the following lemma is immediate and is therefore omitted.
\begin{lemma}
    Let $G$ be a $s$-special $p$-group minimally generated by the set $\{\alpha_1,\alpha_2,\ldots,\alpha_d\}$. Then $X_1$ is generated by the set
  $\{(\alpha_i,\alpha_j,\alpha_k) \mid 1 \le i <j<k\le d\}$.
\end{lemma}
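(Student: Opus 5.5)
The plan is to show directly that every generator $(a,b,c)$ of $X_1$, with $a,b,c\in G$, lies in the subgroup $Y:=\langle (\alpha_i,\alpha_j,\alpha_k)\mid 1\le i<j<k\le d\rangle$. The reverse inclusion $Y\subseteq X_1$ is trivial.

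First I record the formal properties of the symbol $(a,b,c)$ in $V\otimes W$.

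\emph{Dependence on cosets.} The symbol depends only on $aG',bG',cG'$. Indeed $V=G/G'$, and since $G$ has class $2$ the commutator $[b,c]$ depends only on the cosets of $b$ and $c$.

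\emph{Alternating.} The symbol is invariant under cyclic permutations by its definition. Swapping two entries negates it, because $[b,a]=[a,b]^{-1}$. Consequently $(a,a,c)=0$ for $p$ odd: the swap gives $2(a,a,c)=0$, and $V\otimes W$ is a $p$-group with $p$ odd.

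\emph{Trilinear.} The key step is additivity in each variable. Since the symbol is alternating, it suffices to check the first variable. Using bilinearity of $\otimes$ and the class-$2$ identities $[c,aa']=[c,a][c,a']$ and $[aa',b]=[a,b][a',b]$, we expand $(aa',b,c)$. Writing $W$ additively, this gives
\[
(aa',b,c)=(aa')G'\otimes[b,c]+bG'\otimes\bigl([c,a]+[c,a']\bigr)+cG'\otimes\bigl([a,b]+[a',b]\bigr).
\]
Since $(aa')G'=aG'+a'G'$ in $V$, the right-hand side equals $(a,b,c)+(a',b,c)$. This also gives compatibility with integer powers.

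With these properties in hand, I write $aG'=\sum_i x_i\,\alpha_iG'$, and similarly $bG'=\sum_j y_j\,\alpha_jG'$ and $cG'=\sum_k z_k\,\alpha_kG'$. This is possible because the $\alpha_iG'$ generate $V$, since $G'\le\Phi(G)$. Trilinearity then expresses $(a,b,c)$ as an integer combination of the symbols $(\alpha_i,\alpha_j,\alpha_k)$. The alternating property kills every term with a repeated index and reorders the remaining indices to $i<j<k$, at the cost of a sign.

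The only real point is the additivity computation, and it is short. It uses only that $G$ has class $2$ (so commutators are bilinear on $V$) and that $p$ is odd, which is needed for the vanishing of symbols with a repeated entry. The $s$-special hypothesis enters only through $G$ having class $2$ and being generated by the $\alpha_i$.
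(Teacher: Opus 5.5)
Your proof is correct, and it is exactly the ``immediate'' argument the paper has in mind when it omits the proof: the symbol is additive in each slot because commutators are bilinear in class $2$, it is alternating, and expanding in the generators $\alpha_iG'$ of $V$ gives the claim. Two points are not needed. First, $(a,a,c)=aG'\otimes([a,c]+[c,a])+cG'\otimes[a,a]=0$ holds identically, so you do not need $p$ odd there. Second, the images of any generating set of $G$ generate $G/G'$, so you do not need $G'\le\Phi(G)$.
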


The proof of the following lemma is similar to the proof of \cite[Proposition 3.3]{Rai2018}.

\begin{lemma} \label{basis$X_2$}
    Let $G$ be a $s$-special $p$-group. Suppose the sets $\{v_1,v_2,\ldots,v_d\}$ and $\{f(v_1),f(v_2),\ldots,f(v_r)\}$ are bases of $V$ and $G^{p^s}$ respectively. Then the following set forms a basis of $X_2:$
     
$\{v_i \otimes f(v_i),v_j\otimes f(v_i),v_i \otimes f(v_k)+v_k \otimes f(v_i)\mid 1 \le i \le r, i < k \le r, (r+1) \le j \le d \}.$
\end{lemma}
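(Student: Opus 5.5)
The plan is to work entirely inside $V\otimes G^{p^s}\subseteq V\otimes W$ and use the given bases to write down explicit coordinates.

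\emph{Normalization.} By Lemma~\ref{required homomorphism}(i), $f$ is a homomorphism, and its image is $G^{p^s}$. So for $r<j\le d$ we can write $f(v_j)=\sum_{m\le r}\beta_{jm}f(v_m)$. Replacing $v_j$ by $v_j-\sum_{m\le r}\beta_{jm}v_m$ gives another basis of $V$ in which $f(v_j)=0$ for $j>r$. I intend to read the statement with the basis normalized in this way. Without it, the element $v_j\otimes f(v_i)$ need not lie in $X_2$; only the combination $v_j\otimes f(v_i)+v_i\otimes f(v_j)$ obviously does. So this normalization is the first point to settle.

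\emph{Spanning.} Each listed element lies in $X_2$ by polarization:
\[
v_i\otimes f(v_k)+v_k\otimes f(v_i)=(v_i+v_k)\otimes f(v_i+v_k)-v_i\otimes f(v_i)-v_k\otimes f(v_k).
\]
For $j>r$ the same identity, together with $f(v_j)=0$, gives $v_j\otimes f(v_i)\in X_2$. Conversely, for $v=\sum_i c_iv_i$ bilinearity gives
\[
v\otimes f(v)=\sum_{i\le r}c_i^2\,v_i\otimes f(v_i)+\sum_{i<k\le r}c_ic_k\bigl(v_i\otimes f(v_k)+v_k\otimes f(v_i)\bigr)+\sum_{j>r,\ i\le r}c_jc_i\,v_j\otimes f(v_i),
\]
since every term containing $f(v_j)$ with $j>r$ vanishes. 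Hence the listed set generates $X_2$.

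\emph{Independence.} The group $V\cong(\Z_{p^s})^d$ is a direct sum of cyclic groups of order $p^s$. Therefore $V\otimes A\cong (A/p^sA)^d=A^d$ for every subgroup $A\le W$, because $W$ has exponent dividing $p^s$. It follows that $V\otimes G^{p^s}\to V\otimes W$ is injective. In $V\otimes G^{p^s}$ the elements $v_a\otimes f(v_b)$ ($1\le a\le d$, $1\le b\le r$) form a basis in the sense of Definition~\ref{definition of basis}, and $o(v_a\otimes f(v_b))=o(f(v_b))$.

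Now take a relation among the listed elements with coefficients in the allowed ranges. Look at the coordinate of $v_i\otimes f(v_k)$ for $i<k$; it receives a contribution only from $v_i\otimes f(v_k)+v_k\otimes f(v_i)$. Look at the coordinates of $v_i\otimes f(v_i)$ and of $v_j\otimes f(v_i)$ for $j>r$; each receives a contribution only from the corresponding single listed element. Reading off these coordinates forces every coefficient to be $0$ modulo the relevant order. For the mixed element $v_i\otimes f(v_k)+v_k\otimes f(v_i)$ the order is $\max(o(f(v_i)),o(f(v_k)))$. Its coefficient is forced to vanish modulo $o(f(v_k))$ by the $v_i\otimes f(v_k)$ coordinate, and modulo $o(f(v_i))$ by the $v_k\otimes f(v_i)$ coordinate, hence modulo the maximum.

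I expect the main obstacle to be bookkeeping rather than any conceptual step. One part is the normalization $f(v_j)=0$ for $j>r$. The other is matching element orders with coefficient ranges, so that independence holds in the sense of Definition~\ref{definition of basis} and not merely as linear independence.
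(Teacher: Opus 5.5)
Your proof is correct and follows the same direct verification the paper intends by referring to Rai's Proposition 3.3, adapted to the orders $o(f(v_b))\mid p^s$: polarization gives membership in $X_2$, expanding $v\otimes f(v)$ gives generation, and disjoint coordinate supports in $V\otimes G^{p^s}=\bigoplus_{a,b}\langle v_a\otimes f(v_b)\rangle$ give independence in the sense of Definition~\ref{definition of basis}. Your normalization $f(v_j)=0$ for $j>r$ is genuinely needed, not merely convenient: for $d=3$, $r=2$ and $f(v_3)=f(v_2)$, your own normalized description of $X_2$ shows $v_3\otimes f(v_1)\notin X_2$, so the lemma as literally stated fails, whereas the normalization does hold in the paper's application in Lemma~\ref{SchurG_{j,k}}, where $\alpha_l^{p^s}=1$ for $l>j$.
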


\begin{lemma} 
     Let $G$ be a $p$-group of nilpotency class $2$, and $G/G'$ be an $s$-elementary abelian $p$-group. Then the following sequence is exact.
     \begin{equation} \label{exactseq X}
        1 \to X \xrightarrow{\alpha} (G/G') \otimes G' \xrightarrow{\lambda}\ M(G) \xrightarrow{\gamma} M(G/G') \xrightarrow{\delta} G' \to 1. 
     \end{equation} 
\end{lemma}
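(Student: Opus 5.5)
This exact sequence is obtained by splicing Lemma~\ref{Karpilovsky lambda map} (with $c=2$) together with Remark~\ref{kernel=X}. Take $c=2$, so that $\gamma_c(G)=G'=W$ and $G/\gamma_c(G)=V$. Lemma~\ref{Karpilovsky lambda map}(ii) then gives the exact sequence
\[
1 \to \ker\lambda \to V\otimes W \xrightarrow{\phi} M(G) \to M(V) \to W \to 1 ,
\]
where $\phi$ is the surjection $\lambda$ onto $\gamma_3(F)/([F,R]\cap\gamma_3(F))$ followed by the injection of this quotient into $M(G)$ from Lemma~\ref{Karpilovsky exact sequence}. In the statement, the map written $\lambda$ is this composite $\phi$. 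We take $\alpha$ to be the inclusion $X\hookrightarrow V\otimes W$, and $\gamma,\delta$ to be the maps from Lemma~\ref{Karpilovsky exact sequence}. Exactness at $M(G)$, $M(V)$ and $W$ is then already given by that lemma. Exactness at $X$ holds because $\alpha$ is an inclusion. So the only remaining point is exactness at $V\otimes W$, namely $\ker\phi=X$.

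Since the second factor of $\phi$ is injective, $\ker\phi=\ker\lambda$. We therefore need $\ker\lambda=X$, which is Remark~\ref{kernel=X}. I would justify this equality directly from the proof of Theorem~\ref{maintheorem Schur computation}, as follows.

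\emph{Inclusion $X\subseteq\ker\lambda$.} Step~1 of that proof shows $X_1\subseteq\ker\lambda$ via the Hall--Witt identity and $X_2\subseteq\ker\lambda$ via $[f,f^{-p^s}f']\in[F,R]\cap\gamma_3(F)$.

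\emph{Equality by counting orders.} Step~2 shows $|M(G)|=|M|=|V\wedge V|\,|V\otimes W/X|/|W|$. On the other hand, Lemma~\ref{Karpilovsky exact sequence} gives
\[
|M(G)| = \frac{|M(V)|}{|W|}\,\bigl|\gamma_3(F)/([F,R]\cap\gamma_3(F))\bigr| .
\]
Using $M(V)\cong V\wedge V$, comparing the two expressions yields
\[
\bigl|\gamma_3(F)/([F,R]\cap\gamma_3(F))\bigr| = |V\otimes W/X| .
\]
Since $\lambda$ is surjective, its image has order $|V\otimes W/\ker\lambda|$. Hence $|\ker\lambda|=|X|$, and together with $X\subseteq\ker\lambda$ this gives $\ker\lambda=X$.

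The main point needing care is the order computation. It relies on Step~2 producing the equality $|M(G)|=|M|$, not just an inequality, and on the identification $M(V)\cong V\wedge V$ for abelian $V$. Both are available from the proof of Theorem~\ref{maintheorem Schur computation}, so the order comparison completes the proof.
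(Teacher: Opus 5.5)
Your proposal is correct and follows the paper's own proof, which simply combines Lemma~\ref{Karpilovsky lambda map} (with $c=2$) and Remark~\ref{kernel=X}. Your order comparison is exactly the justification left implicit in that remark: you combine $X\subseteq\ker\lambda$ from Step~1 with the equality $|M(G)|=|M|$ from Step~2 of the proof of Theorem~\ref{maintheorem Schur computation}.
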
 
\begin{proof}
  The proof follows from Lemma \ref{Karpilovsky lambda map} and Remark \ref{kernel=X}.
\end{proof}

The following proposition extends \cite[Proposition 10]{Rai2023} given by Rai.

\begin{proposition} \label{Schurspecial}
Let $d,k, s \in \mathbb N$. For  $d \ge 2$ and $0 \le k \le d-2$,  $G_k$ denotes a $p$-group of nilpotency class $2$  minimally generated by the set 
$\{\alpha_1,\alpha_2,\ldots,\alpha_d\}$ such that $G_k'$ and $G_k/G_k'$ are $s$-elementary abelian and $o(\alpha_i)=p^s$ for all ${1 \le i \le d}$.  Suppose the set 
   ${\{[\alpha_i,\alpha_{i+1}] \mid i \in \{1,\ldots,  k+1\} \}}$ is independent in $G_k'$ and other commutators are trivial. Then  $M(G_k)\cong (\Z_{p^s})^n$, where ${n=\binom{d}{2}+2k+1}$.
\end{proposition}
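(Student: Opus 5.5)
The plan is to apply Theorem~\ref{maintheorem Schur computation} to $G=G_k$ with $V=G_k/G_k'\cong(\Z_{p^s})^d$ and $W=G_k'$. Write $w_i=[\alpha_i,\alpha_{i+1}]$ for $1\le i\le k+1$. Since $W$ is $s$-elementary abelian and the $w_i$ are independent and generate $G_k'$, we have $W\cong(\Z_{p^s})^{k+1}$ with basis $\{w_1,\dots,w_{k+1}\}$. Put $v_i=\alpha_iG'$.

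First I handle $f$ and $X_2$. Since $o(\alpha_i)=p^s$ and $f$ is a homomorphism by Lemma~\ref{required homomorphism}(i), $f$ vanishes on generators, so $f\equiv 0$. Hence $X_2=0$ and $X=X_1$, and the map $\sigma$ of Lemma~\ref{required homomorphism}(ii) is zero. By Lemma~\ref{homomorphism to extension}, the extension $M^*$ attached to $\sigma=0$ is the split one, so $M^*\cong N\times (V\wedge V)$. In particular $M^*$ has exponent $p^s$.

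Next I compute $N\cong (V\otimes W)/X_1$, which I expect to be the main obstacle. By the preceding lemma, $X_1$ is generated by the triples $(\alpha_a,\alpha_b,\alpha_c)$ with $a<b<c$. Such a triple is nonzero only when it contains a consecutive pair $\{i,i+1\}$ with $i\le k+1$.
\begin{itemize}
\item If it contains exactly one such pair, it equals $\pm\, v_c\otimes w_i$, where $c\notin\{i,i+1\}$.
\item If it is $(\alpha_i,\alpha_{i+1},\alpha_{i+2})$ with $i\le k$, it equals $v_i\otimes w_{i+1}+v_{i+2}\otimes w_i$, up to sign.
\end{itemize}
Consider the basis $\{v_c\otimes w_i\}$ of $V\otimes W$ and the pairs $(c,i)$ with $c\notin\{i,i+1\}$; there are $(k+1)(d-2)$ of them, and the hypothesis $k\le d-2$ keeps all indices in range. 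Among these pairs, the $2k$ pairs $(i,i+1)$ and $(i+2,i)$ for $i\le k$ are glued into $k$ two-term sums. All other pairs appear alone. The resulting generators have pairwise disjoint supports and coefficients $\pm1$, so they extend to a basis of the free $\Z_{p^s}$-module $V\otimes W$. Consequently $X_1$ is a direct summand of rank $(k+1)(d-2)-k$, and
\[
N\cong(\Z_{p^s})^{d(k+1)-(k+1)(d-2)+k}=(\Z_{p^s})^{3k+2}.
\]
The care needed here is to check the sign conventions and to confirm that no further coincidences among the triples occur.

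Finally I compute $M$. The map $\rho:V\wedge V\to W$ is surjective onto the free $\Z_{p^s}$-module $W$, so it splits. Hence $\ker\rho$ is a direct summand of $V\wedge V\cong(\Z_{p^s})^{\binom d2}$, namely $\ker\rho\cong(\Z_{p^s})^{\binom d2-(k+1)}$. Because $M^*=N\times(V\wedge V)$ splits, the subgroup $M$ with $M/N\cong\ker\rho$ is $M=N\times\ker\rho$. Therefore
\[
M(G_k)\cong M\cong(\Z_{p^s})^{3k+2+\binom d2-k-1}=(\Z_{p^s})^{\binom d2+2k+1},
\]
which is the claim. As a consistency check, the order agrees with Corollary~\ref{Schurmultiplier_final}.
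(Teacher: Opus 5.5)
Your proof is correct. Its computational core is the same as the paper's: $f$ kills the generators, so $X_2=0$ and $X=X_1$. Your generators of $X_1$ (the single tensors $v_c\otimes w_i$ together with the $k$ sums $v_i\otimes w_{i+1}+v_{i+2}\otimes w_i$) are exactly the paper's set $D_{G_k}$ of triples $(\alpha_i,\alpha_{i+1},\alpha_j)$. That set has size $(k+1)(d-3)+1=(k+1)(d-2)-k$.

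The routes differ in how they get from there to the isomorphism type of $M(G_k)$.

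\textbf{The paper's route.} It uses Theorem~\ref{maintheorem Schur computation} only through orders. It imports Jones's bound $\exp M(G_k)\le p^s$. It then reads off from the exact sequence \eqref{exactseq X} that $M(G_k)$ is an extension of $(V\otimes W)/X$ by $\ker\rho$, asserts that both pieces are $s$-elementary abelian, and counts orders.

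\textbf{Your route.} You use the structural content of the theorem. Since $f\equiv 0$, you get $\sigma=0$, so by the uniqueness in Lemma~\ref{homomorphism to extension} the group $M^*$ is the split extension and $M=N\times\ker\rho$. Each factor is free over $\Z_{p^s}$: $N$ by your disjoint-support argument, and $\ker\rho$ because $\rho$ splits onto the free module $W$.

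\textbf{Comparison.} Your route avoids the external exponent result. It also makes explicit why $(V\otimes W)/X$ is $s$-elementary abelian, namely that $X$ is a direct summand, which the paper leaves implicit. The paper's route, in turn, uses only the order information coming out of Theorem~\ref{maintheorem Schur computation} (through $\ker\lambda=X$), not the group $M^*$ itself.

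\textbf{One small correction.} You cite a lemma for the fact that $X_1$ is generated by the triples $(\alpha_a,\alpha_b,\alpha_c)$ with $a<b<c$, but that lemma is stated for $s$-special groups. $G_k$ need not be $s$-special: for example, $\alpha_d\in Z(G_k)\setminus G_k'$ when $d>k+2$. The conclusion still holds, because the map $(v_1,v_2,v_3)\mapsto v_1\otimes(v_2,v_3)+v_2\otimes(v_3,v_1)+v_3\otimes(v_1,v_2)$ is alternating and trilinear on $V$ for any group of nilpotency class $2$. Justify the step that way instead.
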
 

\begin{proof}
 Since $G_k$ is a group of nilpotency class $2$, the exponent of $M(G_k)$ is at most $p^s$ by \cite[Corollary~2.6]{Jones1974}.
Since $G_k/G_k' \cong (\Z_{p^s})^d$ and ${G_k' \cong (\Z_{p^s})^{k+1}}$, both Ker$(\gamma)$ and Im$(\gamma)$ are $s$-elementary abelian by the exactness of the sequence \eqref{exactseq X}. Hence, $M(G_k)$ is an $s$-elementary abelian $p$-group.
Following the proof of \cite[Proposition 10]{Rai2023}, it is easy to check that  
\begin{align*}
    D_{G_k}=\{(\alpha_i,\alpha_{i+1},\alpha_j)| 1\leq i \leq k+1, 1\leq j \leq d,  j \notin \{ i-1,i,i+1\}\}
\end{align*}
is a basis of $X$. Note that each element of $X$ has order $p^s$. So $|X|=p^{s((d-3)(k+1)+1)}$. 
By the exactness of the sequence \eqref{exactseq X}, we have 
\[|M(G_k)|=\frac{|M(G_k/G_k')|}{|G_k'|}\frac{|(G_k/G_k') \otimes G_k'|}{|X|}.\]
Since $|M(G_k/G_k')|=p^{\frac{1}{2}sd(d-1)}$ and $|(G_k/G_k') \otimes G_k'|=p^{sd(k+1)}$, the result follows.
\end{proof}
The following proposition extends \cite[Proposition 11]{Rai2023} given by Rai.
   
\begin{proposition}
Let $d,k,s \in \mathbb N$. For  $d \ge 4$ and $1 \le k \le d-3$,  $G_k$ denotes a $p$-group of nilpotency class $2$  minimally generated by the set 
$\{\alpha_1,\alpha_2,\ldots,\alpha_d\}$ such that $G_k'$ and $G_k/G_k'$ are $s$-elementary abelian and $o(\alpha_i)=p^s$ for all $1 \le i \le d$.  If the set 
   ${\{[\alpha_i,\alpha_{i+1}] \mid i \in \{1,\ldots,  k\}\cup\{k+2\}\}}$ is independent in $G_k'$ and other commutators are trivial, then  $M(G_k)\cong (\Z_{p^s})^n$, where ${n=\binom{d}{2}+2k}$.
 \end{proposition}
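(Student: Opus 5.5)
The plan follows the proof of Proposition~\ref{Schurspecial}: use the exact sequence \eqref{exactseq X} and reduce everything to computing $|X|$. Write $w_i=[\alpha_i,\alpha_{i+1}]$ for $i\in I:=\{1,\ldots,k\}\cup\{k+2\}$. By hypothesis $\{w_i\mid i\in I\}$ is a basis of $G_k'\cong(\Z_{p^s})^{k+1}$, and $V=G_k/G_k'\cong(\Z_{p^s})^d$.

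\emph{Structure of $M(G_k)$.} Since $G_k$ has class $2$, \cite[Corollary~2.6]{Jones1974} shows that $M(G_k)$ has exponent at most $p^s$. In \eqref{exactseq X}, the term $(G_k/G_k')\otimes G_k'$ and $M(G_k/G_k')\cong V\wedge V$ are $s$-elementary abelian, and so is $G_k'$. Arguing exactly as in Proposition~\ref{Schurspecial}, $\ker\gamma$ and $\operatorname{Im}\gamma$ are $s$-elementary abelian, and hence so is $M(G_k)$. It therefore suffices to show $|M(G_k)|=p^{s(\binom d2+2k)}$. Exactness of \eqref{exactseq X} gives
\[
|M(G_k)|=\frac{|V\wedge V|}{|G_k'|}\cdot\frac{|V\otimes G_k'|}{|X|}=\frac{p^{s\binom d2}\,p^{sd(k+1)}}{p^{s(k+1)}\,|X|}.
\]

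\emph{Computing $X$.} First, $X_2=0$. Indeed $f$ is a homomorphism by Lemma~\ref{required homomorphism}(i), $f(\alpha_iG')=\alpha_i^{p^s}=1$, and the $\alpha_iG'$ generate $V$, so $f\equiv 0$. Next, $X_1$ is generated by the elements $(\alpha_a,\alpha_b,\alpha_c)$ with $a<b<c$, by the lemma on generators of $X_1$; its proof only uses multilinearity, so it applies here. Expanding such an element in the basis $\{\alpha_jG'\otimes w_i\}$ of $V\otimes G_k'$, it is nonzero exactly when $\{a,b,c\}$ contains a pair $\{i,i+1\}$ with $i\in I$. Every basis element $\alpha_jG'\otimes w_i$ that occurs determines the triple $\{i,i+1,j\}$. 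Hence distinct triples have disjoint supports, with coefficients $\pm1$. It follows that the nonzero elements $(\alpha_a,\alpha_b,\alpha_c)$ form a basis of $X$, each of order $p^s$.

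\emph{Counting triples.} The number of such triples is counted by inclusion--exclusion. Each of the $k+1$ pairs $\{i,i+1\}$, $i\in I$, extends to a triple in $d-2$ ways. A triple containing two such pairs must be $\{i,i+1,i+2\}$ with $i,i+1\in I$. This happens exactly for $1\le i\le k-1$: the value $i=k$ fails because $k+1\notin I$, and $i=k+2$ fails because $k+3\notin I$. The hypothesis $d\ge k+3$ guarantees $\alpha_{k+3}$ exists. Thus
\[
|X|=p^{s\left((k+1)(d-2)-(k-1)\right)},
\]
and substituting gives
\[
n=\binom d2+d(k+1)-(k+1)-(k+1)(d-2)+(k-1)=\binom d2+2k.
\]

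\emph{Main obstacle.} The delicate point is the independence and counting step for $X$. One must check that the overlapping triples $\{i,i+1,i+2\}$ are counted once and not twice. One must also check that the isolated pair $\{k+2,k+3\}$ creates no overlap with the chain $1,\ldots,k+1$. This is exactly where the answer differs by one from Proposition~\ref{Schurspecial}.
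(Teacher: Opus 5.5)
Your proposal is correct and follows the paper's route. The paper likewise reduces everything to the exact sequence \eqref{exactseq X} and exhibits a basis of $X$ made of the nonzero triples $(\alpha_i,\alpha_{i+1},\alpha_j)$. It lists them explicitly as $D_{G_k}$, adding $(\alpha_{k+1},\alpha_{k+2},\alpha_{k+3})$ separately, and this is exactly the set of triples you count by inclusion--exclusion, giving $|X|=p^{s((k+1)(d-3)+2)}$. Your disjoint-support argument also supplies two facts the paper only declares ``easy to check'': the independence of these triples, and the fact that $X$ is a direct summand of $V\otimes G_k'$, which is what makes $\ker\gamma$ $s$-elementary abelian.
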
 
 \begin{proof}
Observe that the following set 
\begin{equation*}
\small
D_{G_k}=\begin{array}{cl}
\Bigl\{
(\alpha_i,\alpha_{i+1},\alpha_j)
\;\Big|\;
\begin{array}{l}
i\in \{1,2,\ldots,k\}\cup\{k+2\},\\
 1\leq j \leq d,  j \notin \{ i-1,i,i+1\}\end{array}
\Bigr\}
\displaystyle
 \ \bigcup \ 
\Bigl\{
(\alpha_{k+1},\alpha_{k+2},\alpha_{k+3})
\Bigr\}
\end{array}
\end{equation*}
is a basis of $X$. The proof proceeds similarly to the proof of Proposition~\ref{Schurspecial}.
\end{proof}
\begin{lemma}  \label{SchurG_{j,k}} 
Let $d,j,k \in \mathbb{N}$ such that $d \geq 2$, $0 \leq k \leq d-2$, ${1 \leq j \leq k+1}$.
   Consider a group $G_{j,k}^d$, minimally generated by $\alpha_1,\alpha_2,\ldots,\alpha_d$, having the following presentation,
\[G_{j,k}^d=\Biggl\langle 
\begin{array}{l|cl}
\alpha_i, 1 \le i \le d,	&  [\alpha_i,\alpha_{i+1}]=\beta_i, 1 \leq i \leq k+1,   \\
	\beta_u,1 \le u \le k+1 &  \alpha_i^{p^s}=\beta_i, \alpha_l^{p^s}=1 \text{ for } 1 \leq i \leq j <l \leq d, \\
     &  \beta_i^{p^{t_i}}= \beta_l^{p^s}=1 \text{ for }  1 \leq i \leq j < l \leq k+1
\end{array}
\Biggr\rangle.\]
\begin{enumerate} 
    \item [(i)]  If $j < k+1$, then 
$$M(G_{j,k}^d) \cong (\Z_{p^s})^{\binom{d}{2}+(2k+1)-3j} \times \prod_{i=1}^{j}\Z_{p^{(s-t_i)}} .
$$
\item [(ii)] If $j=k+1$, then 
$$M(G_{j,k}^d) \cong (\Z_{p^s})^{\binom{d}{2}-(k+1)} \times\prod_{i=1}^{k+1}\Z_{p^{(s-t_i)}} .$$
\end{enumerate}
\end{lemma}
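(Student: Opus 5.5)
We apply Theorem \ref{maintheorem Schur computation} directly, with $V=G/G'\cong(\Z_{p^s})^d$ and $W=G'=\langle\beta_1,\ldots,\beta_{k+1}\rangle\cong\prod_{i\le j}\Z_{p^{t_i}}\times(\Z_{p^s})^{k+1-j}$. First we record the map $f:V\to W$: $f(\alpha_i)=\beta_i$ for $i\le j$ and $f(\alpha_l)=1$ for $l>j$. The relevant commutator map is $\rho(\alpha_i\wedge\alpha_{i+1})=\beta_i$ for $i\le k+1$, with all other basic wedges sent to $1$.

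Step one is to compute $N\cong (V\otimes W)/X$. Since $V\otimes W\cong\prod_{i}(\Z_{p^{o_i}})^d$, where $p^{o_i}=o(\beta_i)$, we have $|V\otimes W|=p^{d(\sum_{i\le j}t_i+s(k+1-j))}$.

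\begin{itemize}[label={}]
\item $X_1$: as in Proposition \ref{Schurspecial}, it is generated by the elements $(\alpha_i,\alpha_{i+1},\alpha_m)=\alpha_m\otimes\beta_i$ (up to sign) for $m\notin\{i-1,i,i+1\}$, together with, when $i+1\le k+1$, the element $(\alpha_i,\alpha_{i+1},\alpha_{i+2})=\alpha_{i+2}\otimes\beta_i+\alpha_i\otimes\beta_{i+1}$.
\item $X_2$: it is generated by $\alpha_i\otimes\beta_i$ for $i\le j$, and, after expanding $v\otimes f(v)$ bilinearly as in Lemma \ref{basis$X_2$}, by $\alpha_m\otimes\beta_i$ for $m>j\ge i$ and $\alpha_m\otimes\beta_i+\alpha_i\otimes\beta_m$ for $i<m\le j$.
\end{itemize}

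Combining these, the quotient $(V\otimes W)/X$ is spanned by the classes of $\alpha_{i}\otimes\beta_i$, $\alpha_{i+1}\otimes\beta_i$ and $\alpha_{i-1}\otimes\beta_i$. The relations then kill $\alpha_i\otimes\beta_i$ for $i\le j$, identify $\alpha_{i+2}\otimes\beta_i$ with $-\alpha_i\otimes\beta_{i+1}$, and the $X_2$ cross terms kill further elements. Counting, each $i>j$ contributes about $(\Z_{p^s})^2$ and each $i\le j$ contributes less. The outcome should be $N\cong(\Z_{p^s})^{2k+1-2j}$ or similar in case (i), with the precise exponent fixed by the bookkeeping.

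Step two is to build $M$ from $N$. Here $M^*/N\cong V\wedge V\cong(\Z_{p^s})^{\binom d2}$, and the extension has $p^s$-power map $\sigma(v_1\wedge v_2)=v_1\otimes f(v_2)+X$. The subgroup $M/N=\ker\rho$ is generated by the wedges $\alpha_a\wedge\alpha_b$ not of the form $(i,i+1)$, together with $p^{t_i}(\alpha_i\wedge\alpha_{i+1})$ for $i\le j$; the remaining wedges with $i>j$ are not in $\ker\rho$, since $\beta_i$ has order $p^s$. The factors $\Z_{p^{s-t_i}}$ in the statement come from these elements $p^{t_i}(\alpha_i\wedge\alpha_{i+1})$: in $M^*$ a lift $e$ satisfies $e^{p^s}\in N$, equal to $\sigma(\alpha_{i+1}\wedge\alpha_i)=\alpha_{i+1}\otimes\beta_i+X$ up to sign. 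So we must determine whether that class is zero in $N$. I expect it to be identified with some generator, so that $e_i^{p^{t_i}}$ has order $p^{s-t_i}$ in $M/N$-directions, while the extension glues a cyclic generator of $N$ into the wedge part. Each $i\le j$ would then lose roughly three $\Z_{p^s}$ factors in total (two from $N$, one from $\ker\rho$) and gain one $\Z_{p^{s-t_i}}$; this matches the exponent $\binom d2+2k+1-3j$. The case $j=k+1$ differs because the final term $(\alpha_{k+1},\alpha_{k+2},\cdot)$ boundary term interacts with $X_2$ differently, which produces the exponent $\binom d2-(k+1)$.

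The main obstacle is step two, namely determining the isomorphism type of $M$ and not just its order. The order follows from Corollary \ref{Schurmultiplier_final}, but the extension $N\to M\to\ker\rho$ may not split. To get the type, I will choose explicit lifts $e_{ab}\in M^*$ and compute their $p^s$-powers via $\sigma$. Then I will change basis so that $M$ visibly decomposes as a direct product of cyclic groups, comparing orders, with the exponent bound $p^s$ from \cite[Corollary~2.6]{Jones1974} available to cap the cyclic factors.
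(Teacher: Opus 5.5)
\textbf{There is a genuine gap.} Your route, reading off $M(G)$ from Theorem \ref{maintheorem Schur computation} as an extension of $\ker\rho$ by $N$ inside $M^*$, is viable. But the proposal never proves the one fact that makes it work, and the outcomes you predict in its place are wrong.

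\textbf{The missing inclusion.} You need $V\otimes f(V)=V\otimes\langle\beta_1,\ldots,\beta_j\rangle\subseteq X$, i.e.\ $\alpha_r\otimes\beta_l\in X$ for all $1\le r\le d$ and $1\le l\le j$. This is the key claim in the paper's proof, and it follows quickly:
\begin{itemize}
\item For $r\le l-2$, $(\alpha_l,\alpha_{l+1},\alpha_r)=\alpha_r\otimes\beta_l\in X_1$.
\item For $r\ge l+2$, either $r>j$ and $\alpha_r\otimes\beta_l\in X_2$, or $r\le j$ and you subtract $\alpha_l\otimes\beta_r\in X_1$ from the cross term $\alpha_r\otimes\beta_l+\alpha_l\otimes\beta_r\in X_2$.
\item The case $r=l$ lies in $X_2$.
\item The cases $r=l\pm1$ then follow from $(\alpha_l,\alpha_{l+1},\alpha_{l+2})=\alpha_l\otimes\beta_{l+1}+\alpha_{l+2}\otimes\beta_l$ together with the cross terms.
\end{itemize}

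\textbf{Consequence for $N$.} $N$ is the free $\Z_{p^s}$-module on the classes of $\alpha_{l-1}\otimes\beta_l,\ \alpha_l\otimes\beta_l,\ \alpha_{l+1}\otimes\beta_l$ for $j<l\le k+1$, except that the class of $\alpha_j\otimes\beta_{j+1}$ vanishes. The other tensors are either killed by $X_1$ or identified via $\alpha_{l+2}\otimes\beta_l=-\alpha_l\otimes\beta_{l+1}$. So $N\cong(\Z_{p^s})^{3(k-j)+2}$ in case (i), not $(\Z_{p^s})^{2k+1-2j}$. In case (ii) we have $f(V)=W$, so $X=V\otimes W$ and $N=0$. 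That, rather than a boundary effect of $(\alpha_{k+1},\alpha_{k+2},\cdot)$, is why (ii) looks different.

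\textbf{Consequence for the extension.} The same inclusion settles your ``main obstacle'': $\sigma(v_1\wedge v_2)=v_1\otimes f(v_2)+X=0$ for all $v_1,v_2$. So the class $\alpha_{i+1}\otimes\beta_i+X$, which you expected to be ``identified with some generator'' and glued into the wedge part, is zero. The split extension $N\times(V\wedge V)$ has $p^s$-power map $0$, since $N$ has exponent dividing $p^s$. By the uniqueness in Lemma \ref{homomorphism to extension}, $M^*\cong N\times(V\wedge V)$, and hence $M\cong N\times\ker\rho$. There is no gluing.
\begin{itemize}
\item The factors $\Z_{p^{s-t_i}}$ come purely from $\ker\rho$, namely from $p^{t_i}(\alpha_i\wedge\alpha_{i+1})$ for $i\le j$.
\item The three $\Z_{p^s}$ factors lost for each $i\le j$ are all lost in $N$, not ``two from $N$, one from $\ker\rho$''.
\end{itemize}
Since $\ker\rho\cong(\Z_{p^s})^{\binom{d}{2}-(k+1)}\times\prod_{i\le j}\Z_{p^{s-t_i}}$, case (i) has exponent $\binom{d}{2}-(k+1)+3(k-j)+2=\binom{d}{2}+2k+1-3j$. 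Case (ii) is exactly $\ker\rho$, because $N=0$.

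\textbf{Comparison with the paper.} With this step filled in, your argument is shorter than the paper's. The paper uses Theorem \ref{maintheorem Schur computation} only through Corollary \ref{Schurmultiplier_final}, to get $|M(G)|$. It then determines the isomorphism type by computing $G\wedge G$ in Rocco's $\nu(G)$, bounding the orders of its generators from below via the quotients $B_u$ and $A$, and closing with an order count. The splitting $M^*\cong N\times(V\wedge V)$ makes all of that unnecessary.
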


\begin{proof}
For notational convenience, we write $G$ instead of $G_{j,k}^d$. We consider two cases.

\textbf{Case(i) $(j < k+1)$:}
Here $V \cong (\Z_{p^s})^d$ and $W \cong \prod_{i=1}^{j}\Z_{p^{t_i}}  \times (\Z_{p^s})^{k+1-j}.$ 
Following the proof of \cite[Proposition 10]{Rai2023}, we have that
$$\{(\alpha_i,\alpha_{i+1},\alpha_u)| 1\le i \le k+1,1\le  u\le d \text{ and } u \neq i-1,i,i+1\}$$
is a basis of $X_1$. Moreover, by Lemma \ref{basis$X_2$}, the following set is a basis of $X_2$,
\begin{equation}\label{eq:basis of X2}
    \begin{aligned}
    & \{\alpha_ l G' \otimes \alpha_l^{p^s},\alpha_mG' \otimes \alpha_l^{p^s},\alpha_ l G' \otimes \alpha_r^{p^s}+\alpha_ r G' \otimes \alpha_l^{p^s} \mid 1 \le l \le j,l <r \le j,\\
  & j+1 \le m \le d\}. 
\end{aligned}
\end{equation}
Now we claim that, $\{\alpha_ r G' \otimes \alpha_l^{p^s} \mid 1 \le r \le d, 1\le l \le j\} $ is contained in $ X$. Observe that, for $r \ge l+2$, $\alpha_l G' \otimes \alpha_r^{p^s}=\alpha_l G' \otimes [\alpha_r,\alpha_{r+1}] \in X_1.$ Hence $\alpha_ l G' \otimes \alpha_r^{p^s},\alpha_ r G' \otimes \alpha_l^{p^s} \in X$ for $r \ge l+2$. Now suppose $r=l+1$. consider the element,
$(\alpha_l ,\alpha_{l+1},\alpha_{l+2})=\alpha_ l G' \otimes \alpha_{l+1}^{p^s}+\alpha_{l+2} G' \otimes \alpha_l^{p^s}$.
For $j \ge l+2$, we have already seen that $\alpha_ {l+2} G' \otimes \alpha_{l}^{p^s} \in X$ and if $j < l+2$, then $\alpha_ {l+2} G' \otimes \alpha_{l}^{p^s} \in X_2$ by \eqref{eq:basis of X2}. Therefore, $\alpha_ l G' \otimes \alpha_{l+1}^{p^s} \in X$ and this proves our claim. Using this claim, it follows that 
\begin{equation*}
\small
\begin{array}{cl}
\Bigl\{
(\alpha_i,\alpha_{i+1},\alpha_u)
\;\Big|\;
\begin{array}{l}
i = j+1, j+2, \ldots, k+1,\\
1\le u \le d,
u \neq i-1,i,i+1
\end{array}
\Bigr\}
\displaystyle
 \ \bigcup \ 
\Bigl\{
\alpha_r G' \otimes \alpha_l^{p^s}
\;\Big|\;
\begin{array}{l}
1 \le r \le d,\\
1 \le l \le j
\end{array}
\Bigr\}
\end{array}
\end{equation*}
   is a basis of $X$. It is easy to check that in the first set each element $(\alpha_i,\alpha_{i+1},\alpha_u)$ has order $p^s$ and in the second set each element  $\alpha_ r G' \otimes \alpha_l^{p^s}$ has order $p^{t_l}$. Consequently, $log_p|X|=s\{(k+1-j)(d-3)+1\}+ d(\sum \limits_{i=1}^{j}t_i) $. Then applying Corollary \ref{Schurmultiplier_final}, we obtain
\begin{align*}
    |M(G)|
          & = p^{s\{\binom{d}{2}+(2k+1)-3j\}}\prod_{i=1}^{j}p^{(s-t_i)}.
\end{align*}
Now we compute the non-abelian exterior square $G \wedge G$ to compute the structure of the Schur multiplier. 
By Proposition \ref{exterior-gen}, $G \wedge G$ is generated by
\[\left\{
\begin{array}{cl}
    &[\alpha_e,\alpha_f^{\phi}] \text{ for } 1\le e <f \le d,\\
    &[\beta_u,\alpha_e^{\phi}]\text{ for } 1\le u\le k+1, 1\le e \le d, \\
    &[\beta_u,\beta_v^{\phi}]\text{ for }  1\le u<v\le k+1.
\end{array}
\right\}\]
Using Lemma \ref{Rocco remark},  the following relations hold in $G \wedge G$, 
\begin{align*}
   [\beta_u,\alpha_e^{\phi}]&=[\alpha_u^{p^s},\alpha_e^{\phi}]=[\alpha_u,\alpha_e^{\phi}]^{p^s}, &\text{ for } 1 \le u \le j, 1 \le e \le d;\\
  [\beta_u,\alpha_e^{\phi}]&=[\alpha_u,\alpha_{u+1},\alpha_e^{\phi}]=1, &\text{ for }  j+1 \le u \le k, 1 \le e \le d \text{ and } \\
  & &e \neq u-1,u,u+1,u+2;\\
   [\beta_{k+1},\alpha_e^{\phi}]&=[\alpha_{k+1},\alpha_{k+2},\alpha_e^{\phi}]=1, &\text{ for } 1 \le e \le d \text{ and } e \neq k,k+1,k+2;\\
   [\beta_u,\beta_v^{\phi}]&=[\alpha_u,\alpha_{u+1},\beta_v^{\phi}]=1, & \text{ for }  1\le u<v\le k+1.
\end{align*}
Also,
\begin{align*}
    [\alpha_{u+1}^{-1},\alpha_u^{-1},\alpha_{u+2}^\phi]^{\alpha_u}
    & = [\alpha_u\alpha_{u+1}\beta_u^{-1}\alpha_{u+1}^{-1}\alpha_u^{-1},\alpha_{u+2}^\phi]^{\alpha_u}= [\beta_u^{-1},\alpha_{u+2}^\phi]^{\alpha_u}\\
    & = [(\alpha_u^{-1}\beta_u^{-1}\alpha_u),(\alpha_u^{-1}\alpha_{u+2}\alpha_u)^\phi]\\
    & = [\beta_u^{-1},\alpha_{u+2}^\phi]= [\beta_u,\alpha_{u+2}^\phi]^{-1}.
    \end{align*} 
Similarly, we obtain $[\alpha_{u+2},\alpha_{u+1},\alpha_u^{\phi}]^{\alpha_{u+1}^{-1}} = [\beta_{u+1},\alpha_u^\phi]^{-1}$.
By the Hall-Witt identity, we have
\begin{align*}
    1 & =[\alpha_{u+2},\alpha_{u+1},\alpha_u^{\phi}]^{\alpha_{u+1}^{-1}}[\alpha_{u+1}^{-1},\alpha_u^{-1},\alpha_{u+2}^\phi]^{\alpha_u}[\alpha_u,\alpha_{u+2}^{-1},(\alpha_{u+1}^{-1})^\phi]^{\alpha_{u+2}}\\
    & = [\beta_{u+1},\alpha_u^\phi]^{-1} [\beta_u,\alpha_{u+2}^\phi]^{-1}.
\end{align*}
This implies that $[\beta_{u+1},\alpha_u^\phi]=[\beta_u,\alpha_{u+2}^\phi]^{-1}$ for $1 \le u \le k$.
Hence $G \wedge G$ is generated by 
\begin{equation}\label{eq:generators of G wegde G}
\left\{
\begin{array}{cl}
    & [\alpha_e,\alpha_f^{\phi}] \text{ for }  1\le e <f \le d,\\
    & [\beta_u,\alpha_e^{\phi}]\text{ for } j+1\le u\le k, e=u,u+1,u+2,\\
    & [\beta_{k+1},\alpha_e^{\phi}] \text{ for } e=k+1,k+2.
\end{array}
\right\}
\end{equation}
Since $|M(G)| =p^{s\{\binom{d}{2}+(2k+1)-3j\}} \prod\limits_{i=1}^{j} p^{(s-t_i)}$ and $|G'|=p^{s(k+1-j)+\sum\limits_{i=1}^{j}t_i}$, we have that $|G \wedge G|=p^{s\{\binom{d}{2}+3(k-j)+2\}}$.

 Consider the normal subgroup $H_u$ of $G$ generated by the following set
 \begin{align*}
     \{  \alpha_1 , \ldots , \alpha_{u-1} ,   \alpha_{u+3} ,\ldots  ,\alpha_d, \ & \beta_1, \ldots,\beta_{u-1}, \beta_{u+2},\ldots,\beta_{k+1} \}, \\ &j+1 \leq  u < k+1.
 \end{align*}
Then the group $B_u:=G/H_u$ has the following presentation
\[ B_u=\biggl\langle 
\begin{array}{l|cl}
	 \alpha_i,u \le i \le u+2, &  [\alpha_{u},\alpha_{u+1}]=\beta_u, [\alpha_{u+1},\alpha_{u+2}]=\beta_{u+1}   \\
	\beta_u,\beta_{u+1} &\alpha_i^{p^s}=\beta_u^{p^s}=\beta_{u+1}^{p^s}=1 
    \end{array}
\biggr\rangle.\]
By Proposition \ref{Schurspecial}, we have that $M(B_u)\cong (\Z_{p^s})^6$. Therefore, ${B_u \wedge B_u}$ has order $p^{8s}$ and is minimally generated by $\{[\alpha_{u},\alpha_{u+1}^\phi],[\alpha_{u+1},\alpha_{u+2}^\phi],[\alpha_{u},\alpha_{u+2}^\phi],$\\$[\beta_u,\alpha_{u}^\phi],[\beta_u,\alpha_{u+1}^\phi], 
[\beta_{u+1},\alpha_{u+1}^\phi],[\beta_{u+1},\alpha_{u+2}^\phi],[\beta_{u},\alpha_{u+2}^\phi]\}$ with each element of this set having order $p^s$. By the natural epimorphism
 $[G,G^\phi] \to [B_u,B_u^\phi]$, the set of elements $\{[\beta_u,\alpha_{u}^\phi],[\beta_u,\alpha_{u+1}^\phi],[\beta_{u},\alpha_{u+2}^\phi],[\beta_{u+1},\alpha_{u+1}^\phi],[\beta_{u+1},\alpha_{u+2}^\phi]\}$ have order at least $p^s$ in  $G \wedge G$. Since ${[\beta_i,\alpha_e^{\phi}]^{p^s}=[\beta_i^{p^s},\alpha_e^{\phi}]=1}$, for $i=u,u+1$ and ${u\le e\le u+2}$, the elements $[\beta_u,\alpha_{u}^\phi],[\beta_u,\alpha_{u+1}^\phi],[\beta_{u},\alpha_{u+2}^\phi],[\beta_{u+1},\alpha_{u+1}^\phi],$\\$[\beta_{u+1},\alpha_{u+2}^\phi]$ have order $p^s$.

Now consider the following quotient group, 
\begin{align*}
  A &= \frac{G}{\langle \beta_1, \ldots,\beta_{k+1}\rangle}  \cong \langle \alpha_1\rangle \times \langle \alpha_2 \rangle \times \ldots\times \langle \alpha_d \rangle \cong (\Z_{p^s})^d.
\end{align*}
Note that $A \wedge A$ is minimally generated by the set ${\{[\alpha_p,\alpha_{q}^{\phi}]\mid 1\le p <q \le d\}}$ and each element of this set  has order $p^s$. The natural epimorphism
 ${[G,G^\phi] \to [A,A^\phi]}$
 shows that each element of the set $\{[\alpha_p,\alpha_q^{\phi}]\mid 1 \le p <q \le d\}$  has order at least $p^s$ in $G \wedge G$. 
 
Since $|G \wedge G|=p^{s\{\binom{d}{2}+3(k-j)+2\}}$, using the above computations of the order of the generators, it follows  from \eqref{eq:generators of G wegde G}, that $G \wedge G$ is minimally generated by the set
\[\left\{ 
\begin{array}{cl}
	&  [\alpha_e,\alpha_f^{\phi}] \text{ for }  1\le e <f \le d,   \\
	&  [\beta_u,\alpha_e^{\phi}]\text{ for } j+1\le u\le k, e=u,u+1,u+2, \\
     &  [\beta_{k+1},\alpha_e^{\phi}] \text{ for } e=k+1,k+2 
\end{array}
\right\},\]
with each generator having order $p^s$. Therefore, $M(G)$ is minimally generated by the set
\[\left\{ 
    \begin{array}{cl}
    &  [\alpha_e,\alpha_{e+1}^{\phi}]^{p^{t_e}} \text{ for } 1 \le e \le j,\\
                 &  [\alpha_e,\alpha_f^{\phi}] \text{ for } 1 \le e < f \le k+2 \text{ and } f \neq e+1, \\
                 & [\alpha_e,\alpha_f^{\phi}] \text{ for } k+2 \le e < f \le d,\\
                 & [\beta_u,\alpha_e^{\phi}]\text{ for } j+1\le u\le k, e=u,u+1,u+2,\\
                 & [\beta_{k+1},\alpha_e^{\phi}] \text{ for } e=k+1,k+2
\end{array}
\right\}.\]
Hence, $M(G) \cong (\Z_{p^s})^{\binom{d}{2}+(2k+1)-3j} \times \prod\limits_{i=1}^{j}\Z_{p^{(s-t_i)}}$.

\textbf{Case(ii) $(j=k+1)$:} The proof goes similarly as the proof of case (i). Note that, here $V \cong (\Z_{p^s})^d$ and $W \cong  \prod\limits_{i=1}^{k+1}\Z_{p^{t_i}}$. In this case, $X_2$ has a basis 
\[ \biggl\{ 
\begin{array}{l|cl}
\alpha_ l G' \otimes \alpha_l^{p^s},\alpha_mG' \otimes \alpha_l^{p^s}, &  1 \le l <r \le (k+1),  \\
	\alpha_ l G' \otimes \alpha_r^{p^s}+\alpha_ r G' \otimes \alpha_l^{p^s} & k+2 \le m \le d  
    \end{array}
\biggr\}.\]
We also have
$$\{\alpha_ r G' \otimes \alpha_l^{p^s} \mid 1 \le r \le d, 1\le l \le k+1\} \subseteq X$$
such that each element of this set has order $p^{t_l}.$
Since $log_p|V \otimes W|=d(\sum\limits_{i=1}^{k+1}t_i)$, we obtain that  $\{ \alpha_ r G' \otimes \alpha_l^{p^s} \mid 1 \le r \le d, 1\le l \le k+1 \}$ is a basis of $X$. Therefore, 
\begin{align*}
    |M(G)|
          & = p^{s\{\binom{d}{2}-(k+1)\}} \prod_{i=1}^{k+1}p^{(s-t_i)}.
\end{align*}
Moreover,
    $$G \wedge G=\langle [\alpha_e,\alpha_f^{\phi}] \text{ for }  1\le e <f \le d \rangle,$$
and $[\alpha_e,\alpha_f^{\phi}]$ has order $p^s$ for $1\le e <f \le d$.
Thus $M(G)$ is minimally generated by the set
\[\left\{ 
    \begin{array}{cl} &  [\alpha_e,\alpha_{e+1}^{\phi}]^{p^{t_e}} \text{ for } 1 \le e \le k+1,\\
                 &  [\alpha_e,\alpha_f^{\phi}] \text{ for } 1 \le e < f \le k+2 \text{ and } f \neq e+1, \\
                 & [\alpha_e,\alpha_f^{\phi}] \text{ for } k+2 \le e < f \le d
    \end{array}
\right\}.\]
Hence, $M(G) \cong   (\Z_{p^s})^{\binom{d}{2}-(k+1)} \times \prod\limits_{i=1}^{k+1}\Z_{p^{(s-t_i)}}.$\\ 
\end{proof}

\begin{theorem} \label{bound M(G_{j,k})}
 Let $d, r, s , m_i \in \mathbb{N}$ with $1 \leq r < d$ and
$0 < m_i < s$ for $i \in \{1, \ldots,r\}$.
The following abelian groups occur as the Schur multiplier of some non-abelian group:
 $$ (\Z_{p^s})^{\binom{d}{2}-(d-t)} \times  \Z_{p^{m_1}} \times \Z_{p^{m_2}} \times \cdots \times \Z_{p^{m_r}},$$
 where $t$ achieves all the following possible values
\begin{enumerate}
\item [(i)] $t=1$, if $r=d-1$.
    \item [(ii)]  $ t=1,2,3$, if $r=d-2$.
 \item [(iii)]  $1  \le t \le 3(d-r-1)$ except ${t=3(d-r-1)-1}$, if $r< d-2$.
\end{enumerate}
\end{theorem}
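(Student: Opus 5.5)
The plan is to realize every listed group as $M(G_{j,k}^d)$ for a suitable group from Lemma~\ref{SchurG_{j,k}}, choosing the parameters $j,k$ and the exponents $t_i$ appropriately. The basic dictionary is $t_i:=s-m_i$ for $1\le i\le r$. Since $0<m_i<s$, we get $1\le t_i<s$ and $\Z_{p^{s-t_i}}=\Z_{p^{m_i}}$.

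To gain flexibility I will allow $j\ge r$. For the indices $r<i\le j$ I put $t_i:=s$, so that the extra factors $\Z_{p^{s-t_i}}$ are trivial. The presentation in Lemma~\ref{SchurG_{j,k}} still defines a non-abelian $p$-group of class $2$ with $G/G'\cong(\Z_{p^s})^d$; this is because $t_i\le s$ keeps $\alpha_i^{p^s}=\beta_i$ consistent, and $\beta_i$ is central of order $p^{t_i}$. The lemma then gives the Schur multiplier directly, so only parameter bookkeeping remains.

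\emph{Using case (ii) of Lemma~\ref{SchurG_{j,k}} ($j=k+1$).} The $s$-part has rank $\binom d2-j$. Matching this with $\binom d2-(d-t)$ gives $t=d-j$. The constraints are $r\le j=k+1\le d-1$, so $t$ runs over $1,\dots,d-r$. When $r=d-1$ this gives only $t=1$, which is item (i).

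\emph{Using case (i) of Lemma~\ref{SchurG_{j,k}} ($r\le j<k+1\le d-1$).} The rank is $\binom d2+2k+1-3j$, so $t=d+2k+1-3j$. Fix $j$ and let $k$ range from $j$ to $d-2$. Then $t$ runs through the arithmetic progression $d-j+1,\,d-j+3,\,\dots,\,3d-3-3j$ in steps of $2$.

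The final step is to check that the union over all admissible $j\ge r$ of these progressions, together with $\{1,\dots,d-r\}$, is exactly the set claimed.
\begin{itemize}
\item For $r=d-2$, take $j=r$. Case (i) forces $k=d-2$, giving $t=3$, and case (ii) gives $t=1,2$. This is item (ii).
\item For $r<d-2$, the largest value is $3(d-r-1)$, attained at $j=r$, $k=d-2$. The value $3(d-r-1)-1$ would need $j=r$ (for $j\ge r+1$ the maximum is $3(d-r-2)<3(d-r-1)-1$) but has the wrong parity for the $j=r$ progression, so it is missing, as stated.
\item For the remaining values between $d-r+1$ and $3(d-r-1)-2$, interleave the progressions for $j=r$ and $j=r+1$. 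These have opposite parities, since their starting points $d-r+1$ and $d-r$ differ by one. The progression for $j=r+1$ runs up to $3(d-r-2)$, which covers the gaps left by $j=r$ below $3(d-r-1)-2$.
\end{itemize}

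The main obstacle is this covering argument near the top of the range, together with confirming that Lemma~\ref{SchurG_{j,k}} really applies when some $t_i=s$. If the top gap-filling fails for small $d-r$, I will patch it using the variant from the second proposition above (the commutator $[\alpha_{k+1},\alpha_{k+2}]$ omitted), combined with the same $t_i$ trick. That variant shifts the rank by one relative to case (i) and so supplies the missing parity class.
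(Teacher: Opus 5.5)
Your proposal is correct and follows the paper's proof. You use the same groups $G_{j,k}^d$ from Lemma~\ref{SchurG_{j,k}}, with $t_i=s-m_i$ for $i\le r$ and $t_i=s$ for $r<i\le j$, and differ only in bookkeeping: you group the values of $t$ by fixed $j$ (step-$2$ progressions, of which $j=r$ and $j=r+1$ together with the $j=k+1$ family suffice), whereas the paper groups them along the diagonals $k-j=a$.

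Your interleaving already works for every $d-r\ge 3$ (for instance $d-r=3$ gives exactly $\{1,2,3,4,6\}$), so the fallback via the second proposition is unnecessary. That is fortunate, since that proposition only treats $s$-elementary $G'$ and would need its own analogue of Lemma~\ref{SchurG_{j,k}}.
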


\begin{proof}
  Consider the group $G_{j,k}^d$ given in Lemma \ref{SchurG_{j,k}} with $r \le j$ and
 \[t_i=\biggl\{ 
\begin{array}{lcl}
s-m_i, && \text{ if } 1 \le i \le r\\
s, && \text{ if }r<i\le j.
	     \end{array}
\]
 Then we have,
$$M(G_{j,k}^d) \cong (\Z_{p^s})^{n_{j,k}} \times\prod_{i=1}^{r}\Z_{p^{m_i}} $$
for some $n_{j,k} \in \mathbb{N}$.  
 For $r \le j \le (d-1)$ and $(r-1) \le k \le (d-2)$, define a matrix,
 $$A=[m_{j,k}] \in M_{d-r}(\mathbb Z),$$
having entries
\[ m_{j,k}=\biggl\{ 
\begin{array}{lcl}
n_{j,k}, && \text{if } j \leq k+1\\
0, && \text{otherwise}.
	     \end{array}
\]
For $-1 \leq a \leq d-r-2$,   define a set ${S_a=\{m_{j,k}\mid k-j=a\}}$. 
  Note that when $a=-1$, it gives the principal diagonal entries of $A$. By Lemma \ref{SchurG_{j,k}}$(ii)$, ${\{m_{(k+1),k} \mid (r-1) \le k \le (d-2)\}}$ is the set of all integers between ${\min S_{-1}=\binom{d}{2}-(d-1)}$ and $\max S_{-1}={\binom{d}{2}-r}$. 
  
If $r=d-1$, then $k=d-2$ and $m_{(d-1),(d-2)}=\binom{d}{2}-(d-1)$. Therefore,
$t=1$ and hence $\textit{(i)}$ follows.

For $r=d-2$, $k$ takes the values $d-3,d-2$. Using Lemma \ref{SchurG_{j,k}}, we have 
$$m_{(d-2),(d-3)}=\binom{d}{2}-(d-2),\quad m_{(d-1),(d-2)}=\binom{d}{2}-(d-1),$$
$$m_{(d-2),(d-2)}=\binom{d}{2}-(d-3).$$
Therefore, $t=1,2,3$ and hence $(ii)$ follows.

Now assume that $r<d-2$.  If $a=k-j$, then ${r \le j \le (d-2)-a}$ and it follows that ${a+r \le k \le d-2}$. 
Consider ${0 \le a \le (d-r-2)}$. By Lemma~\ref{SchurG_{j,k}}$(i)$, $m_{k-a,k}
=\binom{d}{2}-k+(3a+1)$. Since $\min {S_{a}}=\binom{d}{2}+3a-(d-3) $ and $ {\max{S_{a}}= \binom{d}{2}+3a -(a+r-1)}$, the set
$S_a={\{m_{k-a,k}\mid a+r \le k \le d-2\}}$  consists of all the integers $n$ such that
$$ \binom{d}{2}+3a-(d-3) \leq n \leq  \binom{d}{2}+3a -(a+r-1).$$
Also, for  $0\leq a \leq d-r-5$, we have $${\min{S_{a+1}}=\binom{d}{2}+3(a+1)-(d-3)\le \max{S_{a}}=\binom{d}{2}+3a -(a+r-1)}.$$ 
Thus, 
${\{m_{k-a,k} \mid 0\leq a \leq d-r-5,\ a+r \leq k \leq d-2\}}$  is the set of all integers between $\min{S_0}=\binom{d}{2}-(d-3)$ and ${\max{S_{d-r-5}}=\binom{d}{2}-(d-(3d-3r-9))}$. 
Hence, we are left with ${a=d-r-4}$, ${d-r-3}$ and $d-r-2$. The following table gives the values of $m_{j,k}$ for those $a$.
\[\setlength{\tabcolsep}{3pt} 
\renewcommand{\arraystretch}{2.2} 
{\footnotesize
\begin{tabular}{|c|c |} 
	\hline
	$a$ &  $m_{j,k}$ \\ [0.005ex] 
	\hline\hline 
    $d-r-4$ & $\binom{d}{2}-(d-(3d-3r-7)),~\binom{d}{2}-(d-(3d-3r-8)),~\binom{d}{2}-(d-(3d-3r-9))$\\
    \hline
    $d-r-3$ & $\binom{d}{2}-(d-(3d-3r-5)),~\binom{d}{2}-(d-(3d-3r-6)$\\
    \hline
    $d-r-2$ & $\binom{d}{2}-(d-(3d-3r-3))$\\
    \hline
    \end{tabular}}\]\\
Therefore, ${\{m_{k-a,k} \mid 0\leq a \leq d-r-2,\ a+r \leq k \leq d-2\}}$ is the set of all integers lying between $\binom{d}{2}-(d-3)$ and 
 $\binom{d}{2}-(d-(3d-3r-3))$ except $\binom{d}{2}-(d-(3d-3r-4))$.
 Now for $a=-1$, we have
 $$m_{(d-2),(d-3)}=\binom{d}{2}-(d-2),\quad m_{(d-1),(d-2)}=\binom{d}{2}-(d-1).$$ 
This proves $(iii)$ and hence the result.
\end{proof} 

The following corollary indicates that the number of copies of $\mathbb{Z}_{p^s}$ in the Schur multiplier obtained in Theorem \ref{bound M(G_{j,k})} can achieve all but finitely many values in $\mathbb{N}$.

\begin{corollary} \label{values of n}
Let $n,r,s,m_i \in \mathbb{N}$  for $i \in \{1,2,\cdots r\}$ such that    ${0 < m_i < s}$. Then the following abelian groups  occur as the Schur multiplier of some non-abelian groups:
     $$(\Z_{p^s})^n \times \Z_{p^{m_1}} \times \Z_{p^{m_2}} \times \cdots \times \Z_{p^{m_r}}$$ 
for
$n \ge \binom{a}{2}-(a-1)$, where $a=\lceil\frac{3r}{2}+2\rceil.$

\end{corollary}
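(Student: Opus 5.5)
The plan is to deduce the corollary from Theorem~\ref{bound M(G_{j,k})}(iii). Fix $r$ and the $m_i$. For each admissible $d$ with $r<d-2$, the theorem gives every group $(\Z_{p^s})^n \times \prod_{i=1}^r \Z_{p^{m_i}}$ with
\[
n=\binom{d}{2}-(d-t), \qquad 1\le t\le 3(d-r-1),\ t\neq 3(d-r-1)-1 .
\]
As $t$ varies, $n$ therefore fills the interval
\[
I_d=\Bigl[\binom{d}{2}-(d-1),\ \binom{d}{2}-d+3(d-r-1)\Bigr]
\]
with one gap at $n=\binom{d}{2}-d+3(d-r-1)-1$. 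Since $\binom{d}{2}-(d-1)=\binom{d-1}{2}$, the left endpoint of $I_d$ is $\binom{d-1}{2}$. The strategy is to show that the intervals $I_d$, for $d\ge a$, overlap enough that their union is all of $[\binom{a}{2}-(a-1),\infty)$. The single-point gaps must also be covered by the next interval.

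The key step is the overlap comparison between $I_d$ and $I_{d+1}$. The left endpoint of $I_{d+1}$ is $\binom{d}{2}$. The right endpoint of $I_d$ is $\binom{d}{2}+2d-3r-3$, and the gap sits at $\binom{d}{2}+2d-3r-4$. So $I_{d+1}$ covers the gap of $I_d$, and joins $I_d$ without a hole, as soon as
\[
\binom{d}{2}\le \binom{d}{2}+2d-3r-4, \quad\text{that is,}\quad d\ge \tfrac{3r}{2}+2 .
\]
With $a=\lceil \tfrac{3r}{2}+2\rceil$, this condition holds for every $d\ge a$. Also $a\ge r+3$, so the hypothesis $r<d-2$ of part (iii) is satisfied for all such $d$.

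Next I would check that the right endpoints $\binom{d}{2}+2d-3r-3$ tend to infinity. Given that, induction on $d$ shows that $\bigcup_{d\ge a} I_d$, minus the gaps, contains every integer $n\ge \binom{a-1}{2}=\binom{a}{2}-(a-1)$. At each step, the gap of $I_d$ lies in $[\binom{d}{2},\ \text{right end of }I_{d+1}]$, and that range is contained in $I_{d+1}$. The gap of $I_{d+1}$ is not an issue here, because it is handled at the next step of the induction.

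The main obstacle is the boundary bookkeeping at the gap. One must verify that the gap of $I_d$ is not also the gap of $I_{d+1}$, which is clear since the latter lies further right. One must also handle the edge case $d=a$ with $3r$ odd, where the ceiling makes the inequality strict rather than tight. Finally, each value of $n$ is realized by a non-abelian group, namely $G^d_{j,k}$ with $G'\neq 1$, exactly as in the proof of Theorem~\ref{bound M(G_{j,k})}.
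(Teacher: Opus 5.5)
Your proposal is correct and follows essentially the same route as the paper. Both apply Theorem~\ref{bound M(G_{j,k})}(iii) to consecutive values $d$ and $d+1$. Both observe that the range for $d+1$ starts at $\binom{d}{2}$, which is at most the excluded value $\binom{d}{2}+2d-3(r+1)-1$ precisely when $d\ge \frac{3r}{2}+2$, and then let $d$ run over all $d\ge a$.

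Your explicit checks are correct details the paper leaves implicit (the paper even writes ``greater than'' where ``$\ge a$'' is needed). These are that $a\ge r+3$, so part (iii) applies, and that each excluded value is covered by the next range.
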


\begin{proof}
If $b \in \mathbb N$ such that $(r+2) < b$, then taking $d=b$ in Theorem \ref{bound M(G_{j,k})}$(iii)$, $n$ obtains all the values between
 $\binom{b}{2}-(b-1)$ and $ \binom{b}{2}+2b-3(r+1)$ except $\binom{b}{2}+2b-3(r+1)-1$.
 Taking $d=b+1$, we get that $n$ obtains all the values between $\binom{b+1}{2}-b 
 $ and $\binom{b+1}{2}+2(b+1)-3(r+1)$   except $\binom{b+1}{2}+2(b+1)-3(r+1)-1$.
  But, for $b \ge \frac{3r}{2}+2$, we have 
    $$\binom{b+1}{2}-b \le \binom{b}{2}+2b-3(r+1)-1.$$
    By taking $d$ to be all values greater than $\lceil\frac{3r}{2}+2\rceil$, we obtain that 
    $n$ takes all the values greater equal to $\binom{a}{2}-(a-1)$ for $a=\lceil\frac{3r}{2}+2\rceil$.
\end{proof}


\noindent \textbf{Proof of Theorem \ref{Application_Kourovka}.}

   \noindent $(i)$ For $n, m_1 \in \mathbb N \cup \{0\}, m_1 <s$, there are $d, q, t_1 \in \mathbb N$ such that $n={d \choose 2}-q$ for $1 \leq q \leq d-1$ and $m_1=s-t_1$. 
   Taking the group $G_{q,q-1}^d$   given in Lemma~\ref{SchurG_{j,k}} with $t_i=s$ for  $2\leq i \leq j$, we have  ${M(G_{q,q-1}^d) \cong (\Z_{p^s})^{n} \times \Z_{m_1}}$.
   \\
   
   \noindent $(ii)$ By Corollary \ref{values of n}, $(\Z_{p^s})^n \times \Z_{p^{m_1}}\times \Z_{p^{m_2}}$ is the Schur multiplier of some non-abelian group for $n \ge 6$. Moreover, the cases $n=1$ and $n=3,4,5$ follow from Theorem \ref{bound M(G_{j,k})} $(ii)$ by taking $d=3$ and $d=4$ respectively.\\

   \noindent $(iii)$ 
This follows from Corollary \ref{values of n} for $r \geq 3$.\\

Let $n_1,n_2,n_3 \in \mathbb{N} \cup \{0\}$.  We show that the group $\Z_{p^{n_1}} \times \Z_{p^{n_2}} \times \Z_{p^{n_3}}$ is also obtained as the Schur multiplier of some non-abelian $p$-group. 
If all $n_1,n_2$ and $n_3$ are distinct or $n_2=n_3$ with $n_2 < n_1 $, then the result follows by taking $n=1,s=n_1,m_1=n_2,m_2=n_3$ in $(ii)$. If $n_2=n_3$ and $n_2 > n_1$, then  we obtain the result  by taking $n=2,s=n_2$ and $m_1=n_1$ in $(i)$. Finally, if $n_1=n_2=n_3$, then taking $n=3,s=n_1$ and $m_1=0$ in $(i)$ completes the proof.

\qed

We note that many more abelian $p$-groups can be obtained as the Schur multiplier of some non-abelian $p$-groups. Constructing the appropriate $G$ of nilpotency class $2$ having $G/G'$ as an $s$-elementary abelian $p$-group and using the  method described in Section~\ref{section: Schur multiplier of group with s elementary abelian} gives the result. The following table provides some classes of groups of such kind and their corresponding Schur multiplier, which can be derived using similar techniques used in the proof of Lemma \ref{SchurG_{j,k}}.


\[\setlength{\tabcolsep}{4pt} 
\renewcommand{\arraystretch}{3} 
{\small
\begin{tabular}{|c| c |} 
	\hline
	$G$ & $M(G)$ \\ [0.15ex] 
	\hline\hline 
    
    

    \makecell[l]{$\langle \alpha_1,\alpha_2,\alpha_3,\beta_1,\beta_2 \mid [\alpha_1,\alpha_2]=\beta_1,[\alpha_2,\alpha_3]=\beta_2,$ \\
$\alpha_1^{p^s}=\beta_1,\alpha_2^{p^s}=\beta_2,\alpha_3^{p^s}=\beta_1^{p^s}=\beta_2^{p^s}=1\rangle$\\
    $\times \langle \gamma \mid \gamma^{p^t}=1 \rangle$, $t \le s$}
    & $\Z_{p^s} \times (\Z_{p^t})^3$   \\ 
	\hline

    
    \makecell[l]{$\langle \alpha_1,\alpha_2,\beta_1 \mid [\alpha_1,\alpha_2]=\beta_1,\alpha_1^{p^s}=
   \alpha_2^{p^s}=\beta_1^{p^s}=1\rangle$\\
   $\times \langle \gamma \mid \gamma^{p^t}=1 \rangle$}
   & $ (\Z_{p^s})^2 \times (\Z_{p^t})^2$   \\ 
	\hline

    \makecell[l]{$\langle \alpha_1,\alpha_2,\alpha_3,\beta_1,\beta_2 \mid [\alpha_1,\alpha_2]=\beta_1,[\alpha_2,\alpha_3]=\beta_2,$ \\
$\alpha_3^{p^s}=\beta_1,\alpha_1^{p^s}=\alpha_2^{p^s}=\beta_1^{p^s}=\beta_2^{p^t}=1\rangle$}
& $(\Z_{p^t})^2 \times \Z_{p^{s-t}} \times \Z_{p^s}$\\
 \hline
    
  \makecell[l]{$\langle \alpha_1,\alpha_2,\alpha_3,\beta_1,\beta_2,\beta_3 \mid [\alpha_1,\alpha_3]=\beta_1,[\alpha_2,\alpha_3]=\beta_2,$ \\
$[\alpha_1,\alpha_2]=\beta_3,\alpha_1^{p^s}=\beta_1,\alpha_3^{p^s}=\beta_2,\alpha_2^{p^s}=\beta_1^{p^s}$ \\ 
$=\beta_2^{p^t}=\beta_3^{p^s}=1\rangle$}
& $\Z_{p^t} \times \Z_{p^{s-t}} \times (\Z_{p^s})^2$\\
 \hline

   \makecell[l]{$\langle \alpha_i,\beta_i,1 \le i \le 4 \mid [\alpha_1,\alpha_2]=\beta_1,[\alpha_2,\alpha_3]=\beta_2,$\\
$[\alpha_3,\alpha_4]=\beta_3,[\alpha_1,\alpha_4]=\beta_4,\alpha_1^{p^s}=\beta_1,\alpha_2^{p^s}=\beta_2,$ \\ 
$\alpha_3^{p^s}=\beta_3,\alpha_4^{p^s}=\beta_4,\beta_1^{p^{t_1}}=\beta_2^{p^{t_2}}=\beta_3^{p^s}=\beta_4^{p^s}=1\rangle$\\
for $t_1 \le t_2$}
& $\Z_{p^{s-t_1}} \times \Z_{p^{s-t_2}} \times (\Z_{p^{s+t_1}})^2$\\ 
\hline
 \makecell[l]{$\langle \alpha_1,\alpha_2,\alpha_3,\beta \mid [\alpha_1,\alpha_3]=\beta=[\alpha_1,\alpha_2],$  \\
    $\alpha_2^{p^s}=\beta^{p^t}=\alpha_3^{p^s},\alpha_1^{p^s}=\beta^{p^{2t}}=1\rangle$}
    & $\Z_{p^{s-2t}} \times (\Z_{p^t})^2 \times (\Z_{p^s})^2$   \\
    \hline

\end{tabular}}
\]

\section{The structure and the Schur multiplier of $s$-extraspecial $p$-groups} \label{s-extraspecial structure}
Recall that $s$-special $p$-groups of rank $k$ have $G/G'$ and $G'$ to be $s$-elementary abelian. Thus, Theorem \ref{maintheorem Schur computation} can be used to compute their Schur multiplier. In this section we obtain the Schur multiplier of $s$-extraspecial $p$-groups explicitly by proving a structural result for such groups.


The following result is a corollary of McCoy's theorem \cite{McCo1942} and will be useful in obtaining the structural result about $s$-extraspecial $p$-groups.
\begin{theorem}[Chapter I, Corollary I. 30 of \cite{McDo1984}] \label{McCoy theorem}
    Let $A$ be an $n \times n $ matrix over a commutative ring $R$. The system of linear equations $Ax=0$ has a non-trivial solution if and only if det$(A)$ is a zero divisor in R.
\end{theorem}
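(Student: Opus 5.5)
The plan is to prove the two implications separately. The forward direction uses the adjugate identity. The converse uses McCoy's rank argument with determinantal ideals. Throughout, ``zero divisor'' includes $0$, i.e.\ it means an element $r$ with $rc=0$ for some $c\neq 0$ in $R$; we assume $R\neq 0$.

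\emph{($\Rightarrow$).} Suppose $Ax=0$ with $x=(x_1,\ldots,x_n)^T\neq 0$. Multiplying on the left by the adjugate and using $\operatorname{adj}(A)A=\det(A)I_n$, which holds over any commutative ring, gives $\det(A)x_i=0$ for every $i$. Choosing $i$ with $x_i\neq 0$ shows that $\det(A)$ is a zero divisor.

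\emph{($\Leftarrow$).} For $0\le t\le n$, let $I_t$ be the ideal generated by all $t\times t$ minors of $A$, with $I_0=R$. Its annihilator is $\operatorname{ann}(I_t)=\{b\in R\mid bI_t=0\}$. Since $\operatorname{ann}(I_0)=0$ and $\operatorname{ann}(I_n)=\operatorname{ann}(\det A)\neq 0$ by hypothesis, there is a largest $t$ with $0\le t<n$ and $\operatorname{ann}(I_t)=0$. Then $\operatorname{ann}(I_{t+1})\neq 0$, so we can pick $b\neq 0$ with $bI_{t+1}=0$. By the choice of $t$, $b$ does not annihilate $I_t$, so some $t\times t$ minor $\Delta$, say on rows $r_1,\ldots,r_t$ and columns $c_1,\ldots,c_t$, satisfies $b\Delta\neq 0$. (If $t=0$, then $\Delta=1$ and $b$ kills every entry of $A$, so $x=be_1$ already works.)

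\emph{Construction of $x$.} Since $t<n$, there is a column $c_{t+1}$ not among $c_1,\ldots,c_t$. Consider the $t\times(t+1)$ submatrix $B$ of $A$ on rows $r_1,\ldots,r_t$ and columns $c_1,\ldots,c_{t+1}$. Define $x$ by setting $x_{c_\ell}=b\,(-1)^{\ell}\det B_{\hat\ell}$ for $1\le\ell\le t+1$, where $B_{\hat\ell}$ is $B$ with column $\ell$ deleted, and $x_c=0$ for all other columns $c$.

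\emph{Verification.} For any row index $i$, the $i$-th entry of $Ax$ equals, up to a global sign, $b$ times the determinant of the $(t+1)\times(t+1)$ matrix whose first row is row $i$ of $A$ restricted to columns $c_1,\ldots,c_{t+1}$ and whose remaining rows are those of $B$. This is the Laplace expansion along the first row.
\begin{itemize}
\item If $i\in\{r_1,\ldots,r_t\}$, the matrix has a repeated row, so the determinant is $0$.
\item Otherwise the determinant is, up to sign, a $(t+1)\times(t+1)$ minor of $A$, hence lies in $I_{t+1}$ and is killed by $b$.
\end{itemize}
Thus $Ax=0$. Finally, $x_{c_{t+1}}=\pm b\Delta\neq 0$, so $x$ is a nontrivial solution.

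The main obstacle is the converse. The key idea there is the choice of $t$ through annihilators of determinantal ideals, which replaces the field-theoretic notion of rank. The only delicate checks are the sign bookkeeping in the Laplace expansion and the degenerate case $t=0$.
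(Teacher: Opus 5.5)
Your proof is correct: the adjugate identity settles the forward direction, and the converse is McCoy's argument. You take $t$ maximal with $\mathrm{ann}(I_t)=0$, then build $x$ from signed $t\times t$ minors multiplied by an element $b\neq 0$ that kills $I_{t+1}$. The paper gives no proof of its own and only cites McDonald's Corollary I.30, which follows from McCoy's rank theorem by essentially this same argument, so your route coincides with the cited source.
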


The following proof generalizes the arguments given in the proof of \cite[Theorem 3.3.4]{Karp1987}.

\noindent \textbf{Proof of Theorem \ref{Centralproductstructure}.}
    The case $|G| =p^{3s}$ is trivial. Now we assume that ${|G|  > p^{3s}}$. Since $G$ is $s$-extraspecial, we can find generators $\{g_1,\ldots,g_d\}$ such that $g_iG'$ has order $p^s$ for $i \in \{1,\ldots,d\}$ and ${G/G' \cong \langle g_1G' \rangle \times \ldots \times \langle g_dG'\rangle}$. 
    Without loss of generality, $g_i$ can be chosen such that $w:=[g_1,g_2]$ generates $G'=Z(G)$ and ${[g_1,g_i]=[g_2,g_i]=1}$ for ${i \in \{3,\ldots,d\}}$.
    We have that ${\langle w \rangle = Z(G)= G' \cong \mathbb{Z}_{p^s}}$. Let $G_1=\langle g_1,g_2 \rangle$. We define $K$ to be the centralizer of $G_1$ in $G$.
    We now show that $G_1$ is an $s$-extraspecial $p$-group of order $p^{3s}$. 
    We have  ${G_1'=G' = \langle w \rangle}$ and hence, 
    ${G_1/G_1'=\langle g_1G',g_2G'\rangle \cong \mathbb{Z}_{p^s} \times \mathbb{Z}_{p^s}}$.
    Therefore, $|G_1|=p^{3s}$, ${G_1' \cong \mathbb{Z}_{p^s}}$, and ${G_1/G_1' \cong \mathbb{Z}_{p^s} \times \mathbb{Z}_{p^s}}$. We are left to show that ${Z(G_1) \cong G_1'}$. Note that any element of $G_1$ can be written as $g_1^ag_2^bw^c$.
    If $g_1^ag_2^bw^c \in Z(G_1)$, then $[g_1^ag_2^bw^c,g_1]=1$ implies $[g_2,g_1]^b =1$. Thus ${b \equiv 0 }$ mod ${p^s}$. Similarly, $[g_1^ag_2^bw^c,g_2]=1$ implies $a \equiv 0$ mod $p^s$. Hence, $Z(G_1)  \subseteq \langle w \rangle = G_1'$. Therefore, $G_1$ is an $s$-extraspecial $p$-group. Note that since $Z(G)=G'$, we have that ${[G,G_1] \subseteq G' =Z(G)=Z(G_1)}$. 
    
    Next, we show that $G=G_1K$, i.e., for any $g \in G$, there exists $h \in G_1$ such that $gh^{-1}$ centralizes $G_1$. To prove this, we consider the inner automorphism $i_g:G \to G$. We aim to produce an $h \in G_1$ such that $i_g=i_h$ on $G_1$. 
    Since $[G,G_1] \subseteq Z(G_1)$,  $i_g$ acts trivially on $G_1/Z(G_1)$.
    To produce our required $h \in G_1$, we show that any automorphism $\psi$ of $G_1$ that acts trivially on $G_1/Z(G_1)$ is an inner automorphism of $G_1$. We have $G_1/Z(G_1) \cong \langle g_1 \rangle \times \langle g_2 \rangle$. Note that, for $i$ with $1 \leq i \leq 2$, ${\psi(g_i)=g_iw^{m_i}}$ for some $m_i$ with $0 \leq m_i < p^s$. Thus, there are at most $p^{2s}$ distinct automorphisms of $G_1$ which act trivially on $G_1/Z(G_1)$. We show that there are $p^{2s}$ distinct inner automorphisms of $G_1$ that act trivially on $G_1/Z(G_1)$. Towards this end, we consider a transversal $\{t_j \mid 1 \leq j \leq p^{2s}\}$ for $Z(G_1)$ in $G_1$. The corresponding inner automorphisms $\psi_{t_j}$ of $G_1$  leave each coset of $Z(G_1)$ invariant. 
Therefore, $\psi_{t_j}$ acts trivially on $G_1/Z(G_1)$. 
It is easy to note that $\psi_{t_j}$ are distinct. Therefore, we have that $G=G_1K$.

    
 We have  ${\langle w \rangle \subseteq G_1 \cap K \subseteq Z(K) =Z(G) = \langle w \rangle}$. Hence, ${G_1 \cap K =Z(G)}$. 
    Now, it remains to show that $K$ is an $s$-extraspecial $p$-group. If $K \subseteq G_1$, then $G=G_1K=G_1$ and $|G|=p^{3s}$,  a contradiction. Hence, $K \not\subseteq G_1$. Note that $K$ is non-abelian since $K$ being abelian implies that $K= Z(K) \subseteq G_1$.
    Observe that
    $\langle g_3G',\ldots,g_dG' \rangle \subseteq K/G'$, and hence $|K/G'|\geq p^{(d-2)s}$. Since 
    $G_1 \cap K =G'$, we have that $G/G'=G_1/G' \oplus K/G'$. Therefore,  $K/G'$ is an $s$-elementary abelian group.
    To show that $K$ is an $s$-extraspecial $p$-group, it is enough to show that $K'=Z(K)$.
    We have ${K' \subseteq G' =Z(K)=\langle w \rangle}$. If $K' \subsetneq Z(K)$, we show that there exists an element $g_3^{b_3}\ldots g_d^{b_d} \in Z(G)\setminus G'$, where $b_i \in \{0,\ldots,p^s-1\}$ such that ${b=[b_3, \ldots, b_d]^T \neq [0]}$. This would be a contradiction because $G$ is $s$-extraspecial. For all $i,j$ with $3 \leq i,j \leq d$, and $i\neq j$, $[g_i,g_j]=[g_1,g_2]^{a_{ij}}$ where ${a_{ij} \in \{0,\ldots,p^s-1\}}$. If for some $i,j$, $(a_{ij},p) = 1$, then $K'=G'$ and therefore, we assume that $(a_{ij},p) > 1$ for all $i,j$. 
    Note that for all $k$ with $3 \leq k \leq d$, $[g_3^{b_3}\ldots g_d^{b_d},g_k]=1$ if and only if $\sum \limits _{i=3}^{d}b_ia_{ik} \equiv 0$ mod $p^s$. This is equivalent to saying that $Ab=0$ mod $p^s$. Since $(a_{ij},p)>1$ for all $3 \leq i,j \leq d$, $p \mid \text{det}(A)$ and is therefore a zero divisor in $\mathbb{Z}_{p^s}$. Hence, Theorem~\ref{McCoy theorem} gives a non-trivial solution $b$ such that $Ab=0$ mod $p^s$. Thus, we have $g_3^{b_3}\ldots g_d^{b_d} \in Z(G) \setminus G'$. 
    Therefore, we have $K'=Z(K)$. By induction, $K$ is the central product of $s$-extraspecial subgroups $G_i$ of order $p^{3s}$ with $2 \leq i \leq r$ and that $|K|=p^{2(r-1)s+s}=p^{2rs-s}$. Thus, $G$ is the central product of $G_1,\ldots,G_r$ with $G_i \cap G_j=Z(G)$ and $|G|=p^{2rs+s}$.

    Now, by \cite[Theorem A]{HatVerYad2018}, it follows that $M(G)$ is an $s$-elementary abelian $p$-group of order $p^{(2r^2-r-1)s}$.
    \qed

Using the structural result \cite[Theorem 3.3.4]{Karp1987} for extraspecial $p$-groups, it can be shown that extraspecial $p$-groups of order $p^{2r+1}$, with ${r \geq 2}$ are not capable groups (cf~\cite[Theorem 3.3.6]{Karp1987}). As a corollary to Theorem \ref{Centralproductstructure}, a similar result can be obtained for $s$-extraspecial $p$-groups. The proof follows \textit{mutatis mutandis} the proof given in \cite{Karp1987}. We give an alternative proof here.
\begin{corollary} \label{unicentral}
    Let $G$ be an $s$-extraspecial $p$-group of order $p^{(2r+1)s}$, with ${r \geq 2}$. Then $G$ is unicentral. Consequently, $G$ is not a capable group.
\end{corollary}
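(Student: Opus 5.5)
We have to show $Z^*(G)=Z(G)$. Since $G$ is $s$-extraspecial, $Z(G)=G'$, and $G$ has class $2$, so Lemma~\ref{capability 2} applies with $Z=Z(G)$: it suffices to prove $(G/G')\otimes Z(G)\subseteq \ker\lambda$, i.e.\ $V\otimes W\subseteq \ker\lambda$. By Remark~\ref{kernel=X}, $\ker\lambda=X$. So the whole statement reduces to the identity $X=V\otimes W$. Non-capability then follows at once: $Z^*(G)=Z(G)\neq 1$, while a capable group must have trivial epicenter.

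By Theorem~\ref{Centralproductstructure}, $G$ is a central product of $G_1,\dots,G_r$ of order $p^{3s}$ amalgamating $Z(G)=\langle w\rangle\cong\mathbb{Z}_{p^s}$. Choose generators $x_i,y_i$ of $G_i$ with $[x_i,y_i]=w$, so $V$ has basis $\{x_iG',y_iG'\}_{i=1}^r$. Generators from different factors commute. Since $W=\langle w\rangle$, the group $V\otimes W$ is generated by $u\otimes w$ with $u$ in this basis. It therefore suffices to show that each $u\otimes w$ lies in $X_1\subseteq X$.

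Take $u=x_1G'$ and use $r\ge 2$. The element
\[
(x_1,x_2,y_2)=x_1G'\otimes[x_2,y_2]+x_2G'\otimes[y_2,x_1]+y_2G'\otimes[x_1,x_2]
\]
equals $x_1G'\otimes w$, since $[y_2,x_1]=[x_1,x_2]=1$. Hence $x_1G'\otimes w\in X_1$. The same computation with $y_1$ in place of $x_1$, or with the roles of the indices $1$ and $2$ swapped, handles every basis element $u$, always pairing $u$ with a factor $G_j$ other than its own, which exists because $r\ge2$. Thus $V\otimes W=X_1\subseteq X=\ker\lambda$, and Lemma~\ref{capability 2} gives $Z(G)\subseteq Z^*(G)$. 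Since $Z^*(G)\subseteq Z(G)$ always holds, $G$ is unicentral.

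The main step is the reduction $\ker\lambda=X$ from Remark~\ref{kernel=X}, and this is the only place where the hypothesis that $G/G'$ is $s$-elementary abelian enters. After that, the Hall--Witt-type elements of $X_1$ do all the work. One could alternatively verify the result through the orders, using $|M(G)|$ from Theorem~\ref{Centralproductstructure} and the exactness of \eqref{exactseq X}, but the direct generator argument above is simpler.
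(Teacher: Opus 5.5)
Your proof is correct, but it reaches the key identity $X = V\otimes W$ by a different route from the paper. The paper argues by counting. It takes $|M(G)| = p^{(2r^2-r-1)s}$ from Theorem~\ref{Centralproductstructure}, which itself rests on \cite[Theorem A]{HatVerYad2018}. It inserts this into Corollary~\ref{Schurmultiplier_final} together with $|V\otimes W| = p^{2rs}$, $|V\wedge V| = p^{r(2r-1)s}$ and $|W| = p^{s}$. That gives $|X| = p^{2rs}$, hence $X = V\otimes W$, and the paper then concludes via Remark~\ref{kernel=X} and Lemma~\ref{capability 2} exactly as you do. You instead use only the structural part of Theorem~\ref{Centralproductstructure}, writing each generator $u\otimes w$ as a Jacobi element $(u,x_j,y_j)$ with $j$ different from the factor containing $u$.

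This buys you two things. First, you never need the order of $M(G)$, so you avoid the external result. Second, you only need the easy inclusion $X_1\subseteq\ker\lambda$, which is the Hall--Witt computation in Step~1 of the proof of Theorem~\ref{maintheorem Schur computation} and holds for every finite group of class $2$. The reverse inclusion $\ker\lambda\subseteq X$ recorded in Remark~\ref{kernel=X} is never used. Your closing comment therefore slightly misplaces where the hypotheses enter: the $s$-elementary assumption is used through the central product decomposition, not through $\ker\lambda = X$.

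Conversely, feeding your identity $X = V\otimes W$ into Corollary~\ref{Schurmultiplier_final} gives $|M(G)| = |V\wedge V|/|W| = p^{(2r^2-r-1)s}$. So your argument independently recovers the order statement of Theorem~\ref{Centralproductstructure}. The paper's route is shorter only because it takes that order as already established.
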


\begin{proof}
By Theorem \ref{Centralproductstructure}, $|M(G)|=p^{(2r^2-r-1)s}$. Since $|W|=p^s$ and $|V|=p^{2rs}$, we have that $|V \otimes W|=p^{2rs}$ and $|V \wedge V|=p^{r(2r-1)s}$. Using Corollary \ref{Schurmultiplier_final}, we obtain that $|X|=p^{2rs}$ and $ X=G/G' \otimes G'$.  Hence $Z^*(G)=Z(G)$, by Remark \ref{kernel=X} and Lemma \ref{capability 2}.

\end{proof}

\section*{Acknowledgements} 
The authors acknowledge the support of the National Institute of Science Education and Research (NISER), Bhubaneswar and Homi Bhaba National Institute (HBNI), Mumbai.

\bibliographystyle{amsplain}
\bibliography{DP_refschursize}

\end{document}